\newtheorem{theorem}{Theorem}[section]
\newtheorem{lemma}[theorem]{Lemma}
\newtheorem{prop}[theorem]{Proposition}
\theoremstyle{definition}
\newtheorem{definition}[theorem]{Definition}
\theoremstyle{remark}
\newtheorem{remark}[theorem]{Remark}
\numberwithin{equation}{section}
\newcommand{\R}{\mathbb R}
\newcommand{\Z}{\mathbb Z}
\newcommand{\subRn}{{{\mathbb R}^n}}
\DeclareMathOperator{\supp}{supp}
\DeclareMathOperator*{\essinf}{ess\,inf}
\DeclareMathOperator*{\esssup}{ess\,sup}
\newcommand{\pp}{{p(\cdot)}}
\newcommand{\cpp}{{p'(\cdot)}}
\newcommand{\Lp}{L^{p(\cdot)}}
\newcommand{\Pp}{\mathcal P}
\newcommand{\qq}{{q(\cdot)}}
\newcommand{\Hp}{H^{p(\cdot)}}
\newcommand{\Ss}{\mathcal{S}}
\newcommand{\Mg}{\mathcal{M}_N}
\newcommand{\TMg}{\mathcal{M}_N^{\epsilon, L}}
\newcommand{\Mt}{M_{\Phi,T}}
\newcommand{\TMt}{M_{\Phi, T}^{\epsilon, L}}
\newcommand{\Mone}{M_{\Phi, 1}}
\newcommand{\TMone}{M_{\Phi, 1}^{\epsilon, L}}
\newcommand{\Mr}{M_{\Phi, 0}}
\newcommand{\TMr}{M_{\Phi, 0}^{\epsilon, L}}
\newcommand{\Mp}{\mathcal{N}}
\newcommand{\ds}{\displaystyle}
\newcommand{\p}{\partial}
\newcommand{\MP}{M\mathcal{P}_0}
\def\Xint#1{\mathchoice
   {\XXint\displaystyle\textstyle{#1}}%
   {\XXint\textstyle\scriptstyle{#1}}%
   {\XXint\scriptstyle\scriptscriptstyle{#1}}%
   {\XXint\scriptscriptstyle\scriptscriptstyle{#1}}%
   \!\int}
\def\XXint#1#2#3{{\setbox0=\hbox{$#1{#2#3}{\int}$}
     \vcenter{\hbox{$#2#3$}}\kern-.5\wd0}}
\def\avgint{\Xint-}
\begin{document}

\title{Variable Hardy Spaces}

\author{David Cruz-Uribe, SFO}
\address{Department of Mathematics, Trinity College}
\email{David.CruzUribe@trincoll.edu}

\author{Li-An Daniel Wang}
\address{Department of Mathematics, Trinity College}
\email{Daniel.Wang@trincoll.edu}

\thanks{The first author is supported by the Stewart-Dorwart faculty
  development fund at Trinity College and by
  grant MTM2012-30748 from the Spanish Ministry of Science and
  Innovation}

\subjclass[2010]{42B25, 42B30, 42B35}

\keywords{Hardy spaces, variable Lebesgue spaces, grand maximal
  operator, atomic decomposition, singular integral operators}

\date{November 15, 2012}

\begin{abstract}
We develop the theory of variable exponent Hardy spaces $H^\pp$.
Analogous to the classical theory,   we give equivalent definitions in
terms of maximal operators.  We also show that $H^\pp$ functions have an atomic decomposition including a
``finite'' decomposition; this decomposition is more like the
decomposition for weighted Hardy spaces due to Str\"omberg and
Torchinsky~\cite{MR1011673} than the classical atomic decomposition.  As an application
of the atomic decomposition we show that singular integral operators are
bounded on $H^\pp$ with minimal regularity assumptions on the exponent
$\pp$. 
\end{abstract}

\maketitle

\section{Introduction}

Variable Lebesgue spaces are a generalization of the classical $L^p$
spaces, replacing the constant exponent $p$ with an exponent function
$\pp$:  intuitively, they consist of all functions $f$ such that
\[ \int_\subRn |f(x)|^{p(x)}\,dx < \infty.  \]
These spaces were introduced by Orlicz~\cite{0003.25203} in 1931, but
they have been the subject of more intensive study since the early 1990s,
because of their  intrinsic interest, for their use in the study of
PDEs and variational integrals with nonstandard growth conditions, and
for their applications to the study of non-Newtonian fluids and to
image restoration.
(See~\cite{cruz-fiorenza-book,diening-harjulehto-hasto-ruzicka2010}
and the references they contain.)

In this paper we extend the theory of variable Lebesgue spaces by
studying the variable exponent Hardy spaces, or more simply, the
variable Hardy spaces $H^\pp$.  The classical theory of $H^p$ spaces,
$0<p\leq 1$, is well-known
(see~\cite{MR0447953,garcia-cuerva-rubiodefrancia85, grafakos08b,MR1342077,stein93})
and our goal is to replicate that theory as much as possible in this
more general setting.  This has been done in the context of analytic
functions on the unit disk by Kokilashvili and
Paatashvili~\cite{MR2294576,MR2464049}.  We are interested in the
theory of real Hardy spaces in all dimensions.  Here we give a broad
overview of our techniques and results; we will defer the precise
statement of definitions and theorems until the body of the paper.

Given an exponent function $\pp : \R^n \rightarrow (0,\infty)$, we
define the space $\Lp$;  this is a quasi-Banach space.   In the study
of variable Lebesgue spaces it is common to assume that the exponent
$\pp$ satisfies log-H\"older continuity conditions locally and at
infinity.  While these conditions will be sufficient for us, we prefer
to work with a much weaker hypothesis:  that there exists $p_0>0$ such
that the Hardy-Littlewood maximal operator is bounded on
$L^{\pp/p_0}$.   This approach was first introduced
in~\cite{MR2210118} and developed systematically
in~\cite{cruz-fiorenza-book}.  While in certain cases  weaker
hypotheses are possible, this appears to be the ``right'' universal
condition for doing harmonic analysis in the variable exponent setting.

The variable Hardy space  $H^\pp$ consists of all tempered
distributions $f$ such that $\Mg f\in \Lp$, where $\Mg$ is the grand
maximal operator of Fefferman and Stein.   We show that an equivalent
definition is gotten by replacing the grand maximal operator with a
maximal operator defined in terms of convolution with a single
Schwartz function or with the non-tangential maximal operator defined
using the Poisson kernel.    This proof follows the broad outline of
the argument in the classical case, but differs in many technical
details.  Here we make repeated use of the fact that the maximal
operator is bounded on $L^{\pp/p_0}$.

We next prove an atomic decomposition for distributions in $H^\pp$.
Given $\pp$ and $q$, $1<q\leq \infty$, we say that a function  $a(\cdot)$ is a
$(\pp,q)$ atom if there is a ball $B$ such that $\supp(a)\subset B$,
\[ \|a\|_q \leq |B|^{1/q}\|\chi_B\|^{-1}_\pp, \qquad 
\text{and }
\qquad  \int a(x)x^\alpha\,dx =0 \]
for all multi-indices $\alpha$ such that $|\alpha|$ is not too large.
We then show that $f\in H^\pp$ if and only if for $q$ sufficiently
large  there exist $(\pp,q)$ atoms $a_j$
such that 
\begin{equation} \label{eqn:intro1}
f = \sum_j \lambda_j a_j,
\end{equation}
and
\begin{equation} \label{eqn:intro2}
 \|f\|_{H^\pp} \approx \inf\left\{ 
\bigg\| \sum_j \lambda_j \frac{\chi_{B_j}}{\|\chi_{B_j}\|_\pp}
  \bigg\|_\pp : f = \sum_j \lambda_j a_j \right\},
\end{equation}
where the infimum is taken over all possible atomic decompositions of
$f$. 
This is very different from the classical atomic decomposition;
it is based on the atomic decomposition developed for
weighted Hardy spaces by Str\"omberg and Torchinsky~\cite{MR1011673}.
A comparable decomposition in the classical case is due to Uchiyama:
see Janson and Jones~\cite{MR671315}.  Moreover, we are able to prove
that for $q<\infty$, if the summation in~\eqref{eqn:intro1} is finite, the infimum in
\eqref{eqn:intro2} can be taken over finite decompositions.  This
``finite'' atomic decomposition is a generalization of the result of
Meda, {\em et al.}~\cite{MR2399059} in the classical case.   As part
of our work we also prove a finite atomic decomposition theorem for weighted Hardy
spaces, extending the results in~\cite{MR1011673}.

To construct our atomic decomposition we first adapt the 
Calder\'on-Zygmund decomposition of classical Hardy spaces to give a
$(\pp,\infty)$ atomic decomposition.  Here a key tool is a
vector-valued inequality for the maximal operator, which in turn
depends on the boundedness of maximal operator.   For the case $q<\infty$ we
also rely on the theory of weighted Hardy spaces and on the Rubio
de Francia extrapolation theory
for variable Lebesgue spaces developed in~\cite{MR2210118} (see
also~\cite{cruz-fiorenza-book,cruz-martell-perezBook}).  Neither
approach was sufficient in itself in this case. We were not able to extend the
classical approach to prove half the equivalence in~\eqref{eqn:intro2}.  On
the other hand, while such an equivalence exists in the weighted case,
extrapolation requires careful density arguments and we could not,
{\em a priori}, find the requisite dense subsets needed to prove both
inequalities in~\eqref{eqn:intro2}.   Again, in applying
extrapolation the key hypothesis is the boundedness assumption on the
maximal operator.  

Finally, we prove that convolution type Calder\'on-Zygmund singular integral operators
with sufficiently smooth kernels are bounded on $H^\pp$.  In our proof
we make extensive use of the finite atomic decomposition in weighted
Hardy spaces; this allows
us to avoid the more delicate convergence arguments that are often
necessary when using the ``infinite'' atomic decomposition (e.g.,
see~\cite{garcia-cuerva-rubiodefrancia85}). 

The remainder of this paper is organized as follows.  In
Section~\ref{section:prelim} we give precise definitions of variable
Lebesgue spaces and state a number of results we will need in the
subsequent sections.   In Section~\ref{section:maximal} we
characterize $H^\pp$ in terms of maximal operators.  In
Sections~\ref{section:density} and~\ref{section:CZ} we prove two
technical results:  that $L^1_{loc}$ is dense in $H^\pp$ and the
Calder\'on-Zygmund decomposition for distributions in $H^\pp$.  In
Section~\ref{section:p-infty-atoms} we construct the $(\pp,\infty)$
atomic decomposition, 
and in Section~\ref{section:pq-atoms} we
construct the atomic decomposition for $q<\infty$ and prove the finite
atomic decompositions for both the variable and weighed Hardy spaces. 
This second decomposition is used in Section~\ref{section:sio}, where 
we prove that singular integrals are bounded on $H^\pp$.  

\begin{remark}
As we were completing this project we learned that the variable Hardy
spaces had been developed independently by Nakai and
Sawano~\cite{Nakai20123665}.   They prove the equivalent definitions
in terms of maximal operators using another approach.  They also
define an atomic decomposition but one which is weaker than ours.
They show that $\|f\|_{H^\pp}$ is equivalent to the infimum of 
\[ \bigg\| \bigg(\sum_j \bigg(\lambda_j^{p_*}
\frac{\chi_{B_j}}{\|\chi_{B_j}\|_\pp^{p_*}}\bigg)\bigg)^{1/p_*} 
  \bigg\|_\pp, \]
where $p_*=\min(1, \essinf p(x) )$.  In particular, if $\pp$ takes on
values less than 1, this quantity is larger than that in
\eqref{eqn:intro2}.  They prove that this is equivalent
to~\eqref{eqn:intro2} only when $q=\infty$ and with the further assumption that $\pp$ is
log-H\"older continuous.   Using their atomic decomposition they prove
that singular integrals are bounded, but again they must assume that $\pp$ is
log-H\"older continuous.
\end{remark}

\section{Preliminaries}
\label{section:prelim}

In this section we give without proof some basic results about the variable Lebesgue
spaces.  Unless otherwise specified, we refer the
reader to~\cite{cruz-fiorenza-book,dcu-af-crm,diening-harjulehto-hasto-ruzicka2010,
MR1866056,MR1134951} for proofs and further information.
Let $\Pp=\Pp(\R^n)$ denote the collection of all measurable functions $\pp :
\R^n \rightarrow [1,\infty]$.   Given a measurable set $E$, let
\[ p_-(E) = \essinf_{x\in E} p(x), \qquad p_+(E) = \esssup_{x\in E}
p(x). \]
For brevity we will write $p_-=p_-(\R^n)$ and $p_+=p_+(\R^n)$.   
Define the set $\Omega_\infty = \{ x\in \R^n : p(x)=\infty \}$.
Then for $\pp\in \Pp$, the space $\Lp=\Lp(\R^n)$ is the collection of
all measurable functions $f$ such that for some $\lambda>0$,
\[ \rho(f/\lambda) = \int_{\R^n \setminus \Omega_\infty}
\left(\frac{|f(x)|}{\lambda}\right)^{p(x)}\,dx + \lambda^{-1}\|f\|_{L^\infty(\Omega_\infty)}< \infty.  \]
This becomes a Banach function space when equipped with the Luxemburg
norm
\[ \|f\|_\pp = \inf\left\{ \lambda > 0 : \rho(f/\lambda)\leq 1
\right\}. \]

Given $\pp\in \Pp$, define the conjugate exponent $\cpp$ by the
equation
\[ \frac{1}{p(x)}+\frac{1}{p'(x)} = 1, \]
with the convention that $1/\infty = 0$.

\begin{lemma} \label{lemma:holder}
Given $\pp \in \Pp$, if $f\in \Lp$ and $g\in L^\cpp$,
\[ \int_\subRn |f(x)g(x)|\,dx \leq C(\pp) \|f\|_\pp\|g\|_\cpp. \]
Conversely for all $f\in \Lp$,
\[ \|f\|_\pp \leq C(\pp)\sup \int_\subRn f(x)g(x)\,dx, \]
where the supremum is taken over all $g\in L^\cpp$ such that
$\|g\|_\cpp \leq 1$. 
\end{lemma}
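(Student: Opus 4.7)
The plan is to derive both bounds from a pointwise Young inequality together with an explicit extremizing test function for the converse, adapting the classical duality argument to the variable exponent setting.

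For the forward Hölder inequality, I would first homogenize by assuming $\|f\|_\pp = \|g\|_\cpp = 1$; the definition of the Luxemburg norm then forces the modular of each function to be at most $1$, including the $L^\infty$ contributions on $\Omega_\infty$ and on the conjugate set $\{x : p(x) = 1\}$. On the portion of $\subRn$ where $1 < p(x) < \infty$, I apply the pointwise Young inequality
\[
|f(x) g(x)| \leq \frac{|f(x)|^{p(x)}}{p(x)} + \frac{|g(x)|^{p'(x)}}{p'(x)}
\]
with $|f(x)|$ and $|g(x)|$ in place of $a$ and $b$, and on the remaining sets I use the trivial bound $|fg| \leq |f|\,\|g\|_{L^\infty}$ or its symmetric form. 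Integrating and assembling the pieces gives a constant depending only on $p_-$ and the structure of $\Omega_\infty$.

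For the converse, normalize $\|f\|_\pp = 1$ and take the test function
\[
g(x) = |f(x)|^{p(x)-1}\sgn f(x)
\]
on $\subRn \setminus \Omega_\infty$, completed on $\Omega_\infty$ so that $fg \geq 0$ pointwise. A direct calculation gives $|g(x)|^{p'(x)} = |f(x)|^{p(x)}$ on the finite part, so the modular of $g$ in $L^\cpp$ coincides with the modular of $f$ in $\Lp$; in particular $\|g\|_\cpp$ is uniformly bounded by a constant depending on $\pp$. At the same time,
\[
\int_\subRn f(x) g(x)\,dx \;\geq\; \int_{\subRn \setminus \Omega_\infty} |f(x)|^{p(x)}\,dx,
\]
and the right-hand side is bounded below by a positive constant $c(\pp)$ whenever $\|f\|_\pp = 1$. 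Renormalizing $g$ by its $L^\cpp$ norm then produces an admissible test function realizing $\|f\|_\pp \leq C(\pp)\sup\int fg\,dx$.

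The main obstacle is the standard gap between modular and norm in the variable exponent setting: unlike the constant exponent case, $\|f\|_\pp = 1$ does not force $\rho(f) = 1$ in general, because of the $L^\infty$ piece carried by $\Omega_\infty$ and the absence of a clean scaling law. The careful bookkeeping amounts to splitting into the cases $\|f\|_\pp \leq 1$ and $\|f\|_\pp > 1$, using the two-sided comparison between $\rho(f)$ and $\|f\|_\pp^{p_\pm}$, and tracking the $\Omega_\infty$ contribution through every step. All of the resulting $p_-$ and $p_+$ exponents and the measure of $\Omega_\infty$ are absorbed into the single constant $C(\pp)$ appearing in the statement.
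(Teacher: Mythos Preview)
The paper does not prove this lemma: it is listed among the preliminary facts in Section~\ref{section:prelim} with the blanket statement ``we give without proof some basic results\ldots we refer the reader to~[the cited monographs] for proofs.'' So there is no paper proof to compare against; your outline is essentially the standard argument one finds in those references (e.g., Kov\'a\v{c}ik--R\'akosn\'{\i}k, Cruz-Uribe--Fiorenza, Diening et~al.).

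Your forward direction is fine. In the converse direction, however, there is a genuine gap. You assert that
\[
\int_{\subRn\setminus\Omega_\infty} |f(x)|^{p(x)}\,dx \;\geq\; c(\pp)
\]
whenever $\|f\|_\pp=1$, but this is false when $|\Omega_\infty|>0$: take any $f$ supported entirely in $\Omega_\infty$ with $\|f\|_{L^\infty(\Omega_\infty)}=1$; then $\|f\|_\pp=1$ while the left side is zero. Your phrase ``completed on $\Omega_\infty$ so that $fg\geq 0$'' does not repair this, since $g\equiv 0$ on $\Omega_\infty$ satisfies that condition and contributes nothing. The fix is to construct $g$ on $\Omega_\infty$ explicitly: pick a set $E\subset\Omega_\infty$ of small positive measure on which $|f|$ is within $\epsilon$ of its essential supremum, and put $g = |E|^{-1}\sgn(f)\chi_E$ there (so $g\in L^1(\Omega_\infty)=L^{\cpp}(\Omega_\infty)$ with norm $1$). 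Then $\int_{\Omega_\infty} fg \geq \|f\|_{L^\infty(\Omega_\infty)}-\epsilon$, and combining this with the contribution from $\subRn\setminus\Omega_\infty$ recovers a quantity comparable to the full modular $\rho(f)$. Only then can you invoke the norm--modular comparison to conclude. The last paragraph of your proposal acknowledges the $\Omega_\infty$ bookkeeping in general terms, but the displayed inequality you wrote down drops exactly the piece that matters.
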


\begin{lemma} \label{lemma:imbed}
  Let $E\subset \R^n$ be such that  $|E|<\infty$.  If
  $\pp,\,\qq\in\Pp$ satisfy $p(x)\leq q(x)$ a.e., then
\[ \|f\chi_E\|_\pp \leq (1+|E|)\|f\chi_E\|_\qq.
\]
\end{lemma}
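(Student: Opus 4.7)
I would proceed via a two-step modular argument, using the standard fact $\|g\|_\pp\le 1\iff \rho_\pp(g)\le 1$ (and similarly for $\qq$). By homogeneity of both sides, it suffices to handle the case $\|f\chi_E\|_\qq\le 1$ and prove $\|f\chi_E\|_\pp\le 1+|E|$; the general case then follows by rescaling $f$ by $\|f\chi_E\|_\qq^{-1}$. Write $\Omega_p=\{x:p(x)=\infty\}$ and $\Omega_q=\{x:q(x)=\infty\}$; since $p\le q$ we have $\Omega_p\subset\Omega_q$. The normalization gives $\rho_\qq(f\chi_E)\le 1$, hence $M:=\|f\chi_E\|_{L^\infty(\Omega_q)}\le 1$.

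The first step is to establish the modular comparison
\[
\rho_\pp(f\chi_E)\;\le\;\rho_\qq(f\chi_E)+|E|.
\]
Split $E\setminus\Omega_p$ into $E\setminus\Omega_q$ (where both exponents are finite) and $E\cap(\Omega_q\setminus\Omega_p)$ (where only $p$ is finite). On $E\setminus\Omega_q$, apply the elementary pointwise inequality $a^{p(x)}\le a^{q(x)}+1$ for $a\ge 0$, which holds since $p(x)\le q(x)$ (check $a\le 1$ and $a>1$ separately); this contributes $\int_{E\setminus\Omega_q}|f|^{q(x)}\,dx+|E\setminus\Omega_q|$. On $E\cap(\Omega_q\setminus\Omega_p)$, use $|f|\le M\le 1$ a.e.\ to conclude $|f|^{p(x)}\le 1$, contributing at most $|E\cap(\Omega_q\setminus\Omega_p)|$. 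The $L^\infty$ piece of $\rho_\pp$ is bounded by $\|f\chi_E\|_{L^\infty(\Omega_p)}\le M$. Summing and grouping $\int|f|^{q(x)}\,dx+M=\rho_\qq(f\chi_E)$ on one side yields the comparison, and then $\rho_\pp(f\chi_E)\le 1+|E|$.

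The second step is a Luxemburg rescaling. For any $c\ge 1$, the inequality $c^{p(x)}\ge c$ (using $p(x)\ge 1$) gives
\[
\rho_\pp(g/c)=\int_{\R^n\setminus\Omega_p}\frac{|g(x)|^{p(x)}}{c^{p(x)}}\,dx+c^{-1}\|g\|_{L^\infty(\Omega_p)}\;\le\; c^{-1}\rho_\pp(g).
\]
Applying this with $g=f\chi_E$ and $c=1+|E|$ gives $\rho_\pp\bigl(f\chi_E/(1+|E|)\bigr)\le (1+|E|)^{-1}(1+|E|)=1$, so $\|f\chi_E\|_\pp\le 1+|E|$ by the definition of the Luxemburg norm.

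The main obstacle is Step 1 — specifically, the intermediate region $E\cap(\Omega_q\setminus\Omega_p)$, on which $p$ is finite but $q$ is infinite, so the pointwise inequality $a^{p(x)}\le a^{q(x)}+1$ is not directly available. The resolution is to exploit the free-of-charge pointwise bound $|f|\le M\le 1$ on this region (supplied by $\rho_\qq(f\chi_E)\le 1$) so that $|f|^{p(x)}\le 1$ there, and to estimate the $L^\infty$ part of $\rho_\pp$ by $M$ itself rather than by $M^{p(x)}$, thereby preventing double-counting of the $L^\infty$ contribution already sitting inside $\rho_\qq(f\chi_E)$.
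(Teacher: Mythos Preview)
Your proof is correct. The paper does not prove this lemma; it is listed among the preliminary facts stated without proof and attributed to the standard references on variable Lebesgue spaces (e.g., Cruz-Uribe and Fiorenza). Your argument---normalize to $\|f\chi_E\|_\qq\le1$, use the pointwise inequality $a^{p(x)}\le a^{q(x)}+1$ together with a separate treatment of the region where $q(x)=\infty$, and then rescale via $c^{p(x)}\ge c$ for $c\ge1$---is exactly the standard proof found in those references, including the careful bookkeeping on $\Omega_q\setminus\Omega_p$.
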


To define the variable Hardy spaces we need to extend the collection
of allowable exponents.  For simplicity we restrict ourselves to spaces
where $\pp$ is bounded.
Let $\Pp_0$ denote the collection of all
measurable functions $\pp : \R^n \rightarrow (0,\infty)$ such that
$p_+<\infty$.   With the same definition of the modular $\rho$ as above, we
again define $\Lp$ as the collection of measurable functions $f$ such
that for some $\lambda>0$, $\rho(f/\lambda)<\infty$.   We define
$\|\cdot\|_\pp$ as before; if $p_-<1$ (the case we are primarily
interested in) this is not a norm:  it is a quasi-norm and
$\Lp$ becomes a quasi-Banach space.  We will abuse terminology and
refer to it as a norm.

The next four lemmas are proved exactly as in the case when $p_-\geq 1$.

\begin{lemma} \label{lemma:homog-exp}
Given $\pp\in \Pp_0$, $p_+<\infty$,
then for all $s>0$,
\[ \||f|^s\|_\pp = \|f\|_{s\pp}^s. \]
\end{lemma}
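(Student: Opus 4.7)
The plan is to verify the identity directly from the definition of the Luxemburg quasi-norm by making the substitution $\mu=\lambda^s$ in the defining infimum. Since $p_+<\infty$, the set $\Omega_\infty$ in the definition of $\rho$ is empty, so the modular reduces to $\rho(f)=\int_{\R^n}|f(x)|^{p(x)}\,dx$, which simplifies the bookkeeping.

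First I would write out both sides explicitly:
\[
\||f|^s\|_\pp = \inf\Bigl\{\mu>0 : \int_{\R^n}\Bigl(\frac{|f(x)|^s}{\mu}\Bigr)^{p(x)}\,dx\leq 1\Bigr\},
\]
\[
\|f\|_{s\pp} = \inf\Bigl\{\lambda>0 : \int_{\R^n}\Bigl(\frac{|f(x)|}{\lambda}\Bigr)^{s\,p(x)}\,dx\leq 1\Bigr\}.
\]
Next I would observe that for any $\mu>0$, setting $\lambda=\mu^{1/s}$, the identity $|f(x)|^s/\mu = (|f(x)|/\lambda)^s$ gives
\[
\Bigl(\frac{|f(x)|^s}{\mu}\Bigr)^{p(x)} = \Bigl(\frac{|f(x)|}{\lambda}\Bigr)^{s\,p(x)},
\]
so the two modular conditions are equivalent under the change $\mu\leftrightarrow\lambda^s$. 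Because $s>0$, the map $\lambda\mapsto\lambda^s$ is a strictly increasing bijection of $(0,\infty)$ onto itself, so the infimum over admissible $\mu$ equals the infimum over $\lambda^s$ with $\lambda$ admissible for the second expression. This yields $\||f|^s\|_\pp=(\|f\|_{s\pp})^s$, as desired.

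I do not expect any serious obstacle: the argument is essentially the same scalar manipulation as in the constant exponent case, and the only subtlety would have been the $L^\infty$ term in $\rho$, which is absent here because $p_+<\infty$. If $f$ fails to lie in $L^{s\pp}$ both sides are $+\infty$ by the same equivalence (no admissible $\lambda$ forces no admissible $\mu$), so the identity extends to that case as well.
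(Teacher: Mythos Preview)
Your proof is correct and is exactly the standard argument; the paper does not actually write out a proof for this lemma, stating only that it ``is proved exactly as in the case when $p_-\geq 1$,'' which is precisely the substitution $\mu=\lambda^s$ in the Luxemburg infimum that you carry out.
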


\begin{lemma} \label{lemma:norm-mod}
Suppose $\pp \in \Pp_0$.  Given a sequence
$\{f_k\}\subset \Lp$,
\[ \int_\subRn |f_k(x)|^{p(x)}\,dx \rightarrow 0 \]
as $k\rightarrow \infty$ if and only if $\|f_k\|_\pp\rightarrow 0$.
\end{lemma}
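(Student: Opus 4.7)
The plan is to deduce the lemma from the two standard inequalities that control the Luxemburg norm in terms of the modular, and vice versa, when one of the two is at most $1$. Writing $\rho(f)=\int_\subRn |f(x)|^{p(x)}\,dx$ (note $\Omega_\infty=\emptyset$ since $p_+<\infty$), the target inequalities are
\begin{equation*}
\|f\|_\pp \le 1 \ \Longrightarrow\ \rho(f) \le \|f\|_\pp^{p_-},
\qquad
\rho(f) \le 1 \ \Longrightarrow\ \|f\|_\pp \le \rho(f)^{1/p_+}.
\end{equation*}
With these in hand, both implications follow immediately: if $\|f_k\|_\pp\to 0$, then for large $k$ the first bound gives $\rho(f_k)\le \|f_k\|_\pp^{p_-}\to 0$; conversely if $\rho(f_k)\to 0$, then for large $k$ the second gives $\|f_k\|_\pp\le \rho(f_k)^{1/p_+}\to 0$.

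To establish the first inequality I would set $\lambda=\|f\|_\pp\in(0,1]$ (the case $\lambda=0$ forces $f=0$ a.e., so $\rho(f)=0$) and, for any $\lambda'\in(\lambda,1]$, use the defining property of the Luxemburg norm to write $\rho(f/\lambda')\le 1$. Combining this with the pointwise bound $(\lambda')^{p(x)}\le (\lambda')^{p_-}$, valid because $\lambda'\le 1$ and $p(x)\ge p_-$, yields
\begin{equation*}
\rho(f)=\int_\subRn (\lambda')^{p(x)}\bigl(|f(x)|/\lambda'\bigr)^{p(x)}\,dx \le (\lambda')^{p_-}\rho(f/\lambda')\le (\lambda')^{p_-},
\end{equation*}
and letting $\lambda'\downarrow\lambda$ gives $\rho(f)\le \|f\|_\pp^{p_-}$. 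For the second inequality I would set $\lambda=\rho(f)^{1/p_+}\le 1$ (with $f=0$ a.e.\ again dispatching the boundary case) and use $\lambda^{p(x)}\ge \lambda^{p_+}$, which holds since $\lambda\le 1$ and $p(x)\le p_+$, to compute $\rho(f/\lambda)\le \rho(f)/\lambda^{p_+}=1$; by definition this forces $\|f\|_\pp\le \lambda=\rho(f)^{1/p_+}$.

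There is no real obstacle here: the whole argument is a direct unwinding of the Luxemburg norm against the elementary monotonicity of $t\mapsto t^s$ on $(0,1]$. The only mild technicality is that the infimum defining $\|f\|_\pp$ need not be attained, which is handled by working with $\lambda'>\lambda$ and passing to the limit. Note that $p_+<\infty$ is essential for the second implication, and the standing assumption $p_->0$ is needed for the first; this asymmetry matches the asymmetric roles $p_-$ and $p_+$ play in separating the norm and modular topologies on $\Lp$.
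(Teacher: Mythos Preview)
Your argument is correct and is precisely the standard one. The paper does not actually give a proof of this lemma: it simply remarks that it, together with the neighboring lemmas, is ``proved exactly as in the case when $p_-\geq 1$,'' deferring to the references. The norm--modular comparison inequalities you establish are exactly those recorded separately in the paper as Lemma~\ref{lemma:norm-mod2}, so your write-up effectively supplies the omitted details.
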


\begin{lemma} \label{lemma:monotone}
Suppose $\pp \in \Pp_0$.  Given a sequence $\{f_k\}$ of $\Lp$ functions
that increase pointwise almost everywhere to a function $f$, 
\[ \lim_{k\rightarrow \infty} \|f_k\|_\pp = \|f\|_\pp. \]
\end{lemma}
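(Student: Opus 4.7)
The plan is to show both $L := \lim_k \|f_k\|_\pp \leq \|f\|_\pp$ and $\|f\|_\pp \leq L$. The first inequality is immediate from monotonicity: since $0 \leq f_k \leq f_{k+1} \leq f$ a.e., for every $\lambda > 0$ we have $\rho(f_k/\lambda) \leq \rho(f_{k+1}/\lambda) \leq \rho(f/\lambda)$, so $\|f_k\|_\pp \leq \|f_{k+1}\|_\pp \leq \|f\|_\pp$. Hence the limit $L$ exists in $[0,\infty]$ and $L \leq \|f\|_\pp$.

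For the reverse inequality, I may assume $L < \infty$; otherwise there is nothing to prove. If $L = 0$ then each $\|f_k\|_\pp = 0$, so $f_k = 0$ a.e., and the pointwise limit gives $f = 0$ a.e., whence $\|f\|_\pp = 0$. Otherwise $0 < L < \infty$, and by the definition of the Luxemburg norm it suffices to show $\rho(f/L) \leq 1$. First, I would establish the unit ball property $\rho(f_k/\|f_k\|_\pp) \leq 1$ whenever $\|f_k\|_\pp > 0$: pick $\lambda_n \downarrow \|f_k\|_\pp$ with $\rho(f_k/\lambda_n) \leq 1$; because $p_+ < \infty$ the integrand $(|f_k|/\lambda_n)^{p(x)}$ increases pointwise in $n$ to $(|f_k|/\|f_k\|_\pp)^{p(x)}$, so the classical monotone convergence theorem yields $\rho(f_k/\|f_k\|_\pp) \leq 1$. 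Since $L \geq \|f_k\|_\pp$, this gives $\rho(f_k/L) \leq \rho(f_k/\|f_k\|_\pp) \leq 1$ for every $k$ (with the trivial case $\|f_k\|_\pp = 0$ handled separately).

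Finally, $(|f_k(x)|/L)^{p(x)}$ increases pointwise a.e.\ to $(|f(x)|/L)^{p(x)}$, so another application of the classical monotone convergence theorem gives $\rho(f/L) = \lim_k \rho(f_k/L) \leq 1$, whence $\|f\|_\pp \leq L$. The only obstacle is the unit ball step: because the norm is defined as an infimum, $\|f_k\|_\pp \leq L$ does not instantly yield $\rho(f_k/L) \leq 1$. The hypothesis $\pp \in \Pp_0$, i.e.\ $p_+<\infty$, is exactly what makes the integrand monotone in $\lambda$ along sequences $\lambda_n \downarrow \|f_k\|_\pp$ (there is no $L^\infty$ piece to worry about), so that classical MCT applies; once that routine fact is in hand, the rest is a modular transcription of the usual Lebesgue-space argument.
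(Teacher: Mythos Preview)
Your proof is correct and is precisely the standard argument; the paper does not give its own proof of this lemma but simply remarks that it ``is proved exactly as in the case when $p_-\geq 1$,'' which is what you have written out. One minor point: you tacitly assume $f_k\geq 0$, which is the intended hypothesis (and is how the lemma is used later in the paper), though the statement as written does not make this explicit.
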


\begin{lemma} \label{lemma:norm-mod2}
Suppose $\pp \in \Pp_0$.  Given $f\in \Lp$,
if $\|f\|_\pp \leq 1$,
\[ \rho(f)^{1/p_-} \leq \|f\|_\pp \leq \rho(f)^{1/p_+}; \]
if $\|f\|_\pp \geq 1$,
\[ \rho(f)^{1/p_+} \leq \|f\|_\pp \leq \rho(f)^{1/p_-}. \]
\end{lemma}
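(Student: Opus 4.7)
The plan is to reduce both bounds to the key identity
\[ |f(x)|^{p(x)} = \lambda^{p(x)}\,(|f(x)|/\lambda)^{p(x)} \]
and exploit that $a^{p(x)}$ is bounded between $a^{p_-}$ and $a^{p_+}$ according to whether $a\le 1$ or $a\ge 1$. The one preliminary step needed before all four inequalities is the standard fact that $\rho(f/\|f\|_\pp)\le 1$ whenever $0<\|f\|_\pp<\infty$. This follows by picking $\lambda_k\downarrow \|f\|_\pp$ with $\rho(f/\lambda_k)\le 1$; since $(|f|/\lambda_k)^{p(x)}$ increases pointwise to $(|f|/\|f\|_\pp)^{p(x)}$, the monotone convergence theorem (or Fatou) gives $\rho(f/\|f\|_\pp)\le 1$. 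Note that because $\pp\in\Pp_0$ we have $\Omega_\infty=\emptyset$, so the modular is just the integral term and no $L^\infty$ issues arise. The trivial cases $\|f\|_\pp\in\{0,\infty\}$ are handled separately: in the first $f=0$ a.e.\ and in the second no finite $\lambda$ makes $\rho(f/\lambda)$ finite, so in particular $\rho(f)=\infty$.

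For $\|f\|_\pp\le 1$, set $\lambda=\|f\|_\pp$. For the lower estimate, the factoring above together with $\lambda\le 1$ yields $\lambda^{p(x)}\le \lambda^{p_-}$, so
\[ \rho(f) \le \lambda^{p_-}\rho(f/\lambda) \le \lambda^{p_-}, \]
giving $\rho(f)^{1/p_-}\le\|f\|_\pp$. For the upper estimate, set $\mu=\rho(f)^{1/p_+}$ (assuming $\mu>0$; else $f=0$ a.e.). Since necessarily $\mu\le 1$ (because $\rho(f)\le \rho(f/\lambda)\le 1$ when $\lambda\le 1$, as the integrand only grows when $\lambda$ shrinks), the inequality $\mu^{p(x)}\ge \mu^{p_+}$ holds and
\[ \rho(f/\mu) = \int \mu^{-p(x)}|f(x)|^{p(x)}\,dx \le \mu^{-p_+}\rho(f) = 1, \]
so by the definition of the Luxemburg norm $\|f\|_\pp\le \mu = \rho(f)^{1/p_+}$.

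The case $\|f\|_\pp\ge 1$ is entirely symmetric, with the roles of $p_-$ and $p_+$ exchanged because the inequalities $\lambda^{p(x)}\lessgtr \lambda^{p_\pm}$ reverse when $\lambda\ge 1$. Setting $\lambda=\|f\|_\pp\ge 1$ gives $\lambda^{p(x)}\le\lambda^{p_+}$, hence $\rho(f)\le \lambda^{p_+}$ and thus $\rho(f)^{1/p_+}\le\|f\|_\pp$. For the companion bound, let $\mu=\rho(f)^{1/p_-}\ge 1$ and check directly that $\rho(f/\mu)\le \mu^{-p_-}\rho(f)=1$, which forces $\|f\|_\pp\le \mu$.

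The only step that requires genuine care is the preliminary claim $\rho(f/\|f\|_\pp)\le 1$; once that is in hand everything else is bookkeeping with monotonicity of $a\mapsto a^{p(x)}$. This is the point at which the quasi-Banach setting ($p_-<1$ allowed) differs notationally from the classical $p_-\ge 1$ case, but since the modular $\rho$ is still defined by an ordinary integral and Lemma~\ref{lemma:monotone} (or, equivalently, Fatou applied to the integrand) is already available, the argument goes through verbatim.
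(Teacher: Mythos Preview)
Your argument is correct and is precisely the standard modular-versus-norm computation the paper has in mind when it states (just before Lemma~\ref{lemma:homog-exp}) that this lemma, along with the three preceding it, is ``proved exactly as in the case when $p_-\geq 1$'' without supplying further details. One small point worth making explicit is the claim $\mu=\rho(f)^{1/p_-}\ge 1$ in the second case: when $\|f\|_\pp>1$ the value $\lambda=1$ is excluded from the infimum defining the norm, so $\rho(f)>1$, while the borderline $\|f\|_\pp=1$ is already covered by your first case.
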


\begin{lemma} \label{lemma:minkowski-low}
Given $\pp\in \Pp_0$, $p_-\leq 1$, then for all $f,\,g\in \Lp$,
\[ \|f+g\|_\pp^{p_-} \leq \|f\|_\pp^{p_-}+\|g\|_\pp^{p_-} \]
\end{lemma}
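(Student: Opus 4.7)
The plan is to reduce the quasi-norm inequality to the genuine triangle inequality in a Banach function space by rescaling via Lemma~\ref{lemma:homog-exp}. Set $s = p_-\in(0,1]$ and define $\qq = \pp/p_-$. Then $q_-=1$, so $\qq \in \Pp$ takes values in $[1,\infty]$, and therefore $\|\cdot\|_\qq$ is a genuine norm, in particular subadditive.

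By Lemma~\ref{lemma:homog-exp} applied with exponent $s=p_-$, for any $h\in\Lp$ we have
\[
\|h\|_\pp^{p_-} = \bigl\||h|^{p_-}\bigr\|_{\pp/p_-} = \bigl\||h|^{p_-}\bigr\|_\qq.
\]
Applying this to $h=f+g$, and using monotonicity of the norm together with the pointwise estimate $|f+g|^{p_-}\le(|f|+|g|)^{p_-}$, I reduce to controlling $\bigl\|(|f|+|g|)^{p_-}\bigr\|_\qq$.

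The key elementary step is the scalar inequality $(a+b)^{p_-}\le a^{p_-}+b^{p_-}$ for $a,b\ge0$, valid because $0<p_-\le 1$. Combining this with the triangle inequality in the Banach space $L^\qq$ gives
\[
\bigl\|(|f|+|g|)^{p_-}\bigr\|_\qq
\le \bigl\||f|^{p_-}+|g|^{p_-}\bigr\|_\qq
\le \bigl\||f|^{p_-}\bigr\|_\qq + \bigl\||g|^{p_-}\bigr\|_\qq
= \|f\|_\pp^{p_-}+\|g\|_\pp^{p_-},
\]
which is the desired inequality.

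There is no serious obstacle here; the only thing to verify carefully is that $\qq=\pp/p_-$ indeed lies in $\Pp$ so that Lemma~\ref{lemma:homog-exp} applies and $\|\cdot\|_\qq$ satisfies the ordinary triangle inequality. This holds since $p_+<\infty$ guarantees $\qq$ is a well-defined measurable function with values in $[1,\infty]$, after which everything follows from standard properties of $\Lq$ recorded in Section~\ref{section:prelim}.
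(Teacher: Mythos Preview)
Your proof is correct and follows essentially the same route as the paper's: both arguments use Lemma~\ref{lemma:homog-exp} to pass to $L^{\pp/p_-}$, apply the pointwise scalar inequality $(a+b)^{p_-}\le a^{p_-}+b^{p_-}$ (which the paper calls ``convexity''), and then invoke the genuine Minkowski inequality in $L^{\pp/p_-}$. The only cosmetic difference is that you separate the step $|f+g|^{p_-}\le(|f|+|g|)^{p_-}\le |f|^{p_-}+|g|^{p_-}$ into two, whereas the paper writes it as one.
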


\begin{proof}
Since $\pp/p_-\in \Pp$, by Lemma~\ref{lemma:homog-exp}, convexity and Minkowski's
inequality for the variable Lebesgue spaces,
\begin{multline*}
 \|f+g\|_\pp^{p_-} = \||f+g|^{p_-}\|_{\pp/p_-} \leq
 \||f|^{p_-}+|g|^{p_-}\|_{\pp/p_-}  \\ \leq
\||f|^{p_-}\|_{\pp/p_-}+\||g|^{p_-}\|_{\pp/p_-} = \|f\|_\pp^{p_-}
+\|g\|_\pp^{p_-}.
\end{multline*}
\end{proof}

\begin{remark}
This lemma is false if $p_->1$, but in this case $\|\cdot\|_\pp$ is a
norm and so Minkowski's inequality holds.   This will cause minor
technical problems in the proofs below; we will generally consider the
case $p_-\leq 1$ in detail and sketch the changes required for the
other case.
\end{remark}

\subsection{The Hardy-Littlewood maximal operator}
Given a function $f\in L^1_{loc}$ we define the maximal function of
$f$ by
\[ Mf(x) =\sup_{Q\ni x} \avgint_Q |f(y)|\,dy, \]
where $\avgint_Q g\, dy = |Q|^{-1}\int_Q g\, dy$, and the supremum is
taken over all cubes whose sides are parallel to the coordinate axes.
Throughout, we will make use of the
following class of exponents.

\begin{definition} \label{defn:MP0} Given $\pp \in \mathcal{P}_0$, we
  say $\pp \in \MP$ if $p_- > 0$ and there exists $p_0$, $0 < p_0 <
  p_-$, such that $\|Mf\|_{\pp/p_0} \leq C(n,\pp,p_0)\|f\|_{\pp/p_0}$.
\end{definition}

A useful sufficient condition for the boundedness of the maximal
operator is log-H\"older continuity:  for a proof,
see~\cite{cruz-fiorenza-book,diening-harjulehto-hasto-ruzicka2010}. 

\begin{lemma}
Given $\pp \in \Pp$, such that $1<p_-\leq p_+<\infty$, suppose that
$\pp$ satisfies the log-H\"older continuity condition locally,
\begin{equation}\label{eqn:log-Holder}
|p(x) - p(y)| \leq \frac{C_0}{-\log(|x-y|)}, \qquad |x-y| < 1/2,
\end{equation}
and at infinity:  there exists $p_\infty$ such that
\begin{equation}\label{eqn:decay}
|p(x) - p_\infty| \leq \frac{C_\infty}{\log(e+|x|)}.
\end{equation}
Then $ \|Mf\|_\pp \leq C(n,\pp)\|f\|_\pp$.
\end{lemma}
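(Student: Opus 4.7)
The strategy is to establish the modular inequality $\rho(Mf) \leq C$ whenever $\rho(f) \leq 1$. By the homogeneity of the quasi-norm (Lemma~\ref{lemma:homog-exp}) together with Lemma~\ref{lemma:norm-mod2}, this suffices after normalization to conclude $\|Mf\|_\pp \leq C'\|f\|_\pp$. Fix $f$ with $\rho(f) \leq 1$ and decompose $f = f_0 + f_1$, where $f_0 = f\chi_{\{|f|\leq 1\}}$ and $f_1 = f\chi_{\{|f|>1\}}$. Since $Mf \leq Mf_0 + Mf_1$ and $(a+b)^{p(x)} \leq 2^{p_+}(a^{p(x)}+b^{p(x)})$, it suffices to bound $\rho(Mf_0)$ and $\rho(Mf_1)$ separately.

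The small part is easy and requires no log-H\"older hypothesis: since $|f_0| \leq 1$ one has both $|f_0|^{p_-} \leq |f_0|^{p(\cdot)}$ and $|f_0|^{p_+} \leq |f_0|^{p(\cdot)}$, so $\|f_0\|_{L^{p_-}}, \|f_0\|_{L^{p_+}} \leq 1$. The classical $L^{p_-}$- and $L^{p_+}$-boundedness of $M$ (valid since $p_- > 1$) then yields $\|Mf_0\|_{L^{p_-}}, \|Mf_0\|_{L^{p_+}} \leq C$. Splitting the integral of $(Mf_0)^{p(x)}$ over $\{Mf_0 \leq 1\}$, where $(Mf_0)^{p(x)} \leq (Mf_0)^{p_-}$, and $\{Mf_0 > 1\}$, where $(Mf_0)^{p(x)} \leq (Mf_0)^{p_+}$, bounds each piece in turn.

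The heart of the proof is the bound on $\rho(Mf_1)$. One establishes the pointwise ``exponent swap'' inequality first isolated by Diening: for any cube $Q \ni x$, with $\rho(f) \leq 1$,
$$\left(\avgint_Q |f_1(y)|\,dy\right)^{p(x)} \leq C \avgint_Q |f_1(y)|^{p(y)}\,dy + Ch(x),$$
where $h \in L^1(\R^n)$ is a corrector function, independent of $f$ and $Q$, of the form $h(x) = (e+|x|)^{-\ell p_-}$ for some $\ell > n/p_-$. Two regimes arise: when $|Q| \leq 1$ the local condition~\eqref{eqn:log-Holder} lets one replace $|Q|^{p(x)}$ by $|Q|^{p(y)}$ uniformly in $y \in Q$ at bounded multiplicative cost; when $|Q| \geq 1$ the decay condition~\eqref{eqn:decay} plays the analogous role by replacing $p(x)$ and $p(y)$ with $p_\infty$ up to errors captured by $h$.

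Taking the supremum over $Q \ni x$ yields $(Mf_1(x))^{p(x)} \leq C\, M(|f_1|^{p(\cdot)})(x) + Ch(x)$. Since $M$ is only of weak type $(1,1)$, the first term on the right is not directly integrable; however, using $|f_1| \geq 1$ one sharpens the argument to $Mf_1(x) \leq \bigl(M(|f_1|^{p(\cdot)/p_-})(x)\bigr)^{p_-/p(x)}$ modulo the same corrector. Since $g := |f_1|^{p(\cdot)/p_-}$ satisfies $\|g\|_{L^{p_-}}^{p_-} = \int |f_1|^{p(\cdot)}\,dx \leq 1$, the classical $L^{p_-}$-boundedness of $M$ closes the loop after another split into $\{Mg \leq 1\}$ and $\{Mg > 1\}$. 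The main obstacle is the careful bookkeeping required to merge the local and at-infinity log-H\"older estimates so that the corrector $h$ is globally in $L^1$ with a constant depending only on $C_0$, $C_\infty$, $p_-$, $p_+$, $p_\infty$, and $n$.
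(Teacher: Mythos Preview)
The paper does not prove this lemma; it merely cites \cite{cruz-fiorenza-book,diening-harjulehto-hasto-ruzicka2010} for a proof.  Your outline follows the standard Diening strategy found in those references, and the treatment of the large part $f_1$ via the pointwise ``exponent swap'' inequality is the correct core idea.

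There is, however, a genuine error in your handling of the small part $f_0$.  You claim that $|f_0|\le 1$ implies $|f_0|^{p_-}\le |f_0|^{p(\cdot)}$, but the inequality runs the other way: for $0\le t\le 1$ the map $a\mapsto t^a$ is nonincreasing, so $|f_0|^{p_-}\ge |f_0|^{p(x)}$.  Consequently $\rho(f)\le 1$ does not force $f_0\in L^{p_-}$, and in general it will not be (take, e.g., $p(x)=2$ on $B(0,1)$ and $p(x)=3$ outside, with $f_0(x)=|x|^{-n/2.5}\chi_{\{|x|>1\}}$).  Since $|f_0|\le 1$ gives $Mf_0\le 1$, your set $\{Mf_0>1\}$ is empty, so the entire burden falls on the $L^{p_-}$ bound you cannot justify.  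The decay condition \eqref{eqn:decay} is needed here as well: one compares $p(x)$ to $p_\infty$ rather than $p_-$, and the discrepancy again produces an $L^1$ corrector of the form $(e+|x|)^{-m}$.  In the references this is typically packaged into a single key lemma that covers both $f_0$ and $f_1$ simultaneously, rather than via the split you propose.
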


\begin{remark}
 We want to stress that while in practice it is common to assume that
the exponent $\pp$ satisfies the log-H\"older continuity conditions,
we will not assume this in our main results.  For a further discussion
of sufficient conditions for the maximal operator to be bounded,
see~\cite{cruz-fiorenza-book, diening-harjulehto-hasto-ruzicka2010} and the references
they contain.
\end{remark}

\begin{lemma} \label{lemma:max-up}
Given $\pp \in \Pp$, if the maximal operator is bounded on $\Lp$, then
for every $s>1$, it is bounded on $L^{s\pp}$.
\end{lemma}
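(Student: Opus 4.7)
The plan is to reduce boundedness on $L^{s\pp}$ to the assumed boundedness on $\Lp$ using a pointwise inequality for the maximal operator together with the scaling identity $\||g|^s\|_\pp = \|g\|_{s\pp}^s$ from Lemma~\ref{lemma:homog-exp}.

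First I would record the elementary pointwise bound
\[
(Mf(x))^s \leq M(|f|^s)(x), \qquad x \in \R^n,
\]
for $s \geq 1$. This follows by applying Jensen's inequality to the convex function $t\mapsto t^s$ inside the average defining $Mf$, then taking the supremum over cubes. Note this is exactly the place where $s > 1$ is used.

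Next I would chain together the scaling identity, this pointwise bound, and the hypothesis that $M\colon \Lp \to \Lp$ is bounded. Writing $C$ for the operator norm of $M$ on $\Lp$, we get
\[
\|Mf\|_{s\pp}^{s}
= \bigl\|(Mf)^{s}\bigr\|_{\pp}
\leq \bigl\|M(|f|^{s})\bigr\|_{\pp}
\leq C\,\bigl\||f|^{s}\bigr\|_{\pp}
= C\,\|f\|_{s\pp}^{s}.
\]
Taking $s$-th roots yields $\|Mf\|_{s\pp} \leq C^{1/s}\|f\|_{s\pp}$, which is the claim.

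There is no real obstacle here; the only subtle point is a mild bookkeeping issue, namely verifying that $s\pp$ lies in a class for which the scaling identity and modular machinery apply. If one wishes to allow $p_+ = \infty$, the $L^\infty$ portion must be handled separately using the trivial bound $\|Mf\|_{L^\infty(\Omega_\infty)} \leq \|f\|_{L^\infty(\Omega_\infty)}$ together with the definition of $\rho$ on the set $\Omega_\infty$; otherwise, the argument above applies verbatim once one observes that $s\pp$ has the same boundedness properties as $\pp$ for the purposes of Lemma~\ref{lemma:homog-exp}.
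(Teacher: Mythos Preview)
Your proof is correct and is essentially identical to the paper's: both use the pointwise bound $(Mf)^s \le M(|f|^s)$ (the paper attributes it to H\"older, you to Jensen---these are the same thing for averages) together with the scaling identity of Lemma~\ref{lemma:homog-exp}, then take $s$-th roots. Your closing remark about the $p_+=\infty$ case is a reasonable piece of bookkeeping that the paper simply glosses over.
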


\begin{proof}
This follows at once from H\"older's inequality and
Lemma~\ref{lemma:homog-exp}:
\[  \|Mf\|_{s\pp} = \|(Mf)^s\|_\pp^{1/s} \leq \|M(|f|^s)\|_\pp^{1/s}
\leq
C^{1/s} \||f|^s\|_\pp^{1/s}=C^{1/s}\|f\|_{s\pp}. \]
\end{proof}

The following  necessary condition is due to
Kopaliani~\cite{MR2341730}.  It should be compared to the Muckenhoupt
$A_p$ condition from the study of weighted norm inequalities.
(See~\cite{duoandikoetxea01,garcia-cuerva-rubiodefrancia85}.)

\begin{lemma} \label{lemma:kopaliani}
Given $\pp \in \Pp$, if the maximal operator is bounded on $\Lp$,
then for every ball $B\subset \R^n$,
\[ \|\chi_B\|_\pp \|\chi_B\|_\cpp \leq C|B|. \]
\end{lemma}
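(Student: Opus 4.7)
The strategy is to test the boundedness of $M$ against indicator functions of cubes and then recover the $\cpp$-norm via the converse H\"older inequality in Lemma~\ref{lemma:holder}. Since the estimate is invariant, up to dimensional constants, under passing from balls to cubes of comparable size (both in measure and in the norms of their characteristic functions), it suffices to prove
\[ \|\chi_Q\|_\pp \|\chi_Q\|_\cpp \leq C|Q| \]
for every cube $Q$ with sides parallel to the coordinate axes.

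Fix such a $Q$ and fix a nonnegative $f \in \Lp$ with $\supp(f) \subset Q$. For every $x \in Q$,
\[ Mf(x) \geq \avgint_Q f(y)\, dy, \]
so pointwise $Mf \geq \bigl( |Q|^{-1} \int_Q f \bigr)\chi_Q$. The assumed boundedness of $M$ on $\Lp$ then yields
\[ \frac{1}{|Q|}\left(\int_Q f(y)\,dy\right)\|\chi_Q\|_\pp \leq \|Mf\|_\pp \leq C\|f\|_\pp, \]
equivalently
\[ \int_Q f(y)\,dy \leq \frac{C|Q|}{\|\chi_Q\|_\pp}\,\|f\|_\pp \qquad (f\geq 0,\ \supp(f)\subset Q). \]

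Next I would invoke the second assertion of Lemma~\ref{lemma:holder} with the roles of $\pp$ and $\cpp$ interchanged (which is legitimate because $(\cpp)' = \pp$ and $\cpp \in \Pp$). Since replacing $g$ by $|g|$ preserves $\|g\|_\pp$ and can only increase $\int \chi_Q g$, the supremum may be taken over nonnegative test functions, so
\[ \|\chi_Q\|_\cpp \leq C\sup_{\|g\|_\pp \leq 1,\, g\geq 0} \int_Q g(y)\,dy \leq \frac{C|Q|}{\|\chi_Q\|_\pp}, \]
and rearranging gives the claim.

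I do not anticipate any genuine obstacle here: the argument is essentially duality plus a single test function. The only minor points of care are verifying that Lemma~\ref{lemma:holder} may be applied with $\pp$ and $\cpp$ swapped, and tracking the dimensional constants that arise when passing between balls and cubes (for instance via the smallest cube containing $B$ and the largest cube contained in $B$, whose measures differ by a factor depending only on $n$).
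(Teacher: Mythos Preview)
Your argument is correct; the only subtlety worth noting is that in the final step the test functions $g$ in Lemma~\ref{lemma:holder} are not a priori supported in $Q$, but since $\int_{\R^n}\chi_Q g = \int_Q g\chi_Q$ and $\|g\chi_Q\|_\pp \le \|g\|_\pp$, one may replace $g$ by $g\chi_Q$ and the bound from the previous display applies.

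As for comparison: the paper does not supply a proof of this lemma at all --- it is stated as a known necessary condition and attributed to Kopaliani~\cite{MR2341730}. Your proof is the standard one (test the maximal inequality on $f\chi_Q$ and dualize), and indeed this is essentially how the result is proved in the references the paper cites.
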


The maximal operator also satisfies a vector-valued inequality.   This
result was proved using extrapolation
in~\cite{MR2210118}.  (See also~\cite{cruz-fiorenza-book,cruz-martell-perezBook}.)

\begin{lemma} \label{lemma:max-vector}
Given $\pp \in \Pp$ such that $p_+<\infty$, if the maximal operator is bounded on $\Lp$, then
 for any
$r$, $1<r<\infty$,
\[ \bigg\| \bigg(\sum_k (Mf_k)^r\bigg)^{1/r}\bigg\|_\pp
\leq C(n,\pp,r) \bigg\| \bigg(\sum_k |f_k|^r\bigg)^{1/r}\bigg\|_\pp. \]
\end{lemma}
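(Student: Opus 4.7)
The plan is to prove this by Rubio de Francia extrapolation, reducing the variable-exponent vector-valued inequality to the classical weighted Fefferman-Stein inequality. This is essentially the strategy used in~\cite{MR2210118}.

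First, I recall the classical weighted vector-valued Fefferman-Stein maximal inequality: for any $p_0\in(1,\infty)$, any $r\in(1,\infty)$, and any weight $w\in A_{p_0}$,
\[
\bigg\|\bigg(\sum_k (Mf_k)^r\bigg)^{1/r}\bigg\|_{L^{p_0}(w)}
\leq C(n,r,p_0,[w]_{A_{p_0}}) \bigg\|\bigg(\sum_k |f_k|^r\bigg)^{1/r}\bigg\|_{L^{p_0}(w)}.
\]
Since $M$ is bounded on $\Lp$ with $\pp\in\Pp$, one has $p_->1$, so I may choose $p_0$ with $1<p_0<p_-$; the rescaled exponent $\pp/p_0$ then still has lower bound strictly greater than~$1$.

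Next, I apply the variable-exponent extrapolation theorem (see~\cite{MR2210118,cruz-fiorenza-book}): if a family of pairs $(F,G)$ of non-negative measurable functions satisfies $\|F\|_{L^{p_0}(w)}\leq C\|G\|_{L^{p_0}(w)}$ uniformly over $w\in A_{p_0}$, then $\|F\|_\pp\leq C'\|G\|_\pp$ provided $p_0<p_-$ and $M$ is bounded on $L^{(\pp/p_0)'}$. Applied to the truncated pairs $F_N=(\sum_{k\leq N}(Mf_k)^r)^{1/r}$ and $G_N=(\sum_{k\leq N}|f_k|^r)^{1/r}$, and then letting $N\to\infty$ using Lemma~\ref{lemma:monotone} on both sides, this yields the claimed inequality.

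The main obstacle is verifying the remaining hypothesis of the extrapolation theorem, namely boundedness of $M$ on $L^{(\pp/p_0)'}$, since we are only given boundedness on $\Lp$. This is handled by Diening's theorem on the openness and self-duality of the class of exponents for which the maximal operator is bounded: if $M$ is bounded on $\Lp$ with $1<p_-\leq p_+<\infty$, then it is also bounded on $L^\cpp$, and more generally on $L^\qq$ for $\qq$ ranging over an open neighborhood of $\pp$ under the appropriate rescaling. Choosing $p_0$ sufficiently close to $1$ ensures that $(\pp/p_0)'$ falls in this neighborhood, supplying the missing hypothesis and completing the argument.
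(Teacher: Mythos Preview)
Your proposal is correct and is precisely the approach taken in the reference~\cite{MR2210118} (and in~\cite{cruz-fiorenza-book}) that the paper cites in lieu of a proof; the paper itself gives no argument for this lemma. You correctly identify the one genuine subtlety: passing from ``$M$ bounded on $\Lp$'' to ``$M$ bounded on $L^{(\pp/p_0)'}$ for some $p_0>1$'' requires not just the duality statement recorded in the paper as Lemma~\ref{lemma:diening}, but the stronger left-openness part of Diening's theorem (i.e., that $M$ bounded on $\Lp$ implies $M$ bounded on $L^{\pp/p_0}$ for some $p_0>1$), after which duality gives the needed boundedness on $L^{(\pp/p_0)'}$.
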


Our final lemma is a deep result due to
Diening~\cite{MR2166733,diening-harjulehto-hasto-ruzicka2010}.  

\begin{lemma} \label{lemma:diening}
Given $\pp\in \Pp$ such that $1< p_- \leq p_+<\infty$, the maximal operator is bounded
on $L^\pp$ if and only if it is bounded on $L^\cpp$.  
\end{lemma}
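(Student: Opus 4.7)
My plan is to use the standard strategy of linearizing the maximal operator via averaging operators over disjoint families of cubes, exploiting the fact that such averaging operators are essentially self-adjoint, and then transferring the boundedness via the duality in Lemma~\ref{lemma:holder}. The hypothesis $1<p_-\leq p_+<\infty$ is crucial: it guarantees that $L^\pp$ is a Banach space whose dual is $L^\cpp$ (up to equivalent norms), so duality can be invoked freely on both sides.

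First I would introduce, for any (at most countable) family $\mathcal{Q}$ of pairwise disjoint cubes in $\R^n$, the averaging operator
\[
A_\mathcal{Q} f(x) = \sum_{Q\in\mathcal{Q}} \chi_Q(x) \avgint_Q |f(y)|\,dy.
\]
Clearly $A_\mathcal{Q} f(x)\leq Mf(x)$ pointwise, so boundedness of $M$ on $L^\pp$ gives uniform (in $\mathcal{Q}$) boundedness of $A_\mathcal{Q}$ on $L^\pp$. The substantive half of this step is the converse: uniform boundedness of the $A_\mathcal{Q}$ on $L^\pp$ implies boundedness of $M$ on $L^\pp$. I would prove this via a Calder\'on-Zygmund style decomposition: for each $\lambda>0$ with $\rho_\pp(f)$ suitably controlled, extract a disjoint family $\mathcal{Q}_\lambda$ of maximal dyadic cubes with $\avgint_Q |f|>\lambda$, observe that $\{Mf>c\lambda\}$ is essentially covered by $\bigcup \mathcal{Q}_\lambda$, and control the level sets of $Mf$ by those of $A_{\mathcal{Q}_\lambda}f$, so that the modular of $Mf$ is dominated by that of a sum of $A_\mathcal{Q} f$-type terms.

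Second, the averaging operator is self-adjoint: for nonnegative $f,g$,
\[
\int_\subRn A_\mathcal{Q} f(x)\, g(x)\,dx
= \sum_{Q\in\mathcal{Q}} \avgint_Q f \int_Q g
= \int_\subRn f(x)\, A_\mathcal{Q} g(x)\,dx.
\]
Combining this identity with Lemma~\ref{lemma:holder} (in both directions) gives the equivalence
\[
\|A_\mathcal{Q} f\|_\pp \leq C\|f\|_\pp \text{ for all } f
\iff
\|A_\mathcal{Q} g\|_\cpp \leq C'\|g\|_\cpp \text{ for all } g,
\]
with constants depending on $\pp$ but not on $\mathcal{Q}$. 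Applying the first step in both $L^\pp$ and $L^\cpp$ and chaining the equivalences yields the desired conclusion: $M$ is bounded on $L^\pp$ iff $A_\mathcal{Q}$ is uniformly bounded on $L^\pp$ iff $A_\mathcal{Q}$ is uniformly bounded on $L^\cpp$ iff $M$ is bounded on $L^\cpp$.

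The main obstacle is the nontrivial half of the first step, namely that uniform boundedness of the averaging operators implies boundedness of $M$ itself. The delicate point is that, unlike in the constant exponent setting where one can simply invoke homogeneity and interpolate, here the modular $\rho_\pp$ is not homogeneous and the argument must be carried out modularly, carefully tracking how truncations of $f$ at different thresholds interact with the pointwise inequality $p(x)\geq p_->1$. This is precisely the content of Diening's original work, and I would follow that route, using $p_->1$ to sum a geometric series in the Calder\'on-Zygmund estimates and $p_+<\infty$ to pass between the modular and the Luxemburg norm via Lemma~\ref{lemma:norm-mod2}.
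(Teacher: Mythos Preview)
The paper does not prove this lemma at all; it simply states it as ``a deep result due to Diening'' and cites \cite{MR2166733,diening-harjulehto-hasto-ruzicka2010}. Your sketch is in fact an accurate outline of Diening's original argument: the characterization of the boundedness of $M$ via uniform bounds on averaging operators over disjoint families of cubes, the self-adjointness of those averaging operators, and the duality in Lemma~\ref{lemma:holder} to pass between $L^\pp$ and $L^\cpp$. You have also correctly isolated the genuinely hard step, namely that uniform boundedness of the $A_{\mathcal Q}$ implies boundedness of $M$; this is where the real work in Diening's proof lies, and your description of the modular Calder\'on--Zygmund argument is the right picture, though carrying it out rigorously is substantially more delicate than your paragraph suggests (this is why the paper treats it as a black box). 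In short, your proposal is not at odds with the paper --- it goes beyond it by sketching the cited proof rather than merely invoking it.
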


\section{The maximal characterization}
\label{section:maximal}

In this section we define the variable Hardy spaces and give
equivalent characterizations in terms of maximal operators.  To state
our results, we need a few definitions. Let $\Ss$ be the space of Schwartz functions and let
$\Ss'$ denote the space of tempered distributions.   We will say that
a tempered distribution $f$ is bounded if $f*\Phi\in L^\infty$ for
every $\Phi\in \Ss$.    For complete
information on distributions, see~\cite{MR1681462, MR0304972}.
Define the family of semi-norms on $\|\cdot\|_{\alpha,\beta}$,
$\alpha$ and $\beta$ multi-indices, on
$\Ss$ by
\[ \|f\|_{a,b} = \sup_{x\in \R^n}  |x^\alpha D^\beta f(x)|, \]
and for each integer $N>0$ let
\[ \Ss_N = \big\{ f \in \Ss : \|f\|_{\alpha,\beta} \leq 1,
|\alpha|,|\beta| \leq N \big\}. \]

Given $\Phi$ and $t>0$, let $\Phi_t(x)=t^{-n}\Phi(x/t)$.  
We define three maximal operators:  given $\Phi\in \Ss$ and $f\in \Ss'$,
define the radial maximal operator
 \[    \Mr f = \sup_{t > 0} |f \ast \Phi_t (x)|, \]
and for each $N>0$ the grand maximal operator,
\[     \Mg f (x) = \sup_{\Phi \in \Ss_N} \Mr f(x). \]
Finally, define the non-tangential maximal operator
\[ \Mp f(x) = \sup_{|x-y|<t} |P_t*f(y)|, \]
where $P$ is the Poisson kernel
\[ P(x) =
\frac{\Gamma\left(\frac{n+1}{2}\right)}{\pi^{\frac{n+1}{2}}}\frac{1}{(1+|x|^2)^{\frac{n+1}{2}}}. \]

Our main result in this section is the following.

\begin{theorem} \label{Thm-Max}
Given $\pp \in \MP$, for every $f\in \Ss'$ the following are equivalent:
\begin{enumerate}

\item there exists $\Phi\in \Ss$, $\int \Phi(x) dx \neq 0$, such that
  $\Mr f \in \Lp$;

\item for all $N>n/p_0 +n+1$, $\Mg f\in \Lp$;

\item $f$ is a bounded distribution and $\Mp f \in \Lp$.

\end{enumerate}
Moreover, the quantities $\|\Mr f\|_\pp$, $\|\Mg f\|_\pp$ and $\|\Mp
f\|_\pp$ are comparable with constants that depend only on $\pp$ and
$n$ and not on $f$.
\end{theorem}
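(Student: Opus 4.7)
The plan is to prove the chain (2) $\Rightarrow$ (1) $\Rightarrow$ (2) with norm comparability, and then connect to (3) via properties of the Poisson kernel. The implication (2) $\Rightarrow$ (1) is immediate: fix any $\Phi_0 \in \Ss$ with $\int \Phi_0 \neq 0$ and normalize so that a rescaled multiple lies in $\Ss_N$; then $\Mr f \leq C \Mg f$ pointwise, giving the corresponding $\Lp$ inequality.

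The heart of the matter is (1) $\Rightarrow$ (2). Following the classical Fefferman--Stein strategy, I would introduce the Peetre-type tangential maximal function
\[
M_\Phi^{**} f(x) = \sup_{t > 0,\, y \in \R^n} \frac{|(f * \Phi_t)(y)|}{(1 + |x-y|/t)^{n/p_0}},
\]
where $p_0 \in (0, p_-)$ is chosen so that $\pp \in \MP$, and establish two pointwise estimates. The first is a subharmonic-style bound
\[
M_\Phi^{**} f(x) \lesssim \bigl[M(|\Mr f|^{p_0})(x)\bigr]^{1/p_0};
\]
its proof uses the regularity of $\Phi$ to control $|f*\Phi_t(y)|$ at nearby points in terms of $\Mr f$, then averages over a ball of radius comparable to $t$, with the Peetre weight calibrated precisely so that $L^{p_0}$ averaging closes. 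The second is that, for $N > n/p_0 + n + 1$,
\[
\Mg f(x) \lesssim M_\Phi^{**} f(x),
\]
which uses a Calder\'on-type reproducing formula expressing each $\Psi \in \Ss_N$ as a superposition of dilates of $\Phi$ paired against auxiliary functions with controlled $\Ss$-seminorms. Combining these pointwise estimates with Lemma~\ref{lemma:homog-exp} and $\pp \in \MP$ then yields
\[
\|\Mg f\|_\pp^{p_0} = \bigl\|(\Mg f)^{p_0}\bigr\|_{\pp/p_0} \lesssim \bigl\|M(|\Mr f|^{p_0})\bigr\|_{\pp/p_0} \lesssim \bigl\|(\Mr f)^{p_0}\bigr\|_{\pp/p_0} = \|\Mr f\|_\pp^{p_0}.
\]

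For the equivalence with (3), I would first show that $\Mr f \in \Lp$ forces $f$ to be a bounded distribution: for any $\Psi \in \Ss$, a translation-and-dilation argument bounds $|f*\Psi|$ pointwise by values of $\Mr f$ on a fixed scale, and $\Mr f \in \Lp$ precludes it from being infinite a.e., so $f*\Psi \in L^\infty$. Once $f$ is bounded, $P_t * f$ is well-defined and agrees with the harmonic extension $u(x,t)$. The inequality $\|\Mp f\|_\pp \lesssim \|\Mg f\|_\pp$ follows by approximating the Poisson kernel by truncated Schwartz functions with uniformly bounded $\Ss_N$-seminorms, giving $\Mp f \lesssim \Mg f$ pointwise. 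In the opposite direction, subharmonicity of $|u(x,t)|^{p_0}$ on the upper half-space produces a pointwise bound $\Mr f(x) \lesssim [M(|\Mp f|^{p_0})(x)]^{1/p_0}$, after which one more application of $\pp \in \MP$ yields $\|\Mr f\|_\pp \lesssim \|\Mp f\|_\pp$.

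The hardest step will be the pointwise bound $\Mg f \lesssim M_\Phi^{**} f$ with the explicit threshold $N > n/p_0 + n + 1$: one must construct a reproducing formula with sufficiently fine quantitative control on the derivatives and moments of the auxiliary Schwartz functions so that the reconstruction integral converges absolutely even after being multiplied by the Peetre weight $(1+|x-y|/t)^{n/p_0}$. Once this pointwise machinery is in place, the variable-exponent analysis enters only through a single application of $\pp \in \MP$ to $|\Mr f|^{p_0}$ (and analogously for $|\Mp f|^{p_0}$), so the novel content of the theorem reduces to combining classical pointwise estimates with Lemma~\ref{lemma:homog-exp} and the one $L^{\pp/p_0}$ bound on $M$ built into the hypothesis.
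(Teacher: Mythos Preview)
Your overall architecture matches the paper's, but the step you flag as routine in $(1)\Rightarrow(2)$ hides the real difficulty. The ``subharmonic-style'' pointwise bound
\[
M_\Phi^{**} f(x) \lesssim \bigl[M(|\Mr f|^{p_0})(x)\bigr]^{1/p_0}
\]
is not available as stated. The averaging argument you describe --- controlling $|f*\Phi_t(y)|$ by values at nearby points and integrating over a ball of radius $\sim t$ --- only yields $M_\Phi^{**} f(x) \lesssim [M((\Mone f)^{p_0})(x)]^{1/p_0}$, with the \emph{nontangential} maximal function $\Mone f$ inside, because the quantity you can bound pointwise at $z\in B(y,t)$ is $\sup_s |f*\Phi_s(z')|$ over $|z'-z|<s$, not the radial $\sup_s|f*\Phi_s(z)|$. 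Passing from $\Mone$ to $\Mr$ is a separate step: the regularity estimate for $|f*\Phi_t(y)-f*\Phi_t(y')|$ involves $\nabla(f*\Phi_t)$, which is controlled by $\Mg f$, not by $\Mr f$. One therefore restricts to the set $F=\{x:\Mg f(x)\leq\lambda\Mone f(x)\}$, obtains $\Mone f(x)\leq cM((\Mr f)^{p_0})(x)^{1/p_0}$ there, and absorbs the contribution of $F^c$ using $\|\Mone f\chi_{F^c}\|_\pp\leq\lambda^{-1}\|\Mg f\|_\pp\leq C\lambda^{-1}\|\Mone f\|_\pp$. This absorption requires knowing \emph{a priori} that $\|\Mone f\|_\pp<\infty$, which you have not established from $\Mr f\in\Lp$ alone. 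The paper devotes an entire subsection to this: it introduces truncated operators $M_{\Phi,1}^{\epsilon,L}$, constructs an $f$-dependent $L$ for which these lie in $\Lp$, reruns the whole chain of inequalities with constants independent of $\epsilon$, and then lets $\epsilon\to 0$.

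A smaller issue: in $(3)\Rightarrow(1)$ you invoke subharmonicity of $|u(\cdot,t)|^{p_0}$, but for a single harmonic function $|u|^{p_0}$ is subharmonic only when $p_0\geq 1$, whereas here $p_0<p_-$ may be arbitrarily small. The paper instead cites a direct pointwise bound $\Mr f\leq c\,\Mp f$ for a particular $\Phi\in\Ss$.
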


If we choose $N$ sufficiently large, then by Theorem~\ref{Thm-Max} we
can use any of these three maximal operators to given an equivalent
definition of the variable Hardy spaces.  To be definite we will use
the grand maximal operator, but in the rest of the paper we will 
move between these three norms without comment.

\begin{definition} \label{Hp-defn}
Let  $\pp \in \MP$. For $N>n/p_0 +n+1$, define the space $H^\pp$ to be the collection of
$f\in \Ss'$ such that $\|f\|_{H^\pp}=\|\Mg f\|_\pp<\infty$. 
\end{definition}

\begin{proof}[Proof of Theorem \ref{Thm-Max}]
The proof is similar to that of the corresponding
result for real Hardy spaces:  cf.~\cite{MR1342077, stein93}.  
The most difficult step is the implication
$(1)\Rightarrow (2)$ which we will prove in
Sections~\ref{subsec:12a} and~\ref{sec:apriori}.   We will then prove $(2)\Rightarrow (1)$ in Section \ref{subsec:21} and $(2)
\Rightarrow (3) \Rightarrow (1)$ in Section~\ref{subsec:2-3}.

\subsection{The implication $(1)\Rightarrow (2)$}
\label{subsec:12a}

The proof requires two supplemental operators:  the non-tangential maximal operator with aperture $1$,
\[     \Mone f(x) = \sup_{\substack{|x - y| < t\\ t > 0}} |f \ast \Phi_t (y)|, \]
and the tangential maximal operator,
\[     \Mt f(x) = \sup_{\substack{y \in \R^n\\ t > 0}} |\Phi_t \ast f(x - y)|
\left( 1 + \frac{|y|}{t} \right)^{-T}. \]
Note that $T$ is a parameter in the definition of $\Mt$ and not just
notation indicating the that this is a ``tangential'' operator.

We will prove this implication by proving three norm
inequalities.
First, if
$N\geq T+n+1$, we will show that
\begin{equation} \label{eqn:Step-a}
\|\Mg f\|_\pp \leq C(n,\Phi) \|\Mt f\|_\pp.
\end{equation}
Second, if $T>n/p_0$, we will show that
\begin{equation} \label{eqn:Step-cd}
\|\Mt f \|_\pp \leq C(n,T,\pp,p_0)\|\Mone f \|_\pp.
\end{equation}
Finally, we will show that
\begin{equation} \label{eqn:Step-e}
 \|\Mone f\|_\pp \leq C(\pp,T)\|\Mr f\|_\pp.
\end{equation}
To prove this we will first make the {\em a priori} assumption that
$\Mone f\in \Lp$; we will then show that this is always the case by
showing that if $\Mr f\in
\Lp$, then \eqref{eqn:Step-e} holds with a constant that depends on
$f$.   This proof parallels the proof we just sketched; to emphasize
this we will defer it to Section~\ref{sec:apriori} and organize it
similarly.

\subsubsection*{Proof of inequality \eqref{eqn:Step-a}}

The proof requires a lemma from~\cite[Lemma~2.1]{MR1342077}.

\begin{lemma} \label{lemma:lu}
Let $\Phi \in \Ss$, $\int \Phi(x)\,dx \neq 0$.  Then for any $\Psi \in
\Ss$ and $T>0$, there exist functions $\Theta^s \in \Ss$, $0<s<1$,
such that
\[ \Psi(x) = \int_0^1 \Phi_t * \Theta^s(x)\,dx \]
and for all $m\geq T+1$,
\[ \int_\subRn (1+|x|)^T |\Theta^s(x)|\,dx \leq C(\Phi,n) s^T
\|\Psi\|_{m+n,m}. \]
\end{lemma}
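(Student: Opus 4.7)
My plan is to establish this decomposition by combining a Calderón-type reproducing formula adapted to $\Phi$ with a cancellation argument that delivers the $s^T$ factor. Normalize so that $\int\Phi\,dx=1$, whence $\hat\Phi(0)=1$. Introduce the auxiliary Schwartz function $\eta(y)=\nabla\cdot(y\Phi(y))$, for which a direct computation gives $\partial_t\Phi_t(x)=-t^{-1}\eta_t(x)$ and $\int\eta\,dy=0$. Because $\Phi_t*\Psi\to\Psi$ in the Schwartz topology as $t\to 0^+$, the fundamental theorem of calculus applied on $(0,1]$ yields the preliminary identity
\[ \Psi = \Phi_1*\Psi+\int_0^1 \tfrac{1}{t}\,(\eta_t*\Psi)\,dt. \]

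The next step converts each piece into the required form $\Phi_s*\Theta^{(s)}$. Since $\hat\Phi(0)=1$, I would fix $\chi\in C_c^\infty(\R^n)$ with $\chi\equiv 1$ near the origin and $\hat\Phi$ bounded away from zero on $\supp\chi$, and define a Schwartz deconvolver $\Gamma$ by $\hat\Gamma=\chi/\hat\Phi$, so that $\Phi*\Gamma=\F^{-1}\chi$ is a Schwartz mollifier of total mass one. Rescaling gives $\Phi_t*\Gamma_t=(\F^{-1}\chi)_t$, acting as the identity at frequencies $\lesssim 1/t$. Setting $\Theta^{(s)}=\Gamma_s*\eta_s*\Psi$ inside the integral and absorbing the boundary term $\Phi_1*\Psi$ through a separate reproducing formula at scale one yields an identity of the required form, once the high-frequency tail (where $\hat\Phi(s\,\cdot)$ may vanish) has been reabsorbed via a geometric iteration on dyadic scales.

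The weighted $L^1$ estimate with the $s^T$ gain is the technical crux. Because $\hat\eta(\xi)=-\xi\cdot\nabla\hat\Phi(\xi)$ vanishes to first order at the origin, applying the construction iteratively (or, equivalently, replacing $\eta$ by a suitable linear combination of its dilates with matched moments) yields a Schwartz function with $\lceil T\rceil$ vanishing moments. Expanding $\Psi(x-sy)$ in a Taylor polynomial of degree $\lceil T\rceil-1$, the vanishing moments annihilate the polynomial part and leave a remainder of size $s^T|y|^T\sup_{|\alpha|=\lceil T\rceil}|\partial^\alpha\Psi|$, yielding $|\Theta^{(s)}(x)|\lesssim s^T\|\Psi\|_{m+n,m}(1+|x|)^{-m}$ pointwise. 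Multiplying by the weight $(1+|x|)^T$ and integrating produces the claimed bound provided $m\geq T+1$, with the rapid decay of $\Gamma$ absorbed into a universal constant.

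The main obstacle is arranging the Fourier-side deconvolution against $\hat\Phi$ so that $\Theta^{(s)}$ genuinely lies in $\Ss$ with uniform control, not merely as a tempered distribution, while simultaneously extracting the sharp $s^T$ factor. The portion of $\hat\Psi$ at frequencies where $\hat\Phi(s\,\cdot)$ can vanish must be routed through iterated reproducing formulas, and the cancellation has to be reorganized at every iteration so that the accumulated polynomial-weighted $L^1$ bound remains of order $s^T$; the bookkeeping for this interplay is precisely why the authors quote the lemma from \cite{MR1342077} rather than reproving it here.
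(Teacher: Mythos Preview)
The paper does not prove this lemma; it is quoted from \cite[Lemma~2.1]{MR1342077}, as you yourself note in your closing sentence. There is thus no in-paper argument to compare against, so I can only evaluate your sketch on its own merits.

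The overall architecture is plausible, but the proposal has genuine gaps that you flag without closing. First, your reproducing identity leaves a boundary term $\Phi_1*\Psi$ sitting outside the integral $\int_0^1\Phi_s*\Theta^s\,ds$; ``absorbing it through a separate reproducing formula at scale one'' is not a concrete step, and you cannot place a point mass at $s=1$ since $\Theta^s$ must be an honest family of Schwartz functions indexed by $0<s<1$. Second, your $\eta$ has exactly one vanishing moment, so the Taylor-remainder argument gains one power of $s$; after the factor $1/s$ already present in your integrand this is $O(1)$, not $s^T$. Upgrading to $\lceil T\rceil$ vanishing moments by ``applying the construction iteratively'' is precisely the construction that has to be carried out, and it is not. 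Third, and most seriously, your deconvolver $\Gamma$ satisfies only $\Phi*\Gamma=\mathcal F^{-1}\chi$, so $\Phi_s*(\Gamma_s*\eta_s*\Psi)$ differs from $\eta_s*\Psi$ by a high-frequency remainder that must itself be represented in the form $\Phi_s*(\cdot)$ with uniform Schwartz bounds and the same $s^T$ decay; the ``geometric iteration on dyadic scales'' you invoke is the actual content of the lemma and remains a phrase rather than an argument.

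In short, the sketch points in a reasonable direction but does not supply a proof; the paper's decision to cite rather than reprove reflects exactly the substantiality of the steps you have left open.
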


Fix $N\geq T+n+1$ and fix $\Psi \in S_N$.  Then by the definition of
the tangential maximal operator, by making the change of variables
$w=z/t$, and by Lemma~\ref{lemma:lu}, we get
\begin{align*}
|f*\Psi_t(x)|
& \leq \int_0^1 \int_\subRn
|f*\Phi_{st}(x-z)||\Theta^s(z/t)|t^{-n}\,dz\,ds \\
& = \int_0^1 \int_\subRn
|f*\Phi_{st}(x-z)|\left(1+\frac{|z|}{st}\right)^{-T} \\
& \qquad \qquad \times \left(1+\frac{|z|}{st}\right)^{T}|\Theta^s(z/t)|t^{-n}\,dz\,ds \\
& \leq \Mt f(x) \int_0^1 \int_\subRn \left(\frac{1}{s}+\frac{|z|}{st}\right)^{T}|\Theta^s(z/t)|t^{-n}\,dz\,ds \\
& \leq \Mt f(x) \int_0^1 s^{-T} \int_\subRn
(1+|w|)^T|\Theta^s(w)|\,dw\,ds \\
& \leq C(\Phi,n) \Mt f(x)\|\Psi\|_{T+n+1,T+1} \\
& \leq C(\Phi,n) \Mt f(x).
\end{align*}
Given this pointwise inequality, we immediately get
inequality~\eqref{eqn:Step-a}.

\subsubsection*{Proof of inequality~\eqref{eqn:Step-cd}}
Our proof is adapted
from~\cite[Lemma~3.1]{MR1342077}.    Fix $x,\,y\in \R^n$ and $t>0$.
Then for all $z\in B(x-y,t)$,
\[ |f*\Phi_t(x-y)| \leq \Mone f(z).  \]
Let $q=n/T>0$.  Since $B(x-y,t) \subset B(x,|y|+t)$, we have that
\begin{multline*}
|f*\Phi_t(x-y)|^q
 \leq \avgint _{B(x-y,t)} \Mone f(z)^q\,dz \\
 \leq \frac{B(x,|y|+t)}{B(x-y,t)|} \avgint _{B(x,|y|+t)} \Mone
f(z)^q\,dz 
 \leq \left(1+\frac{|y|}{t}\right)^n M(\Mone(f)^q)(x).
\end{multline*}
If we rearrange terms, then by our choice of $q$ we have that
\[ \left|f*\Phi_t(x-y)\left(1+\frac{|y|}{t}\right)^{-T}\right|^q \leq
M(\Mone(f)^q)(x). \]
Taking the supremum over all $y$ and $t$ we get that
\[ \Mt f(x)^q \leq M(\Mone(f)^q)(x). \]
Therefore, by Lemmas~\ref{lemma:homog-exp} and~\ref{lemma:max-up}, 
since $\pp\in \MP$ and $q=n/T<p_0$,
\begin{multline*}
 \|\Mt f\|_\pp = \|(\Mt f)^q\|_{\pp/q}^{1/q} \leq
\|M(\Mone(f)^q)\|_{\pp/q}^{1/q} \\
\leq C(\pp,p_0,n,q) \|\Mone(f)^q\|_{\pp/q}^{1/q} = C(\pp,p_0,n,q)\|\Mone f\|_\pp.
\end{multline*}
Since $q$ depends on $n$ and $T$, this gives us inequality~\eqref{eqn:Step-cd}.

\subsubsection*{Proof of inequality~\eqref{eqn:Step-e}}
As we remarked above, we first assume that $\Mone f \in \Lp$.  Our
argument is very similar to that in Stein~\cite[pp.~95--98]{stein93}.

Let $\lambda > 0$ be some large number; the precise value will be
fixed below.  Define $F = F_{\lambda} = \{ x : \Mg f (x) \leq \lambda
\Mone f (x) \}$.
Then by inequalities~\eqref{eqn:Step-a} and~\eqref{eqn:Step-cd},
\[
   \| \Mone (f) \cdot \chi_{F^c} \|_{\pp}
        \leq \frac{1}{\lambda} \| \Mg (f) \chi_{F^c} \|_{\pp} 
\leq        \frac{1}{\lambda} \| \Mg (f) \|_{\pp}
\leq \frac{C_0}{\lambda} \| \Mone (f) \|_{\pp},
\]
where $C_0=C_0(n,\Phi,T,\pp,p_0)$.  Therefore, by
Lemma~\ref{lemma:minkowski-low} (if $p_-<1$; the other case is treated similarly),
\begin{multline*}
    \| \Mone f \|_{\pp}^{p_-}
        \leq \| \Mone (f) \cdot \chi_{F} \|_{\pp}^{p_-} + \| \Mone (f) \cdot \chi_{F^c} \|_{\pp}^{p_-} \\
        \leq \| \Mone (f) \cdot \chi_F \|_{\pp}^{p_-} + \left( \frac{C_0}{\lambda} \right)^{p_-} \| \Mone f \|_{\pp}^{p_-}.
\end{multline*}
Fix $\lambda = 2^{1/p_-}C_0$;  since we assumed that $\Mone f \in
L^{\pp}$, we can rearrange terms to get
    \[  \| \Mone f \|_{\pp} \leq 2 \| \Mone (f) \cdot \chi_F
    \|_{\pp}.   \]

    To estimate the right-hand side, we will use the fact that there exists $c = c(p_0,\Phi,n,N,\lambda)$ such that for all
    $x\in F$,
\begin{equation}    \label{Mone-MHL}
    \Mone f (x) \leq c M ((\Mr f)^{p_0}) (x) ^{1/p_0}.
  \end{equation}
(See~\cite[p.~96]{stein93}.)   Then again by
Lemma~\ref{lemma:homog-exp} and since $\pp\in \MP$,
\begin{multline*}
    \| \Mone f \cdot \chi_F \|_{\pp}
        \leq c \| (M((\Mr f)^{p_0}) )^{1/p_0} \|_{\pp} \\
        \leq \|M((\Mr f)^{p_0}) \|_{\pp/p_0}^{1/p_0}
        \leq C \| (\Mr f)^{p_0} \|_{\pp/p_0}^{1/p_0} = C\| \Mr f \|_{\pp},
\end{multline*}
where $C=C(p_0,\Phi,n,N,\lambda,\pp,p_0)$.  This completes the proof of
inequality~\eqref{eqn:Step-e} with the {\em a priori} assumption that
$\Mone f \in \Lp$.

\medskip

\subsection{Proof that $\Mone f\in \Lp$}
\label{sec:apriori}

To show that this {\em a priori} assumption holds, we adapt the argument
briefly sketched in~\cite[p.~97]{stein93}. 
Fix a tempered distribution $f$ such that $\Mr f\in \Lp$.  We define  truncated versions of the operators
used in the previous section.  
Hereafter, $\epsilon>0$ will be a positive parameter that will
tend to $0$, and $L>0$ will be a constant that will depend on $f$ but
will be independent of $\epsilon$.  Define
\begin{align*}
\TMr 
       & = \sup_{t\geq 0} |f\ast \Phi_t(x)| \frac{t^L}{(t + \epsilon + \epsilon|x|)^L}, \\
    \TMg f(x)
        &= \sup_{\Phi \in S_N} M_{\Phi, 0}^{\epsilon, L} f(x), \\
    \TMone f(x)
        &= \sup_{\substack{0 < t < 1/\epsilon\\ |x - y| < t}} |f \ast \Phi_t (y)| \frac{t^L}{(t + \epsilon + \epsilon|y|)^L} \\
    \TMt f(x)
        &= \sup_{\substack{y \in \R^n\\ 0 < t < 1/\epsilon}}
|\Phi_t \ast f(x - y)| \left( 1 + \frac{|y|}{t} \right)^{-T} \frac{t^L}{(t + \epsilon + \epsilon|x - y|)^L}.
\end{align*}
For every $t$ and $L$, $ \frac{t^L}{(t + \epsilon + \epsilon |x|)^L}$
increases to $1$ as $\epsilon\rightarrow 0$; in particular  $\TMone f$
increases pointwise to $\Mone f$.

Our proof proceeds as follows.  We start by showing that there exists
$L=L(f,n,p_0)>0$ such that for every $\epsilon$, $0<\epsilon<1/2$,
$\TMone f \in \Lp$.  Next we will prove three inequalities.  First, we
will show that there exists $N= T+L+n+1$ such that for all
$\epsilon>0$, 
\begin{equation} \label{eqn:Step-a1}
\| \TMg f\|_\pp \leq C(\phi,n)\|\TMt f\|_\pp.
\end{equation}
We will then show that
\begin{equation} \label{eqn:Step-cd1}
 \|\TMt f\|_\pp \leq C(n,T,\pp,p_0)\|\TMone f \|_\pp.
\end{equation}
Finally, we will show that if
\[ x \in F = F_\lambda^{\epsilon,L} = \{ x : \TMg f(x) < \lambda
\TMone f(x) \}, \]
then
\begin{equation} \label{eqn:Step-e1}
 \TMone f(x) \leq C(p_0,\Phi,n,N,L,\lambda)M(\Mr(f)^{p_0})(x)^{1/p_0}.
\end{equation}

We can then repeat the argument used to prove
inequality~\eqref{eqn:Step-e}.  First,
using~\eqref{eqn:Step-a1}, \eqref{eqn:Step-cd1} and the fact that
$\TMone\in \Lp$, we show that there exists
$\lambda=\lambda(\Phi,n,T,\pp,p_0)$ such that 
\[ \|\TMone f\|_\pp \leq 2\|\Mone(f) \chi_F\|_\pp. \]
Then we can use \eqref{eqn:Step-e1} to show that 
\[ \|\TMone f\|_\pp \leq C\|\Mr f\|_\pp < \infty. \]
The constant $C$ is independent of $\epsilon$, and so by Fatou's lemma, we get that
\[ \|\Mone f\|_\pp \leq  C(f)\|\Mr f\|_\pp < \infty. \]
This completes the proof.

\subsubsection*{Construction of the constant $L=L(f)$}

 Since $f\in \Ss'$, it is a
continuous linear functional on $\Ss$.   In particular,  arguing as in Folland~\cite[Proposition~9.10]{MR1681462}, we have
that  there exists $m>0$ (depending only on $f$) such that
\[ |f*\Phi_t(y)| \leq C(\Phi,f) \left(1+\frac{|y|}{t}\right)^m.  \]
Assume $L>2m$.  If $x$, $y$ and $t$ are such that $|x-y|<t<1/\epsilon$, then we
have that
\[
 \left(1+\frac{|y|}{t}\right)^m\frac{t^L}{(t+\epsilon+\epsilon|y|)^L}
\leq \epsilon^{-L}\left(\frac{1}{\epsilon}+\frac{|y|}{t}\right)^{m-L} \leq
 \epsilon^{-L}\left(\frac{1}{\epsilon}+\frac{|y|}{t}\right)^{-L/2}. \]
But by the triangle inequality,
\[ 1+\epsilon|x| < 1+\frac{|x|}{t} < 2+\frac{|y|}{t} <
\frac{1}{\epsilon}+\frac{|y|}{t}.  \]
Combining these inequalities we get that
\[ |f*\Phi_t(y)| \frac{t^L}{(t+\epsilon+\epsilon|y|)^L}  \leq
 \epsilon^{-L} C(\Phi,f) (1+\epsilon|x|)^{-L/2}. \]
Fix $x$ and take the supremum over all such $y$ and $t$; this shows
that
\[ \TMone f(x) \leq \epsilon^{-L} C(\Phi,f) (1+\epsilon|x|)^{-L/2}. \]
Finally, recall that
\[ (1+\epsilon|x|)^{-n} \leq \epsilon^{-n}(1+|x|)^{-n} 
\leq \epsilon^{-n}C(n) M(\chi_{B(0,1)})(x); \]
hence,
\[ \TMone f(x) \leq \epsilon^{-3L/2} C(\Phi,f,n)
M(\chi_{B(0,1)})(x)^{\frac{L}{2n}}. \]
Fix  $L$ so that $\frac{L}{2n}>\frac{1}{p_0}$.  Since $\pp\in \MP$, by Lemmas~\ref{lemma:homog-exp} and~\ref{lemma:max-up},
\[ \|\TMone f\|_\pp \leq
C\|M(\chi_{B(0,1)}\|_{\frac{L}{2n}\pp}^{\frac{L}{2n}} \leq
C\|\chi_B(0,1)\|_{\frac{L}{2n}\pp}^{\frac{L}{2n}}<\infty, \]
where $C=C(\Phi,f,n,\epsilon,L,\pp,p_0)$.Even though this
constant depends on $\epsilon$ it does not effect the above argument,
which only used the qualitative fact that $\|\TMone f\|_\pp<\infty$.

\subsubsection*{Proof of inequality~\eqref{eqn:Step-a1}}

We begin with an auxiliary estimate.  Fix $s$, $0<s<1$ and as we did
above, assume that $\epsilon<1/2$.  Then we have that
\begin{multline*}
 \left( \frac{(s t + \epsilon + \epsilon |x - z|)^L}{(s t)^L} \right) \cdot
\frac{t^L}{(t + \epsilon + \epsilon |x|)^L}   
 = \left( \frac{ s t + \epsilon + \epsilon |x - z|}{s} 
\cdot \frac{1}{t + \epsilon + \epsilon |x|} \right)^L \\
\leq s^{-L} \left( \frac{t + \epsilon + \epsilon |x|}{t + \epsilon +
    \epsilon |x|}
+ \frac{\epsilon|z|}{t + \epsilon + \epsilon |x|} \right)^L 
\leq\left( \frac{1}{s} + \frac{|z|}{st} \right)^L.
\end{multline*}

Given this estimate we can argue exactly as in proof of
inequality~\eqref{eqn:Step-a1}:  if $N\geq T+L+n+1$ and
$\Psi\in \Ss_N$, then we get that
\begin{align*}
& |f*\Psi_t(x)|\frac{t^L}{(t+\epsilon+\epsilon|x|)^L} \\
& \qquad \leq \int_0^1 \int_\subRn
|f*\Psi_{st}(x-z)|\frac{t^L}{(t+\epsilon+\epsilon|x|)^L}
\frac{(st)^L}{(st +\epsilon+\epsilon|x-z|)^L} \\
& \qquad \qquad  \times
\left(1+\frac{|z|}{st}\right)^{-T} \frac{(st +\epsilon+\epsilon|x-z|)^L}{(st)^L}
\left(1+\frac{|z|}{st}\right)^{T} \Theta^s(z/t)t^{-n}\,dz\,ds \\
& \qquad \leq \TMt f(x) \int_0^1 \int_\subRn
\left(\frac{1}{s}+\frac{|z|}{st}\right)^{L+T}
\Theta^s(z/t)t^{-n}\,dz\,ds \\
& \qquad \leq C(\Phi,n) \TMt f(x)  \|\Psi\|_{T+L+n+1,T+L+1} \\
& \qquad \leq C(\Phi,n) \TMt f(x).
\end{align*}
The desired inequality follows immediately.

\subsubsection*{Proof of inequality~\eqref{eqn:Step-cd1}}

This proof is a straightforward modification of the proof of
inequality~\eqref{eqn:Step-cd}.   As before, for all $x,\,y\in \R^n$,
$t>0$ and $\epsilon>0$, we have that
\[ |\Phi_t*f(x-y)| \frac{t^L}{(t+\epsilon+\epsilon|x-y|)^L} \leq
\TMone f(z) \]
for all $x\in B(x-y,t)$.  The proof now proceeds as before.

\subsubsection*{Proof of inequality~\eqref{eqn:Step-e1}}

Fix $ x \in F = \{ x : \TMg f(x) < \lambda
\TMone f(x) \}$.  Then by the definition of the truncated maximal
operator, there exists $(y, t)$  with $t <1/\epsilon$ and $|x - y| <
t$, such that
\begin{equation} \label{eqn:saturate}
\TMone f (x) \leq 2\,|f \ast \Phi_t (y)| \left(
  \frac{t^L}{(t + \epsilon + \epsilon |y|)^L} \right).
\end{equation}
Let $r>0$ be small; its precise value will be fixed below. If $x' \in
B(y, rt)$, then by the Mean Value Theorem,
\[ |f(x', t) - f(y, t)| \leq rt \sup_{|z - y| < rt} |\nabla_z f(z,
t)|,\]
where for brevity we write $f(y,t)=\Phi_t*f(y)$.  

Inequality \eqref{eqn:Step-e1} follows if we can prove that there
exists $c = c(N, L,n,\Phi)$ such that
\begin{align}        \label{Grad}
        t \sup_{|z - y| < rt} |\nabla_z f(z, t)| 
\leq c \TMg f (x) \cdot \frac{(t + \epsilon + \epsilon |y|)^L}{t^L}.
\end{align}
For if \eqref{Grad} holds, then for $x \in F$, and $x' \in B(y,rt)$,
\[
        |f(x', t) - f(y, t)|
            \leq c r\TMg f (x) \cdot \frac{(t + \epsilon + \epsilon |y|)^L}{t^L} 
            \leq c r \lambda \TMone f (x) \cdot \frac{(t + \epsilon + \epsilon |y|)^L}{t^L}.
\]
Now fix $r = r(N,L,n,\Phi, \lambda)$ so small that $cr\lambda \leq 1/4$. Then we have
\begin{multline*}
        |f(x', t)|
            \geq \left| \ |f(y, t)| - c r\lambda \TMone f (x) 
\cdot \frac{(t + \epsilon + \epsilon |y|)^L}{t^L} \right| \\
            \geq \left( \frac{1}{2} - c r\lambda \right) \TMone f (x)
            \geq \frac{1}{4} \TMone f(x).
        \end{multline*}
We can now get \eqref{eqn:Step-e1} by taking the average over all such
points $x'$:
        \begin{align*}
        \TMone f (x)^{p_0}
            &\leq 4^{p_0} |f(x', t)|^{p_0}\\
            & = 4^{p_0} \frac{1}{|B(y, rt)|} \int_{B(y, rt)} |f(x', t)|^{p_0} dx' \\
            &\leq 4^{p_0} \left(\frac{r + 1}{r} \right)^n \frac{1}{|B(x, (1 + r)t)|} \int_{B(x, (1 + r)t)} |f(x', t)|^{p_0} dx' \\
            &\leq c(p_0, r, n) \frac{1}{|B(x, (1 + r)t)|} \int_{B(x, (1 + r)t)} \Mr f (x')^{p_0} dx' \\
            &\leq c(p_0, r, n) M (\Mr (f)^{p_0}) (x).
        \end{align*}

 \medskip 

 To complete the proof it remains to show \eqref{Grad}. We begin with
 some notation: if we set $\Phi^{(i)}= \frac{\p \Phi}{\p z_i}$, and
 $\Phi_t^{(i)} (z) = (\Phi^{(i)})_t (z)$, then $\frac{\p}{\p z_i}
 (\Phi_t) (z) = \frac{1}{t} \Phi_t^{(i)} (z)$. Since $f \ast \Phi \in
 C^{\infty}$ whenever $f \in \mathcal{S}'$ and $\Phi \in \mathcal{S}$,
 differentiating the convolution gives
\[         \frac{\p}{\p z_i} [f(z, t)] = f \ast \frac{\p}{\p z_i}(\Phi_t)
        (z) = \frac{1}{t} f \ast \Phi^{(i)}_t (z).  \]
Hence, we can rewrite the gradient term as
        \begin{align*}
        |t \nabla_z (f) (z, t)|
            &= \left( \sum_{i = 1}^n |f \ast \Phi_t^{(i)} (z)|^2 \right)^{1/2}.
        \end{align*}
        We multiply and divide the left-hand side by the terms needed
        to obtain the truncated operator:
\[       
        t|\nabla_z f (z, t)| 
            = \underbrace{t|\nabla_z f (z, t)| \cdot \frac{t^L}{(t +
                \epsilon + \epsilon |z|)^L}}_{S(z, t)}
\cdot \underbrace{ \left( \frac{(t + \epsilon + \epsilon |z|)^L}{(t +
      \epsilon + \epsilon |y|)^L} \right)}_{R(z, y)^L}
\cdot \left( \frac{(t + \epsilon + \epsilon |y|)^L}{t^L} \right).
\]
Recall that we have fixed $x \in \R^n$ and  $(y, t)$ so that
\eqref{eqn:saturate} holds, and fixed $z \in B(y, rt)$.  Without loss of
generality, we may assume that $r\leq 1$.    We first estimate
$S(z,t)$:
 \begin{multline*}
S(z, t) = |t \nabla_z f(z, t)| \cdot \frac{t^L}{(t + \epsilon + \epsilon |z|)^L} 
 = \left( \sum_{i = 1}^n |f \ast \Phi_t^{(i)} (z)|^2 \right)^{1/2} 
\cdot \frac{t^L}{(t + \epsilon + \epsilon |z|)^L} \\
 = \left( \sum_{i = 1}^n \left| f \ast \Phi_t^{(i)} (z)
                \frac{t^L}{(t + \epsilon + \epsilon |z|)^L} \right|^2
            \right)^{1/2} \\
 \leq C(N,\Phi)\left(\sum_{i = 1}^n |M_{\Phi^{(t)}, 1}^{\epsilon, L} f (x)|^2
\right)^{1/2} 
\leq c(N, \Phi,n) \TMg f(x).
\end{multline*}
To see the first inequality, define the set of functions $\Psi=\Psi^{i,h}$
by $\Psi^{i,h}(x)= \Phi^{(i)}(x+h)$, $1\leq i \leq n$, $|h|\leq 2$.
Since $z=x+th$ for some $h$ such that $|h|\leq 1+r \leq 2$, we have
that $f*\Phi_t^{i}(z)=f*\Psi^{i,h}_t(z)$.  Moreover, since the
collection of functions $\Psi^{i,h}$ is sequentially compact in $\Ss$,
there exists a constant $c=c(\Phi,N)$ such that
$\|\Psi^{i,h}\|_{\alpha,\beta}\leq c$, $|\alpha|,\,|\beta|\leq N$.
Hence, $c^{-1}\Psi^{i,h}\in \Ss_N$ and the desired inequality follows.

\smallskip

To estimate $R(z, y)$ we note that if $z \in B(y, rt)$, then $|z| <
|y| + rt$. Then, since we may assume that  $\epsilon, r < 1$,
\[    R(z, y) \leq \left( \frac{t + \epsilon + \epsilon(|y| + rt)}{t +
                \epsilon + \epsilon |y|} \right) 
= 1 + \frac{\epsilon rt}{t + \epsilon + \epsilon |y|}
 \leq 1 + \frac{\epsilon r t}{t} = 1 + \epsilon r \leq 2.
\]
    Taking the supremum over $z$, we get
        \[ \sup_{|z - y| < rt} t |\nabla_z f(z, t)| \leq C 2^L \TMg f (x) \cdot \frac{(t + \epsilon + \epsilon |y|)^L}{t^L},
        \]
where $C=C(N,\Phi,n)$.  This gives us \eqref{Grad}.
\end{proof}

\subsection{The implication $(2)\Rightarrow(1)$}
\label{subsec:21}

Given any $\Phi \in \Ss$, there exists $c=c(\Phi)$ such that $c\Phi
\in \Ss_N$.  Therefore, the radial maximal operator is always
dominated pointwise by a constant multiple of the grand maximal
operator; hence, 
\begin{equation} \label{eqn:2imp1}
\|\Mr f\|_\pp \leq C(\Phi)\|\Mg f\|_\pp.
\end{equation}

\subsection{The implication $(2) \Rightarrow (3) \Rightarrow (1)$}
\label{subsec:2-3}

Suppose first that $(2)$ holds.  Fix $f\in \Hp$ and let $\Phi \in
\Ss$; then by \eqref{eqn:Step-e} and \eqref{eqn:2imp1}, $\Mone f
\in \Lp$.  Moreover, for all $x\in \R^n$,
\begin{multline*}
|f*\Phi(x)|^{p_0} \leq \inf_{|x-y|\leq 1} \Mone f(y)^{p_0}
\leq C(n)\int_{B(x,1)} \Mone f(y)^{p_0}\,dy \\
\leq C(n,\pp,p_0) \|(\Mone
f)^{p_0}\|_{\pp/p_0}\|\chi_{B(x,1)}\|_{(\pp/p_0)'}.
\end{multline*}
By Lemma~\ref{lemma:imbed}, if $q = \esssup_x (p(x)/p_0)'$, then
\[ \|\chi_{B(x,1)}\|_{(\pp/p_0)'} \leq (1+|B(x,1)|)
\|\chi_{B(x,1)}\|_q \leq C(n,\pp,p_0). \]
Therefore, $f*\Phi \in L^\infty$; since this is the case for every
$\Phi\in \Ss$, $f$ is a bounded distribution.

To show that $\Mp f\in \Lp$, we use the fact \cite[p.~98]{stein93} that
the Poisson kernel can be written as
\[ P(x) = \sum_{k=0}^\infty 2^{-k} \Phi_{2^k}^k(x), \]
where $\{\Phi^k\}$ is a family functions in
$\Ss$ with uniformly bounded seminorms.  Fix $x$ and $y$ such that $|x-y|<t$.  Then
\[ |f*P_t(y)| \leq \sum_{k=0}^\infty 2^{-k} |f*\Phi^k_{2^kt}(y)| \leq
\sum_{k=0}^\infty 2^{-k} M_{\Phi^k,1}f(x). \]
Taking the supremum over all such $y$ and $t$ we get that $\Mp f(x)$
is dominated by the right-hand side.  Since the functions $\Phi^k$ are
uniformly bounded, by the same argument as we used to
prove~\eqref{eqn:2imp1} we have that this inequality holds for
$\Phi^k$ with a constant independent of $k$.  Therefore, if $p_1\leq
1$, by
Lemma~\ref{lemma:minkowski-low} and~\eqref{eqn:Step-e},
\[ \|Nf\|_\pp^{p_-} \leq \sum_{k=0}^\infty 2^{kp_-}\|M_{\Phi^k,1}f\|_\pp^{p_-}
\leq C(n,\pp,p_0)\|\Mg f\|_\pp^{p_-}. \]
If $p_->1$, the same argument holds if we omit $p_-$ and
use Minkowski's inequality. 

\smallskip

Now suppose that $(3)$ holds.  Then there exists $\Phi\in
\Ss$, $\int \Phi(x)\,dx=1$ such that $\Mr f(x) \leq c\Mp f(x)$
(see~\cite[p.~99]{stein93}).   Condition $(1)$ follows immediately.

\section{Density of $L^1$ in $\Hp$}
\label{section:density}

To prove the atomic decomposition we need two facts
about variable Hardy spaces that are of interest in their own right.

\begin{prop} \label{prop:complete}
 Given  $\pp \in \MP$, $H^{\pp}$ is complete with respect to $\|\cdot\|_{H^\pp}$.
\end{prop}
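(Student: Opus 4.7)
The plan is to prove completeness via a standard two-step procedure: first show that $H^\pp$ embeds continuously into $\Ss'$, so that any $\|\cdot\|_{H^\pp}$-Cauchy sequence automatically has a distributional limit $f$; then invoke a Fatou-type inequality for $\Lp$ to show that $\|f_k - f\|_{H^\pp} \to 0$, which also forces $f \in H^\pp$.

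For the embedding, I would recycle the computation already carried out in Section~\ref{subsec:2-3}: for $\Phi \in \Ss$,
\[ |f*\Phi(x)|^{p_0} \leq C(n,\pp,p_0)\,\|(\Mone f)^{p_0}\|_{\pp/p_0}\,\|\chi_{B(x,1)}\|_{(\pp/p_0)'}. \]
Lemma~\ref{lemma:imbed} bounds $\|\chi_{B(x,1)}\|_{(\pp/p_0)'}$ by a constant independent of $x$, and Lemma~\ref{lemma:homog-exp} together with inequalities~\eqref{eqn:Step-e} and~\eqref{eqn:2imp1} converts the middle factor into $\|\Mone f\|_\pp^{p_0} \lesssim \|f\|_{H^\pp}^{p_0}$. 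Taking the supremum over $x$ gives
\[ \|f*\Phi\|_\infty \leq C(\Phi,n,\pp,p_0)\,\|f\|_{H^\pp}. \]
Since the action of $f$ on $\phi \in \Ss$ equals $(f*\widetilde\phi)(0)$ with $\widetilde\phi(x)=\phi(-x)$, this estimate implies that a Cauchy sequence $\{f_k\} \subset H^\pp$ is also Cauchy in $\Ss'$, and hence converges to some $f \in \Ss'$.

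For the second step I would establish the pointwise bound
\[ \Mg(f_k - f)(x) \leq \liminf_{j \to \infty} \Mg(f_k - f_j)(x). \]
Indeed, for each fixed $\Phi \in \Ss_N$, $t>0$, and $x \in \R^n$, the function $y \mapsto t^{-n}\Phi((x-y)/t)$ lies in $\Ss$, so distributional convergence gives $(f_k-f_j)*\Phi_t(x) \to (f_k-f)*\Phi_t(x)$; bounding the absolute value by $\Mg(f_k-f_j)(x)$, passing to $\liminf$ in $j$, and then taking the supremum in $(\Phi,t)$ on the left yields the claim.

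Finally, I would use the Fatou property of $\Lp$: for any sequence of non-negative measurable functions $g_j$,
\[ \|\liminf_j g_j\|_\pp \leq \liminf_j \|g_j\|_\pp, \]
which follows from Lemma~\ref{lemma:monotone} applied to the increasing sequence $h_m = \inf_{j \geq m} g_j$ (one has $\|h_m\|_\pp \leq \inf_{j\geq m}\|g_j\|_\pp$ by monotonicity, then pass to the limit in $m$). Combined with the pointwise estimate above,
\[ \|f_k-f\|_{H^\pp} \leq \liminf_j \|f_k - f_j\|_{H^\pp}, \]
and the right-hand side tends to zero by the Cauchy property. That $f \in H^\pp$ then follows from Lemma~\ref{lemma:minkowski-low} (when $p_- \leq 1$) or the ordinary triangle inequality (when $p_- > 1$). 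The only delicate ingredients are the uniform bound on $\|\chi_{B(x,1)}\|_{(\pp/p_0)'}$ and the Fatou property for $\Lp$, both of which are essentially routine from the preliminaries, so no step presents a serious obstacle beyond careful bookkeeping with the quasi-norm.
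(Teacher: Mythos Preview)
Your proof is correct, but it takes a different route from the paper's. Both arguments begin the same way: they establish the continuous embedding $H^\pp \hookrightarrow \Ss'$ via the estimate $|\langle f,\Phi\rangle|^{p_0} \lesssim \|\Mone f\|_\pp^{p_0}$ (the paper evaluates at the origin, you take a sup over $x$, but the computation is identical). From there the two proofs diverge. The paper invokes the Riesz--Fisher criterion: it is enough to show that whenever $\sum_k \|f_k\|_{H^\pp}^{p_-}<\infty$ the series $\sum_k f_k$ converges in $H^\pp$; this is checked using countable sublinearity of $\Mg$ together with Lemma~\ref{lemma:minkowski-low} and monotone convergence. You instead work directly with an arbitrary Cauchy sequence, prove the pointwise Fatou-type bound $\Mg(f_k-f)\leq \liminf_j \Mg(f_k-f_j)$, and then appeal to the Fatou property of $\Lp$ (which you correctly derive from Lemma~\ref{lemma:monotone}). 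Your route is slightly more transparent in that it never leaves the Cauchy sequence and makes the lower-semicontinuity step explicit, whereas the paper's Riesz--Fisher argument is a standard shortcut that packages the same ingredients more tersely. Neither approach has any real advantage in generality; they are simply two standard ways to prove completeness of a quasi-normed function space once the embedding into $\Ss'$ is in hand.
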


\begin{proof}
  First, if a sequence $\{f_k\}$ converges in $H^{\pp}$ with respect
  $\|\cdot\|_{H^\pp}$, then it converges in $\Ss'$.  To see this, fix $\Phi\in
  \Ss$; then
\begin{multline*}
| \langle f, \Phi \rangle |^{p_0} = |f*\Phi(0)|^{p_0}
 \leq \inf_{|y|\leq 1} \Mone f(y) ^{p_0} 
 \leq C(n)\avgint_{B(0,1)} \Mone f(y) ^{p_0}\,dy  \\
 \leq C(n,\pp,p_0) \|\Mone f\|_\pp ^{p_0}
\|\chi_{B(0,1)}\|_{(\pp/p_0)'} 
\leq C(n,\pp,p_0) \|\Mone f\|_\pp ^{p_0} .
\end{multline*}
Our assertion follows at once.

To show that $H^\pp$ is complete we will consider the case $p_-\leq
1$; the case $p_->1$ is proved in essentially the same way.  It will
suffice to show that $H^\pp$  has the Riesz-Fisher property: given any
sequence $\{f_k\}$ in $H^{\pp}$ such that
\begin{equation} \label{eqn:complete1}
 \sum_k \|f_k\|_{H^\pp}^{p_-} < \infty,
\end{equation}
the series $\sum f_k$
converges in $H^\pp$.  (Cf.~Bennett and
Sharpley~\cite{bennett-sharpley88}.  The argument there is for normed
spaces but holds for quasi-norms with the
introduction of the exponent $p_-$ in \eqref{eqn:complete1}.)  Let
\[ F_j = \sum_{k=1}^j f_k; \]
then by Lemma~\ref{lemma:minkowski-low} and \eqref{eqn:complete1}, the
sequence $\{F_j\}$ is Cauchy in $H^\pp$ and so in $\Ss'$.  Therefore, it
 converges in $\Ss'$ to a tempered distribution $f$.   Moreover, we have that
\[ \|f\|_{H^\pp}^{p_-} = \big\|\sum_k f_k\big\|_{H^\pp}^{p_-} \leq
\sum_k \|f_k\|_{H^\pp}^{p_-} < \infty, \]
and so $f\in H^\pp$.  Finally,
\[ \|f-F_j \|_{H^\pp}^{p_-} \leq \sum_{k\geq j+1} \|f_k\|_{H^\pp}^{p_-}, \]
and since the right-hand side tends to $0$ as $j\rightarrow \infty$,
the series converges to $f$ in $H^\pp$.
\end{proof}
	
\medskip

\begin{prop} \label{Prop-Density}
Given  $\pp \in \MP$,  $H^{\pp} \cap L_{loc}^1$ is dense in $H^{\pp}$.
\end{prop}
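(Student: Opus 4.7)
The plan is to approximate $f \in H^\pp$ by its Poisson mollifications $f_t := P_t \ast f$ for $t > 0$ small. Since $f$ is a bounded distribution by Theorem~\ref{Thm-Max}, each $f_t$ lies in $L^\infty \subset L^1_{loc}$, so it suffices to show that $f_t \in H^\pp$ with a uniform norm bound and that $\|f - f_t\|_{H^\pp} \to 0$ as $t \to 0$. The uniform bound is immediate from the semigroup identity $P_s \ast P_t = P_{s+t}$: for $|y - x| < s$ we have $|P_s \ast f_t(y)| = |P_{s+t} \ast f(y)| \leq \Mp f(x)$, so $\Mp f_t(x) \leq \Mp f(x)$ pointwise, and Theorem~\ref{Thm-Max} gives $\|f_t\|_{H^\pp} \leq C\|f\|_{H^\pp}$.

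For the convergence step, set $u(y, s) = P_s \ast f(y)$. Then
\[ \Mp(f - f_t)(x) = \sup_{|y - x| < s} |u(y, s) - u(y, s + t)|, \]
and $\Mp(f - f_t)(x) \leq 2\Mp f(x)$ with $(\Mp f)^{p(\cdot)} \in L^1$ since $p_+ < \infty$. By Lemma~\ref{lemma:norm-mod} and the ordinary dominated convergence theorem applied to the modular integral, the desired $H^\pp$-convergence follows once we prove $\Mp(f - f_t)(x) \to 0$ for almost every $x$.

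To establish this pointwise convergence I would split the supremum at $s = \sqrt{t}$. In the regime $s \geq \sqrt{t}$, a direct calculation gives $\partial_r P_r = -r^{-1} Q_r$ for a kernel $Q$ with enough decay to ensure $|\partial_r u(y, r)| \leq C r^{-1} \Mp f(x)$ whenever $(y, r)$ lies in the cone at $x$; the mean value theorem then yields $|u(y, s) - u(y, s + t)| \leq C(t/s)\Mp f(x) \leq C\sqrt{t}\,\Mp f(x)$, which vanishes as $t \to 0$ at every point where $\Mp f(x) < \infty$, hence almost everywhere. The small-$s$ regime is the main obstacle: for $s < \sqrt{t}$, both $(y, s)$ and $(y, s + t)$ lie in a truncated non-tangential cone at $x$ of height at most $2\sqrt{t}$, and to control the oscillation there I would invoke the classical Fatou-type theorem that $u$ has a non-tangential boundary limit at almost every $x$ whenever $\Mp f$ is finite almost everywhere (a result of Calder\'on; see, e.g.,~\cite{stein93}). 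The oscillation on these shrinking cones then vanishes a.e., completing the argument.
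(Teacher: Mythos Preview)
Your argument is correct and shares the paper's skeleton: approximate by $P_t*f$, dominate $\Mp(f-f_t)\leq 2\Mp f$ to run dominated convergence on the modular via Lemma~\ref{lemma:norm-mod}, and invoke Calder\'on's theorem on non-tangential limits for the small-height part. The paper organizes the pointwise step differently: rather than splitting at $s=\sqrt{t}$ and using a gradient/mean-value estimate for large $s$, it fixes $x$ and shows that $s\mapsto u(x,s)$ extends continuously to $[0,\infty]$---the limit at $0$ coming from Calder\'on, the limit at $\infty$ from an explicit decay $|u(x,s)|^{p_0}\lesssim s^{-np_0/p_+}\|f\|_{H^\pp}^{p_0}$ obtained through Lemmas~\ref{lemma:kopaliani} and~\ref{lemma:norm-mod2}---and then concludes by uniform continuity of a one-variable function. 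Your route avoids that variable-exponent decay computation and treats the full non-tangential supremum directly; the paper's version is more self-contained but, as written, literally handles only the radial supremum $\sup_{s>0}|u(x,s+t)-u(x,s)|$.

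One point to tighten: the bound $|\partial_r u(y,r)|\leq Cr^{-1}\Mp f(x)$ does not follow from the kernel identity alone, since pointwise domination $|Q|\leq CP$ does not transfer to convolutions with a distribution. Either use interior gradient estimates for the harmonic function $u$, which naturally produce a wider-aperture non-tangential maximal function $\mathcal{N}_a f(x)$, or decompose $Q$ as in the proof of $(2)\Rightarrow(3)$ in Theorem~\ref{Thm-Max} to get $|Q_r*f(y)|\leq C\,\Mg f(x)$. Either replacement is finite a.e.\ (since $\pp\in\MP$ gives $\mathcal{N}_a f,\,\Mg f\in L^\pp$), so your conclusion survives, but the substitution should be made explicit.
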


\begin{proof}
  Given $f \in H^{\pp}$, by Theorem~\ref{Thm-Max},  $f$ is a bounded
  distribution.  Hence, for any $t > 0$, $f \ast P_t \in
  C^{\infty}\subset L_{loc}^1$.  Therefore, it will suffice to show $f
  \ast P_t \rightarrow f$ in $H^{\pp}$.   Again by
  Theorem~\ref{Thm-Max} it will be enough to show that as $t\rightarrow 0$,
\[ \|\Mp(f*P_t-f)\|_\pp \rightarrow 0. \]
Since $p_+<\infty$, by Lemma~\ref{lemma:norm-mod}
this will follow if
\begin{equation} \label{eqn:density1}
 \int_\subRn \Mp(f*P_t-f)(x)^{p(x)}\,dx \rightarrow 0.
\end{equation}
Since $P_s \ast P_t = P_{s + t}$, we immediately
  have
\[ \Mp(f*P_t-f)(x)^{p(x)}= \sup_{s > 0} |P_s \ast (P_t \ast f - f) (x)|^{p(x)} \leq 2^{p_+}\Mp f(x)^{p(x)} \in L^1. \]
Thus, \eqref{eqn:density1} follows from the dominated convergence
theorem if for almost every $x$,
 \begin{equation} \label{DCT-pw}
 \lim_{t \rightarrow 0} \left( \sup_{s > 0} |P_s \ast( P_t \ast f - f)(x)|\right) = 0 .
\end{equation}
To prove this, let  $u(x,s)=P_s*f(x)$.  Arguing as
above we have that  $\Mp u \in L^\pp$, and so $u(x,s)$ is
non-tangentially bounded almost everywhere.  Therefore, for almost every $x$,
\[ \lim_{s\rightarrow 0} u(x,s) \]
exists.  (See~\cite[Theorem~4.21]{garcia-cuerva-rubiodefrancia85}.)  Moreover, by
Lemmas~\ref{lemma:kopaliani} and \ref{lemma:norm-mod2} we have that for all $s$ large,
\begin{align*}
|u(x,s)|^{p_0}= |P_s * f(x)|^{p_0}
& \leq \avgint_{B(x,s)} \Mp f(y)^{p_0}\,dy \\
& \leq C(\pp) |B(x,s)|^{-1}\|Nf\|_\pp^{p_0} \|\chi_{B(x,s)}\|_{(\pp/p_0)'} \\
& \leq C(\pp,p_0) \|\chi_{B(x,s)}\|_{\pp/p_0}^{-1}\|Nf\|_\pp^{p_0} \\
& \leq C(\pp,p_0) |B(x,s)|^{-p_0/p_+}\|Nf\|_\pp^{p_0} \\
& \leq C(n,\pp,p_0) s^{-np_0/p_+}\|Nf\|_\pp^{p_0}.
\end{align*}
Thus,
\[ \lim_{s\rightarrow\infty} u(x,s)=0.  \]
These two limits, combined with the fact that $u$ is continuous, show
that for almost every $x$, $u(x,s)$ is uniformly continuous in $s$.
The limit~\eqref{DCT-pw} now follows immediately, and this completes
our proof.
\end{proof}

\section{The Calder\'on-Zygmund decomposition}
\label{section:CZ}

\begin{theorem} \label{Thm-CZ}
Given $\pp \in \MP$, fix  $f \in
L_{loc}^1 \cap H^\pp$.  For each $\lambda > 0$ define
$\Omega_{\lambda} = \{ x : \Mg f (x) > \lambda \}$.
Then there exists a set of cubes $\{Q_k^{\ast} \}$  such that
\begin{gather}
\Omega_{\lambda} = \bigcup_{k} Q_k^{\ast} \label{eqn:CZ1}\\
\intertext{and}
\sum_k \chi_{Q_k^*}(x) \leq C. \label{eqn:CZ2}
\end{gather}
Moreover, we can write $f=g + b$, where
$|g(x)| \leq c \lambda$, $b=\sum b_k$, $\supp(b_k)\subset
Q_k^*$, $\int b_k \ dx = 0$, and
\begin{equation}\label{Mr-Mg}
		\| \Mr b_k \|_{L^{\pp}(\R^n)} \leq C\| \Mg f \|_{L^{\pp}(Q_k^{\ast})}.
\end{equation}		
\end{theorem}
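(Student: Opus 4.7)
The plan is to perform a Whitney-type decomposition of $\Omega_\lambda$ and transplant the classical Hardy-space Calder\'on--Zygmund decomposition into the variable-exponent setting. Since $\Mg f$ is lower semicontinuous, $\Omega_\lambda$ is open; we may assume $\Omega_\lambda \neq \subRn$. Applying the Whitney decomposition, one obtains disjoint dyadic cubes $\{Q_k\}$ with $\bigcup_k Q_k = \Omega_\lambda$ and $\diam(Q_k) \leq \dist(Q_k, \Omega_\lambda^c) \leq 4 \diam(Q_k)$. Set $Q_k^* = \tfrac{9}{8} Q_k$; the standard Whitney arguments yield \eqref{eqn:CZ1}, \eqref{eqn:CZ2}, and the bounded overlap of the further dilates $Q_k^{**} = 2 Q_k^*$. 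Choose a smooth partition of unity $\{\zeta_k\}$ subordinate to $\{Q_k^*\}$ with $\sum_k \zeta_k = \chi_{\Omega_\lambda}$, $0 \leq \zeta_k \leq 1$, and $|D^\alpha \zeta_k| \lesssim \ell(Q_k)^{-|\alpha|}$ for all $|\alpha| \leq N$.

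Set $c_k = (\int \zeta_k)^{-1} \int f\, \zeta_k\, dx$, which is well defined because $f \in L^1_{loc}$, and put $b_k = (f - c_k)\zeta_k$, $b = \sum_k b_k$, $g = f - b$. Then $\supp b_k \subset Q_k^*$ and $\int b_k\, dx = 0$ by construction. To prove $|g| \leq c\lambda$, I split into two cases. On $\Omega_\lambda^c$ one has $g = f$; at every Lebesgue point of $f$ the pointwise bound $|f(x)| \leq \Mg f(x) \leq \lambda$ follows by approximating the Dirac mass $\delta_x$ with a normalized non-negative bump belonging, after rescaling, to $\Ss_N$. On $\Omega_\lambda$ one has $g = \sum_k c_k \zeta_k$; to bound each $|c_k|$, pick $y_k \in \Omega_\lambda^c$ with $\dist(y_k, Q_k) \lesssim \ell(Q_k)$ (which exists by the Whitney geometry) and observe that $c_k$ can be rewritten as $f \ast \Phi^{(k)}_{t_k}(y_k)$ for a suitable translate/dilate $\Phi^{(k)}$ of $\zeta_k / \int \zeta_k$ that, after normalization, lies in $\Ss_N$. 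Hence $|c_k| \lesssim \Mg f(y_k) \leq \lambda$, and the bounded overlap of $\{\zeta_k\}$ gives $|g| \leq C\lambda$ a.e.

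For the main estimate \eqref{Mr-Mg}, split $\subRn = Q_k^{**} \cup (Q_k^{**})^c$. On $Q_k^{**}$, sublinearity of $\Mr$ together with $|c_k| \lesssim \lambda$ and $\|\zeta_k\|_\infty \leq 1$ yields the pointwise bound $\Mr b_k(x) \lesssim \Mg f(x) + \lambda$. Since $Q_k \subset \Omega_\lambda$ gives $\Mg f > \lambda$ on $Q_k$, the modular-norm comparison in Lemma~\ref{lemma:norm-mod2} produces $\lambda \|\chi_{Q_k}\|_\pp \leq \|\Mg f\|_{\Lp(Q_k^*)}$. The pointwise inequality $\chi_{Q_k^{**}} \leq c\, M(\chi_{Q_k})^{1/p_0}$, together with Lemmas~\ref{lemma:homog-exp}, \ref{lemma:max-up} and the hypothesis $\pp \in \MP$, gives $\|\chi_{Q_k^{**}}\|_\pp \lesssim \|\chi_{Q_k}\|_\pp$; thus $\|\Mr b_k\, \chi_{Q_k^{**}}\|_\pp \lesssim \|\Mg f\|_{\Lp(Q_k^*)}$. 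For $x \notin Q_k^{**}$, the vanishing mean of $b_k$ combined with a Taylor expansion of the kernel $\Phi_t(\cdot)$ around the center of $Q_k$ produces the pointwise decay
\[
\Mr b_k(x) \lesssim \lambda\, M(\chi_{Q_k})(x)^{\kappa}
\]
for some $\kappa > 1$ determined by the smoothness of $\Phi$. Applying Lemmas~\ref{lemma:homog-exp} and \ref{lemma:max-up} again, with the aid of $\pp \in \MP$, bounds the $\Lp$-norm of the right-hand side by $\lambda \|\chi_{Q_k}\|_\pp \lesssim \|\Mg f\|_{\Lp(Q_k^*)}$.

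The main technical obstacle is the off-diagonal estimate: in the classical $L^p$ theory one simply integrates the decay; here every bound must be routed through a rescaled exponent space, and the conversion of the pointwise decay $M(\chi_{Q_k})^{\kappa}$ into an $\Lp$-bound relies delicately on the boundedness of the maximal operator on $L^{\pp/p_0}$. This is why the hypothesis $\pp \in \MP$ cannot be dispensed with, and it is also what forces the Whitney parameter and the power $\kappa$ to be calibrated so that Lemma~\ref{lemma:max-up} applies throughout uniformly in $k$.
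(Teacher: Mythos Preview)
Your overall strategy matches the paper's, but there is a genuine gap in the off-diagonal estimate when $p_0$ is small.  You subtract only the scalar mean $c_k=(\int\zeta_k)^{-1}\int f\zeta_k$, so $b_k$ has exactly one vanishing moment.  With a single vanishing moment the Taylor-expansion argument yields, for $x\notin Q_k^{**}$,
\[
\Mr b_k(x)\;\lesssim\;\lambda\,\frac{\ell_k^{\,n+1}}{|x-x_k|^{\,n+1}}
\;\lesssim\;\lambda\,M(\chi_{Q_k})(x)^{(n+1)/n},
\]
and no better: the exponent $\kappa$ is \emph{not} ``determined by the smoothness of $\Phi$'' but by the number of vanishing moments of $b_k$.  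To pass from this pointwise bound to an $\Lp$-estimate via Lemma~\ref{lemma:max-up} you need $M$ bounded on $L^{\kappa\pp}$, which requires $\kappa>1/p_0$.  Thus your argument works only when $(n+1)/n>1/p_0$, i.e.\ $p_0>n/(n+1)$; for smaller $p_0$ the step fails.

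The paper fixes this by splitting into two cases according to $d=\lfloor n(1/p_0-1)\rfloor$.  When $d\ge1$ one replaces the scalar $c_k$ by the orthogonal projection of $f$ onto the polynomials of degree $\le d$ in $L^2(Q_k^*,\tilde\eta_k\,dx)$; the resulting $b_k$ then has vanishing moments up to order $d$, which upgrades the off-diagonal decay to $\ell_k^{\,n+d+1}/|x-x_k|^{\,n+d+1}$ and forces $\kappa=(n+d+1)/n>1/p_0$ by the definition of $d$.  Your on-diagonal estimate and the conversion $\lambda\|\chi_{Q_k}\|_\pp\le\|\Mg f\|_{\Lp(Q_k^*)}$ are fine, but you must incorporate this polynomial correction to cover the full range of $p_0$ allowed by $\pp\in\MP$.
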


\begin{proof}
  Our proof is adapted from the proof for constant exponents in
  Stein~\cite{stein93}.
Since $\Omega_{\lambda}$ is open, we can form the Whitney decomposition
of $\Omega_\lambda$.   This gives us a set of cubes $\{Q_k\}$ with mutually disjoint
  interiors. Further, if we let $x_k$ and $\ell_k$ be the
  center and side length of $Q_k$, then there exist constants  $1 < \tilde{a} < a^{*}$ such that
if  $\tilde{Q}_k = \tilde{a}Q_k$ and $Q_k^{*} = a^* Q_k$,
then $Q_k \subset \tilde{Q}_k \subset Q_k^*$ and \eqref{eqn:CZ1} and
\eqref{eqn:CZ2} hold.
Let $P_0 = [-1/2, 1/2]^n$ and let $\zeta$ be a
smooth function such that $\zeta \big|_{Q_0} = 1$ and $\zeta=0$
outside $\tilde{a} Q_0$. Define $\zeta_k (x) = \zeta(\frac{x -
  x_k}{\ell_k})$ and $\eta_k = \zeta_k / (\sum_j \zeta_j)$; then
$\{ \eta_k \}$ is a partition of unity for $\Omega_{\lambda}$ subordinate to
the cover $\{ \tilde{Q}_k \}$. Lastly, define $\tilde{\eta}_k =
\eta_k / (\int \eta_k dx)$.

\medskip
Let $d=\lfloor n(1/p_0-1)\rfloor$.  We first consider the case $d\leq 0$;  then $p_-> p_0 > \frac{n}{n + 1}$, and by
Lemma~\ref{lemma:max-up} the maximal operator is bounded on
$L^{\pp\frac{n+1}{n}}$.   Let $c_k = \langle f, \tilde{\eta}_k
\rangle$ and define $b_k = (f - c_k) \eta_k$, $b = \sum_k b_k$ and $g
= f - b$.  Then $\int b_k\,dx =0$.  Moreover (see Stein~\cite[pp.~102-3]{stein93}),
$|g(x)| \leq c \lambda$ and if  $x \in Q_k^*$,
 \begin{align}
 \Mr b_k (x) &\leq C \Mg f (x);	 \label{CZ-bk}  \\
\intertext{if $x\in \R^n \setminus Q_k^*$,}
 \Mr b_k (x) &\leq C\lambda \frac{\ell_k^{n + 1}}{|x - x_k|^{n + 1}}. \label{CZ-bk2}
\end{align}

It remains to prove \eqref{Mr-Mg}.  By Lemma~\ref{lemma:minkowski-low},
	\begin{align*}
	\| \Mr b_k \|_{L^{\pp} (\R^n)}^{p_-}
		&\leq \| \Mr b_k \chi_{Q_k^*} \|_{\pp}^{p_-} + 
\| \Mr b_k \chi_{^c Q_k^*} \|_{\pp}^{p_-} = I_1 + I_2.
\end{align*}
By \eqref{CZ-bk} we immediately get $I_1 \leq C\| \Mg f \chi_{Q_k^*}
\|_{\pp}^{p_-} = C\|M_N f\|_{\Lp(Q_k^*)}^{p_-}$.

To estimate $I_2$, let $B_0$ be the ball centered at
$x$ with radius $c_n |x - x_k|$, with $c_n$ a dimensional constant
such that $Q_k^{\ast} \subset B_0$. Then by the definition
of $M$,
\begin{equation} \label{eqn:maxest-far}
 M (\chi_{Q_k^*}) (x) \geq \frac{1}{|B_0|} \int_{B_0} \chi_{Q_k^*}
dx = \frac{|Q_k^*|}{|B_0|} \geq c(n) \frac{\ell_k^{n }}{|x - x_k|^{n }}.
\end{equation}
Therefore, by inequality~\eqref{CZ-bk2}, Lemma~\ref{lemma:homog-exp},
the boundedness of the maximal operator, and the fact that
$Q_k^*\subset E_\lambda$,
\begin{multline*}
I_2
= \| \Mr b_k \chi_{Q_k^*} \|_{\pp}^{p_-}
\leq C \lambda \left\| \frac{\ell_k^{n + 1}}{|x - x_k|^{n + 1}} 
\chi_{\R^n\setminus Q_k^*} \right\|_{\pp}^{p_-} 
\leq C \lambda \left\| M (\chi_{Q_k^*}) \chi_{\R^n\setminus Q_k^*} 
\right\|_{\pp \frac{n + 1}{n}}^{\frac{n + 1}{n}p_-} \\
\leq C \lambda \left\| \chi_{Q_k^*} \right\|_{\pp  \frac{n + 1}{n}}^{\frac{n + 1}{n}p_-} 
= C\lambda \| \chi_{Q_k^*} \|_{\pp}^{p_-} 
 \leq C \lambda \left\| \frac{\Mg f}{\lambda} \chi_{Q_k^*}
\right\|_{\pp}^{p_-} 
 = C \left\| \Mg f \chi_{Q_k^{\ast}} \right\|_{\pp}^{p_-}.
\end{multline*}

\bigskip

Now suppose $d\geq 1$;  We modify the above construction as follows. 
We have that the maximal operator is bounded on
$L^{\pp\frac{n+d+1}{n}}$.  Let
$\mathcal{H}_d$ be the space of polynomials of degree at most $d$,
considered as a subspace of the Hilbert space $L^2(Q_K^{\ast},
\tilde{\eta}_k dx)$.  Let  $c_k$ be the projection of $f$ onto
$\mathcal{H}_d$:  then for all $q\in \mathcal{H}_d$, 
$\langle f - c_k, q \eta_k \rangle = 0$.
We again define $b_k = (f - c_k) \eta_k$, $b=\sum b_k$, and $g=f-b$.
Then we have (see Stein~\cite[pp.~104-5]{stein93}) that $|g(x)|\leq
C\lambda$ and
\begin{align*}
	\Mr b_k (x) &\leq c \Mg f (x) \\	
\intertext{ if  $x \in Q_k^* $, and}
	\Mr b_k (x) &\leq c\lambda \frac{\ell_k^{n + d + 1}}{|x - x_k|^{n + d + 1}} 
\end{align*}
if $x\in \R^n\setminus Q_k^*$.  
We now repeat the argument above.   The estimate for $I_1$ is the same;
the estimate for $I_2$ is nearly so:
\begin{multline*}
	I_2 = \| \Mr b_k \cdot \chi_{Q_k^*} \|_{\pp}^{p_-}
	\leq C \lambda \left\| \frac{\ell_k^{n + d + 1}
          \chi_{^c Q_k^*}}{|x - x_n|^{n + d + 1}} \right\|_{\pp}^{p_-} \\
\leq C \lambda \left\| M (\chi_{Q_k^*}) \right\|_{\pp \frac{n + d + 1}{n}}^{\frac{n + d + 1}{n} p_-}
\leq C \lambda \left\| \chi_{Q_k^*} \right\|_{\pp  \frac{n + d +
    1}{n}}^{ \frac{n + d + 1}{n}p_-}
\leq C \left\| \Mg f \ \chi_{Q_k^{\ast}} \right\|_{\pp}^{p_-}.
\end{multline*}
This completes the proof of \eqref{Mr-Mg}.
\end{proof}

\section{The atomic decomposition:  $(\pp,\infty)$ atoms}
\label{section:p-infty-atoms}

We begin with the definition of atoms.   

\begin{definition}
    Given $\pp \in \MP$, and $q$, $1 < q \leq  \infty$, a
    function  $a(\cdot)$ is a $(\pp, q)$ atom if $\supp(a)\subset
    B=B(x_0,r)$ and it  satisfies
	\begin{enumerate}
	\item[(i)] $\| a \|_{q} \leq |B|^{\frac{1}{q}}\| \chi_B \|_{\pp}^{-1} $, \\
	\item[(ii)] $\ds \int a(x) x^{\alpha} dx = 0$ for all 
$\ds |\alpha| \leq \lfloor n ( p_0^{-1} - 1)\rfloor$.
	\end{enumerate}
In (i) we interpret $1/\infty=0$.   These two conditions are called the size and vanishing moments
conditions of atoms.
\end{definition}

\begin{remark}
If $p_0>1$ (which can happen if $p_->1$), then $\lfloor n ( p_0^{-1} -
1)\rfloor<0$, and we interpret this to mean that no vanishing moments
are required.
\end{remark}

\medskip

In the remainder of this section we consider the case $q=\infty$.

\begin{theorem}\label{Thm-Atomic}
 Suppose $\pp \in \MP$. Then a distribution $f$ is in $H^{\pp}(\R^n)$ if
  and only if there exists a collection $\{ a_j \}$ of $(\pp, \infty)$ atoms
  supported on balls $\{ B_j \}$, and non-negative coefficients $\{\lambda_j\}$
  such that
 \[ f = \sum_j \lambda_j a_j, \]
where the series converges in $H^\pp(\R^n)$.  Moreover,
\begin{equation} \label{eqn:atomic1}
\| f \|_{H^{\pp}} \simeq
\inf\left\{ \bigg\| \sum_j \lambda_j \frac{\chi_{B_j}}
{\| \chi_{B_j} \|_{\pp}} \bigg\|_{\pp} : f = \sum_j \lambda_j a_j \right\}.
\end{equation}
\end{theorem}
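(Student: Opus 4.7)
The plan is to prove the two implications separately. For the ``if'' direction, the key ingredients are a pointwise estimate on $\Mg a$ for a single atom, combined with the vector-valued maximal inequality of Lemma~\ref{lemma:max-vector}. For the ``only if'' direction, I would iterate the Calder\'on-Zygmund decomposition of Theorem~\ref{Thm-CZ} over dyadic heights $2^k$, $k\in\Z$, and assemble atoms from the resulting telescoping series.

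For the ``if'' direction, fix a $(\pp,\infty)$ atom $a$ supported on a ball $B=B(x_0,r)$. Inside $2\sqrt{n}B$, the size condition and the pointwise dominance $\Mg a \leq CMa$ give $\Mg a(x) \leq C\|\chi_B\|_\pp^{-1}$. Outside $2\sqrt{n}B$, the vanishing moments of $a$ up to order $d=\lfloor n(p_0^{-1}-1)\rfloor$ produce the standard decay $\Mg a(x) \leq C\|\chi_B\|_\pp^{-1}(r/|x-x_0|)^{n+d+1}$; together with the bound $(r/|x-x_0|)^n \leq C M(\chi_B)(x)$ from \eqref{eqn:maxest-far}, this yields the unified estimate
\[ \Mg a(x) \leq C\|\chi_B\|_\pp^{-1} M(\chi_B)(x)^s, \qquad s=(n+d+1)/n. \]
Setting $h_j=\lambda_j\|\chi_{B_j}\|_\pp^{-1}$ and using positive homogeneity of $M$ and sublinearity of $\Mg$,
\[ \Mg f(x) \leq C\sum_j \bigl(M(h_j^{1/s}\chi_{B_j})(x)\bigr)^s. \]
Since $d$ is chosen precisely so that $s>1/p_0$, Lemma~\ref{lemma:max-up} gives that $M$ is bounded on $L^{s\pp}$; the vector-valued estimate Lemma~\ref{lemma:max-vector} applied with exponent $s\pp$ (and outer index $s>1$), together with the homogeneity identity $\||g|^{1/s}\|_{s\pp}=\|g\|_\pp^{1/s}$ from Lemma~\ref{lemma:homog-exp}, then yields
\[ \|\Mg f\|_\pp^{1/s} \leq C\bigg\|\sum_j h_j\chi_{B_j}\bigg\|_\pp^{1/s}, \]
which is the desired bound. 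The degenerate case $d<0$ (which forces $p_->1$) reduces to the pointwise bound $|f| \leq \sum_j h_j\chi_{B_j}$, the dominance $\Mg\leq CM$, and the boundedness of $M$ on $L^\pp$.

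For the ``only if'' direction, by Proposition~\ref{Prop-Density} I may assume $f \in H^\pp \cap L^1_{loc}$. For each $k\in\Z$, apply Theorem~\ref{Thm-CZ} at height $2^k$ to produce Whitney cubes $\{Q_{j,k}^*\}$ with bounded overlap covering $\Omega_k=\{\Mg f>2^k\}$, a good function $g_k$ with $\|g_k\|_\infty\leq C2^k$, and bad functions $b_{j,k}$. Following the telescoping construction in Stein (pp.~102--105), write $f=\sum_k(g_{k+1}-g_k)$ and realize each difference as a sum of functions $A_{j,k}$ supported on $Q_{j,k}^*$, with $\|A_{j,k}\|_\infty\leq C2^k$ and vanishing moments up to order $d$. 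Setting $\lambda_{j,k}=C2^k\|\chi_{Q_{j,k}^*}\|_\pp$ and $a_{j,k}=A_{j,k}/\lambda_{j,k}$ produces $(\pp,\infty)$ atoms. The bounded overlap \eqref{eqn:CZ2} and the geometric-series identity $\sum_k 2^k\chi_{\Omega_k}(x)\leq 2\Mg f(x)$ give
\[ \bigg\|\sum_{k,j}\lambda_{j,k}\frac{\chi_{Q_{j,k}^*}}{\|\chi_{Q_{j,k}^*}\|_\pp}\bigg\|_\pp \leq C\bigg\|\sum_k 2^k\chi_{\Omega_k}\bigg\|_\pp \leq C\|\Mg f\|_\pp. \]
Convergence of the series to $f$ in $H^\pp$ (and not only in $\Ss'$) then follows from the tail estimate \eqref{Mr-Mg} of Theorem~\ref{Thm-CZ}, combined with Lemma~\ref{lemma:minkowski-low} (or Minkowski's inequality if $p_->1$) and absolute continuity of the $\Lp$-norm on sets of small measure.

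The main obstacle is precisely this convergence in the quasi-norm of $H^\pp$: the tails must be controlled by $\|\Mg f\|_{\Lp(E)}$ for sets $E$ of small measure, and the cases $p_-\leq 1$ and $p_->1$ must be treated separately because of the differing (quasi)triangle inequalities. A secondary technical point is verifying that the telescoping functions $A_{j,k}$ inherit vanishing moments of order $d$; this requires the Hilbert-space projection construction used in the second case of the proof of Theorem~\ref{Thm-CZ}.
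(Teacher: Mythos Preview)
Your outline is essentially the paper's proof: Lemma~\ref{lemma:atoms-in-Hp} (pointwise estimate on $\Mg a$ plus the vector-valued inequality) for one direction, and Lemma~\ref{lemma:Atom-Stein} (iterated Calder\'on--Zygmund at dyadic heights, telescoping $\sum(g_{k+1}-g_k)$, Stein's construction of the $A_{j,k}$) for the other. Your unified pointwise bound $\Mg a \le C\|\chi_B\|_\pp^{-1}M(\chi_B)^s$ is a slight streamlining of the paper's split into $I_1$ and $I_2$, but the mechanism is the same.

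The one place your sketch is too quick is the convergence $\|b^j\|_{H^\pp}\to 0$. You propose to deduce this from \eqref{Mr-Mg} together with Lemma~\ref{lemma:minkowski-low}; but \eqref{Mr-Mg} is an estimate for a \emph{single} piece $b_k^j$, and summing via the quasi-triangle inequality gives only
\[
\|\Mr b^j\|_\pp^{p_-}\le \sum_k \|\Mr b_k^j\|_\pp^{p_-}\le C\sum_k \|\Mg f\|_{L^\pp(Q_k^{j*})}^{p_-}.
\]
Because the Luxemburg norm is not additive over disjoint (or boundedly overlapping) sets, the right-hand sum is \emph{not} controlled by $\|\Mg f\|_{L^\pp(E_j)}^{p_-}$, and absolute continuity alone does not close the gap. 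The paper avoids this by keeping the sum $\sum_k \Mr b_k^j$ inside the norm and splitting into a near part (on $Q_k^{j*}$, handled by bounded overlap and \eqref{CZ-bk}) and a far part (handled by \eqref{CZ-bk2} together with the vector-valued inequality of Lemma~\ref{lemma:max-vector}), arriving at $C\|\Mg f\,\chi_{E_j}\|_\pp$, which does tend to zero. You correctly flag this convergence as the main obstacle, but the route you name does not get there; you need the in-norm split.
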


\begin{remark}
\label{cor-Atom-Norm} As an immediate corollary we get that $(\pp,\infty)$ atoms are
uniformly bounded in $H^\pp$.   However, as we will see, unlike the
classical case we will not use this fact to prove the  boundedness of
operators.
\end{remark}

\smallskip

Theorem~\ref{Thm-Atomic} follows from two lemmas whose proof
we defer momentarily.  

\begin{lemma} \label{lemma:atoms-in-Hp} Given $\pp \in \MP$, suppose
  $\{a_j\} $ is a sequence of $(\pp, \infty)$ atoms, supported on $B_j
  = B(x_j, r_j)$, and $\{\lambda_j\}$ is a non-negative sequence that satisfies
\begin{equation} \label{eqn:atomHp1}
 \bigg\| \sum_j \lambda_j 
\frac{\chi_{B_j}}{\| \chi_{B_j} \|_{\pp}} \bigg\|_{\pp} < \infty.
\end{equation}
Then the series $f = \sum_j \lambda_j a_j$ converges in
$H^{\pp}$, and 
 \begin{align} \label{Atom-Coeff1}
\| f \|_{H^{\pp}} \leq 
C (n, \pp, p_0)\bigg\| \sum_j \lambda_j \frac{\chi_{B_j}}{\| \chi_{B_j} \|_{\pp}} \bigg\|_{\pp}.
	\end{align}
\end{lemma}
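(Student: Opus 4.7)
The plan is to reduce \eqref{Atom-Coeff1} to a single application of the vector-valued maximal inequality (Lemma~\ref{lemma:max-vector}), after establishing a pointwise bound on $\Mg a_j$ in terms of a power of $M\chi_{B_j}$.

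First, I would prove the key pointwise estimate
\[ \Mg a_j(x) \leq C \|\chi_{B_j}\|_\pp^{-1} \bigl[M\chi_{B_j}(x)\bigr]^{r}, \qquad r = \frac{n+d+1}{n}, \]
with $d = \lfloor n(p_0^{-1} - 1)\rfloor$. This follows by splitting into $x \in 2B_j$ and $x \in \R^n \setminus 2B_j$. On $2B_j$ the size condition $\|a_j\|_\infty \leq \|\chi_{B_j}\|_\pp^{-1}$ and the fact that test functions $\Phi \in \Ss_N$ are uniformly $L^1$-bounded give $\Mg a_j(x) \leq C\|\chi_{B_j}\|_\pp^{-1}$; since $M\chi_{B_j}(x) \gtrsim 1$ on $2B_j$ this matches the right-hand side. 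Off $2B_j$ I would subtract the degree-$d$ Taylor polynomial of $\Phi_t(x - \cdot)$ about $x_j$ (permitted by the vanishing moments) to obtain the decay $(r_j/|x-x_j|)^{n+d+1}$, which coincides with $[M\chi_{B_j}(x)]^{r}$ up to a constant on that region. This is essentially the computation already carried out in the proof of \eqref{CZ-bk} and \eqref{CZ-bk2}.

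Setting $t_j = \lambda_j/\|\chi_{B_j}\|_\pp$ and using sublinearity of $\Mg$,
\[ \Mg f(x) \leq \sum_j \lambda_j \Mg a_j(x) \leq C \sum_j t_j \bigl[M\chi_{B_j}(x)\bigr]^{r}, \]
so the proof reduces to showing
\[ \Bigl\| \sum_j t_j [M\chi_{B_j}]^{r} \Bigr\|_\pp \leq C \Bigl\| \sum_j t_j \chi_{B_j} \Bigr\|_\pp. \]
This is the crucial step and, in my view, the main obstacle. By the definition of $d$ we have $r > 1/p_0$, hence $(r\pp)_- \geq r p_- > p_-/p_0 > 1$, and by Lemma~\ref{lemma:max-up} the maximal operator is bounded on $L^{r\pp}$. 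Thus $r\pp$ meets the hypotheses of Lemma~\ref{lemma:max-vector}, which I would apply with vector-valued exponent $r$ to the sequence $g_j = t_j^{1/r}\chi_{B_j}$:
\[ \Bigl\| \Bigl(\sum_j (Mg_j)^{r} \Bigr)^{1/r} \Bigr\|_{r\pp} \leq C \Bigl\| \Bigl(\sum_j g_j^{r}\Bigr)^{1/r} \Bigr\|_{r\pp}. \]
Invoking Lemma~\ref{lemma:homog-exp} to peel off the $1/r$-powers on both sides converts the inequality into $\pp$-norms and yields the desired bound. The reason I see this as the hard step is that the naive approach using only the quasi-triangle inequality (Lemma~\ref{lemma:minkowski-low}) produces the weaker bound $\|f\|_{H^\pp}^{p_-} \leq C\sum_j \lambda_j^{p_-}$, which misses the Str\"omberg--Torchinsky-type mixed expression on the right of \eqref{Atom-Coeff1}; the rescaling to $r\pp$ is precisely what packages the two-region pointwise estimate into the correct norm.

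Finally, I would handle convergence. Applied to a finite block of atoms, the same argument yields
\[ \| f_M - f_N \|_{H^\pp} \leq C \Bigl\| \sum_{N < j \leq M} t_j \chi_{B_j} \Bigr\|_\pp, \]
where $f_N$ denotes the $N$-th partial sum. Since $\sum_j t_j \chi_{B_j} \in \Lp$ by hypothesis \eqref{eqn:atomHp1}, this sum is finite almost everywhere, so the tails $\sum_{j > N} t_j \chi_{B_j}$ decrease to zero pointwise; by the dominated convergence theorem and Lemma~\ref{lemma:norm-mod} the right-hand side tends to zero. Hence $\{f_N\}$ is Cauchy in $H^\pp$, and by completeness (Proposition~\ref{prop:complete}) it converges to some $f \in H^\pp$. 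Continuity of the quasi-norm under $H^\pp$-limits then delivers \eqref{Atom-Coeff1}.
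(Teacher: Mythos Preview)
Your proof is correct and follows the same core strategy as the paper: a pointwise bound on the maximal function of an atom via the Hardy--Littlewood maximal function of $\chi_{B_j}$, followed by the vector-valued maximal inequality (Lemma~\ref{lemma:max-vector}) and Lemma~\ref{lemma:homog-exp}.  The difference is purely organizational.  The paper splits $\|\Mr f\|_\pp^{p_-}$ into a ``near'' piece $I_1$ (over $2B_j$) and a ``far'' piece $I_2$ (over $(2B_j)^c$), and then applies Lemma~\ref{lemma:max-vector} \emph{twice}, with vector exponent $1/p_0$ for $I_1$ and $\gamma=(n+d+1)/n$ for $I_2$.  You instead absorb the near region into the far one by observing that $M\chi_{B_j}\gtrsim 1$ on $2B_j$, which gives the single global pointwise bound $\Mg a_j \leq C\|\chi_{B_j}\|_\pp^{-1}[M\chi_{B_j}]^{\gamma}$ and hence only one call to Lemma~\ref{lemma:max-vector}.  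This is a genuine simplification of the paper's argument, though the underlying ideas are identical.  Your treatment of convergence (Cauchy partial sums via dominated convergence and Proposition~\ref{prop:complete}) is also more explicit than the paper's, which passes directly from the formal series to the estimate.
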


\begin{lemma} \label{lemma:Atom-Stein} 
  Let $\pp \in \MP$. If $f \in H^{\pp}$, then there exist
  $(\pp,\infty)$ atoms
  $\{ a_{k, j} \}$, supported on balls $B_{k, j}$, and non-negative coefficients
  $\{ \lambda_{k, j} \}$ such that
	\begin{align}
	\label{Atom-Decomp}
	f = \sum_{k, j} \lambda_{k, j} a_{k, j}.
	\end{align}
Moreover,
 \begin{align} \label{Atom-Coeff2}
\bigg\| \sum_{k, j} \lambda_{k, j} \frac{\chi_{B_{k, j}}}
{\| \chi_{B_{k, j}} \|_{\pp}} \bigg\|_{\pp} \leq C (n, \pp, p_0)\| f \|_{H^{\pp}}.
	\end{align}
\end{lemma}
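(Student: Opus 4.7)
The plan is to carry out Stein's iterative construction adapted to the variable-exponent setting: apply the Calder\'on--Zygmund decomposition (Theorem~\ref{Thm-CZ}) at each dyadic height $\lambda=2^k$, then assemble the resulting bad parts (with suitable corrections) into atoms. By Proposition~\ref{Prop-Density} it suffices to treat $f\in H^\pp\cap L^1_{loc}$; the general case follows from Proposition~\ref{prop:complete} by a density argument.

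For each $k\in\Z$, set $\Omega_k=\{M_N f>2^k\}$ and apply Theorem~\ref{Thm-CZ}: this yields Whitney cubes $\{Q_{k,j}\}$, their expansions $\{Q_{k,j}^*\}$ of bounded overlap, a partition of unity $\{\eta_{k,j}\}$ subordinate to the enlargements $\{\tilde Q_{k,j}\}$, polynomial projections $c_{k,j}$ of degree $d=\lfloor n(p_0^{-1}-1)\rfloor$, and bad parts $b_{k,j}=(f-c_{k,j})\eta_{k,j}$. The good function $g_k=f-\sum_j b_{k,j}$ satisfies $|g_k|\le C\,2^k$ a.e., so $g_k\to0$ as $k\to-\infty$, while $g_k\to f$ as $k\to+\infty$ because $|\Omega_k|\to0$. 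Consequently, in $\Ss'$,
\[
f=\sum_{k\in\Z}(g_{k+1}-g_k).
\]
As in~\cite[Ch.~III.2]{stein93}, each difference decomposes as $g_{k+1}-g_k=\sum_j A_{k,j}$, where $A_{k,j}$ is formed from $b_{k,j}$ by subtracting off contributions of those $b_{k+1,i}\eta_{k,j}$ whose supports meet $\tilde Q_{k,j}$ and then adding polynomial correction terms obtained by projecting $(f-c_{k+1,i})\eta_{k+1,i}\eta_{k,j}$ onto $\mathcal{H}_d$ with respect to appropriate weighted $L^2$ inner products. The outcome is supported on $Q_{k,j}^*$, satisfies $\|A_{k,j}\|_\infty\le C\,2^k$, and has $\int A_{k,j}(x)x^\alpha\,dx=0$ for $|\alpha|\le d$. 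Choosing balls $B_{k,j}\supset Q_{k,j}^*$ of comparable diameter and setting
\[
\lambda_{k,j}=C\,2^k\|\chi_{B_{k,j}}\|_\pp,\qquad a_{k,j}=A_{k,j}/\lambda_{k,j},
\]
makes $a_{k,j}$ a $(\pp,\infty)$ atom.

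To prove \eqref{Atom-Coeff2}, I exploit the bounded overlap of $\{B_{k,j}\}_j$ and the inclusion $\bigcup_j B_{k,j}\subset C\Omega_k$ to obtain
\[
\sum_{k,j}\lambda_{k,j}\frac{\chi_{B_{k,j}}}{\|\chi_{B_{k,j}}\|_\pp}=C\sum_k 2^k\sum_j\chi_{B_{k,j}}\le C'\sum_k 2^k\chi_{\Omega_k}.
\]
A pointwise geometric-series computation gives $\sum_{k:\,2^k<M_N f(x)}2^k\le 2\,M_N f(x)$, so taking the $\Lp$ norm yields
\[
\bigg\|\sum_{k,j}\lambda_{k,j}\frac{\chi_{B_{k,j}}}{\|\chi_{B_{k,j}}\|_\pp}\bigg\|_\pp\le C\|M_N f\|_\pp=C\|f\|_{H^\pp}.
\]
The convergence of $\sum_{k,j}\lambda_{k,j}a_{k,j}$ in $H^\pp$ (upgrading the $\Ss'$ convergence above) then follows from Lemma~\ref{lemma:atoms-in-Hp} applied to tail sums, using the preceding bound together with Lemma~\ref{lemma:minkowski-low} when $p_-\le 1$ (and Minkowski's inequality otherwise) to show the partial sums are Cauchy in $H^\pp$.

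The main obstacle is the construction of the $A_{k,j}$ in the previous paragraph: enforcing simultaneously the support condition in $Q_{k,j}^*$, the vanishing moments up to order $d$, and the uniform bound $C\,2^k$ requires delicate bookkeeping of which level-$(k{+}1)$ cubes meet a given $\tilde Q_{k,j}$, and it uses essentially the polynomial-projection mechanism that drove the proof of Theorem~\ref{Thm-CZ}. A secondary subtlety peculiar to the variable-exponent setting is that one cannot compare $\lambda_{k,j}$ to $2^k|B_{k,j}|^{1/p}$ directly: the coefficients must be normalized via $\|\chi_{B_{k,j}}\|_\pp$, and the passage from pointwise pieces like $\sum_k 2^k\chi_{\Omega_k}$ to an $\Lp$ bound goes through Lemma~\ref{lemma:norm-mod2} rather than through any simple scaling.
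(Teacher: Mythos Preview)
Your outline follows the paper's exactly: reduce to $L^1_{loc}$ by density, run the Calder\'on--Zygmund decomposition at heights $2^k$, telescope, build the $A_{k,j}$ as in Stein, normalize to atoms, and sum a geometric series for the coefficient estimate. Two steps, however, are handled differently in the paper, and your shortcuts for them are not adequate in the variable-exponent setting.

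First, the assertion that $g_k\to f$ in $\Ss'$ ``because $|\Omega_k|\to 0$'' does not go through when $p_-<1$. The bad part $b^k$ contains polynomial corrections bounded by $C\,2^k$ on $\Omega_k$, so pairing against a test function supported in a compact set $K$ leaves you needing $2^k|\Omega_k\cap K|\to 0$; but $\Mg f\in L^\pp$ only yields $\Mg f\in L^{p_-(K)}(K)$ on $K$, and Chebyshev then gives $2^k|\Omega_k\cap K|\lesssim 2^{k(1-p_-(K))}$, which blows up if $p_-(K)<1$. The paper instead proves $\|b^k\|_{H^\pp}\to 0$ directly, splitting $\sum_j\Mr b_{k,j}$ into near and far pieces and invoking the pointwise bounds~\eqref{CZ-bk}--\eqref{CZ-bk2} together with the vector-valued maximal inequality (Lemma~\ref{lemma:max-vector}) for the far piece; $\Ss'$-convergence then follows from the argument in Proposition~\ref{prop:complete}. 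Without this, you can show the atomic series is Cauchy in $H^\pp$ via Lemma~\ref{lemma:atoms-in-Hp}, but you cannot identify its limit with $f$.

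Second, your route to \eqref{Atom-Coeff2} via ``$\bigcup_j B_{k,j}\subset C\Omega_k$'' is unclear (the notation $C\Omega_k$ has no meaning for a general open set) and the intended inclusion $B_{k,j}\subset\Omega_k$ is not automatic: the balls $B_{k,j}$ must contain the supports of the $A_{k,j}$, which involve correction terms from level $k{+}1$, and they may protrude from $\Omega_k$. The paper again uses Lemma~\ref{lemma:max-vector}: since $|B_{k,j}|\le c|Q_{k,j}^*|$ one has $\chi_{B_{k,j}}\le c\,M(\chi_{Q_{k,j}^*})^{1/p_0}$, so the vector-valued inequality gives
\[
\Big\|\sum_{k,j}2^k\chi_{B_{k,j}}\Big\|_\pp\le C\Big\|\sum_{k,j}2^k\chi_{Q_{k,j}^*}\Big\|_\pp,
\]
and the cubes $Q_{k,j}^*$ \emph{do} sit inside $\Omega_k$ with bounded overlap; your geometric-series argument then finishes correctly.
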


\begin{proof}[Proof of Theorem \ref{Thm-Atomic}] 
  By Lemmas~\ref{lemma:atoms-in-Hp} and~\ref{lemma:Atom-Stein}, $f\in
  \Hp$ if and only if it has the desired atomic decomposition.
  Therefore, it remains to show that \eqref{eqn:atomic1} holds.  
  Given $f \in H^{\pp}$, there exists an atomic decomposition such
  that \eqref{Atom-Coeff2} holds.  This shows that the $H^\pp$ norm of
  $f$ dominates the infimum of the atomic decomposition norms.  To see
  the opposite inequality, given any decomposition $f = \sum_j
  \lambda_j a_j$, \eqref{Atom-Coeff1} holds. Since this is true for
  all atomic decomposition, we have that $\| f \|_{H^{\pp}}$ is  majorized by
  the infimum of the atomic decomposition norms.
	\end{proof}

        Throughout the rest of this section, let $d = \lfloor
        n(\frac{1}{p_0} - 1) \rfloor$ and $\gamma = (n + d + 1)/n$.
        Since $\pp \in \MP$, $M$ is also bounded on $L^{\gamma\pp }$.
        For by definition, $d > n(\frac{1}{p_0} - 1) - 1$, and
        this is equivalent to $\frac{n + d + 1}{n} > \frac{1}{p_0}$.
        Thus by Lemma~\ref{lemma:max-up} we get the boundedness of
        $M$.

\begin{proof}[Proof of Lemma~\ref{lemma:atoms-in-Hp}]
  Fix $\Phi\in \Ss$ such that $\int \Phi\,dx \neq 0$ and
  $\supp(\Phi)\subset B(0,1)$.  Fix atoms $\{ a_j \}$ with support $\{
  B_j \}$ and coefficients $\{\lambda_j\}$ such
  that~\eqref{eqn:atomHp1} holds.  Given $B = B(x_0, r)$, let $2B =
  B(x_0, 2r)$. We consider the case $p_-<1$; if $p_-\geq 1$ the proof
  is essentially the same, omitting the exponent $p_-$.  By
  Lemma~\ref{lemma:minkowski-low},
 \begin{multline*}
\| \Mr f \|_{\pp}^{p_-} \lesssim \| \sum_{j} \lambda_j \Mr (a_j) \|_{\pp}^{p_-} \\
\leq \underbrace{\bigg\| \sum_{j} \lambda_j \Mr (a_j) 
\cdot \chi_{2B_j} \bigg\|_{\pp}^{p_-}}_{I_1}
 + \underbrace{\bigg\| \sum_{j} \lambda_j \Mr (a_j) 
\cdot \chi_{(2B_j)^c}\bigg \|_{\pp}^{p_-}}_{I_2}.
	\end{multline*}
We first estimate $I_1$.  By the size condition on $(\pp,\infty)$
atoms, we have that 
\begin{equation} \label{eqn:atom-size}
 \Mr a_j (x) \leq \|a_j\|_\infty \|\Phi\|_1 \leq c\| \chi_{B_j} \|_{\pp}^{-1}.
\end{equation}
 Define $g_j = (\| \chi_{B_j} \|_{\pp}^{-1} \lambda_j)^{p_0}
 \chi_{B_j}$. If $x \in \chi_{2B_j}$, then by the definition of the
 maximal operator, 
\[ 
	M g_j (x) \geq (\| \chi_{B_j} \|_{\pp}^{-1}
        \lambda_j)^{p_0} \frac{1}{|2B_j|} \int_{2B_j} \chi_{B_j} dx 
= 2^{-n}(\| \chi_{B_j} \|_{\pp}^{-1}  \lambda_j)^{p_0}.
\]
Then  by Lemmas~\ref{lemma:homog-exp} and~\ref{lemma:max-vector},
\begin{multline*}
I_1 	
\leq C\bigg\| \sum_j \| \chi_{B_j} \|_{\pp}^{-1}  \lambda_j 
\ \chi_{2B_j} \bigg\|_{\pp}^{p_-}  \leq C \bigg\| \sum_j M
(g_j)^{1/p_0} 
\bigg\|_{\pp}^{p_-} \\
= C \bigg\| \bigg( \sum_j M (g_j)^{1/p_0} \bigg)^{p_0}
\bigg\|_{\frac{\pp}{p_0}}^{\frac{p_-}{p_0}} 
\leq C  \bigg\| \bigg(
\sum_j g_j^{1/p_0} \bigg)^{p_0}
\bigg\|_{\frac{\pp}{p_0}}^{\frac{p_-}{p_0}}  
= C \bigg\| \sum_j\| \chi_{B_j} \|_{\pp}^{-1} 
\lambda_j \chi_{B_j} \bigg\|_{\pp}^{p_-}.
\end{multline*}

To estimate $I_2$,  let $a$ be an atom supported on $B=B(x_0,r)$.
Then arguing as in~\cite[p.~106]{stein93} we have for $x \in (2B)^c$
the pointwise estimate
\begin{multline}	\label{Atom-pw}
\Mr a(x) \leq c \left( \frac{r}{|x - x_0|} \right)^{n + 1 + d}
\avgint_B a(y)\,dy \\
\leq \left( \frac{r}{|x - x_0|} \right)^{n + 1 + d} \|a\|_\infty
\leq  c \left( \frac{r}{|x - x_0|} \right)^{n \gamma} \|\chi_{B} \|_{\pp}^{-1}.  
\end{multline}
Now arguing as we did in
the proof inequality~\eqref{eqn:maxest-far}, we have for each $j$ that 
\begin{equation} \label{eqn:atom-pw2}
\Mr (a_j) (x)\leq  c \left( \frac{r_j}{|x - x_j|} \right)^{n\gamma} \| \chi_{B_j} \|_{\pp}^{-1}  
\leq c \| \chi_{B_j} \|_{\pp}^{-1}  M (\chi_{B_j})^{\gamma}.
\end{equation}
We can now estimate $I_2$: by Lemmas~\ref{lemma:homog-exp}
and~\ref{lemma:max-vector}, 
\begin{multline*}
I_2 \leq \bigg\| \sum_j \lambda_j \Mr (a_j) 
\cdot \chi_{(2B_j)^c} \bigg\|_{\pp}^{p_-} \\
\leq c \bigg\| \sum_j \frac{\lambda_j}{\| \chi_{B_j} \|_{\pp}}  
M (\chi_{B_j})^{\gamma} \bigg\|_{\pp}^{p_-} 
= \bigg\| \bigg( \sum_j M \bigg( \frac{\lambda_j^{1/\gamma}}
{\| \chi_{B_j} \|_{\pp}^{1/\gamma}} 
\chi_{B_j} \bigg) ^{\gamma} \bigg)^{1/\gamma} \bigg\|_{\gamma \pp}^{\gamma p_-} \\
\leq C  \bigg\| \bigg( \sum_j \| \chi_{B_j} \|_{\pp}^{-1}
\lambda_j\chi_{B_j} \bigg)^{1/\gamma} \bigg\|_{\gamma \pp}^{\gamma p_-} 
= C \bigg\| \sum_j \| \chi_{B_j} \|_{\pp}^{-1}  \lambda_j \chi_{B_j} \bigg\|_{\pp}^{p_-}.
\end{multline*}
\end{proof}

\begin{proof}[Proof of Lemma~\ref{lemma:Atom-Stein}]
We will prove this result assuming $f \in H^{\pp} \cap
 L_{loc}^1$; then by Proposition~\ref{Prop-Density} and a density
 argument (cf.~\cite[p.~109]{stein93}) we get it for arbitrary $f\in H^\pp$. 

 Fix such an $f$ and for every $j \in \Z$, let $E_{j} = \{ x : \Mg f (x)
 > 2^j \}$.  By Theorem~\ref{Thm-CZ} we have that 
 $f = g^{j} + b^{j}$,  where $|g^{j} (x)| \leq c 2^j$ and
 $b^{j} = \sum_k b_k^{j}$, with each $b_k^{j}$ supported on
a  cube $Q_{k}^{j \ast}$.  These cubes have bounded overlap
$E_j = \bigcup_k Q_{k}^{j   \ast}$.   Moreover, we have that
\begin{equation} \label{bj-0}
\lim_{j \rightarrow \infty} \| b^{j} \|_{H^{\pp}} = 0.
\end{equation}
To show this we proceed as in the proof of Lemma~\ref{lemma:atoms-in-Hp} (again
only considering the case $p_-<1$):
\[ 
\| b^{j} \|_{H^{\pp}}^{p_-}\leq 
\underbrace{\bigg\| \sum_k \Mr(b_k^{j}) 
\cdot \chi_{Q_{k}^{j \ast}}\bigg \|_{\pp}^{p_-}}_{I_1}
 + \underbrace{\bigg\| \sum_k \Mr (b_k^{j}) \cdot \chi_{(Q_{k}^{j
       \ast})^c} \bigg\|_{\pp}^{p_-}}_{I_2}.
\]
We first estimate $I_1$:  by  \eqref{CZ-bk} we have that
\[ 	I_1	\leq c\| \sum_k \Mg f \cdot \chi_{Q_{k}^{j \ast}}
\|_{\pp}^{p_-}  
\leq c \| \Mg f \cdot \chi_{E_j} \|_{\pp}^{p_-}. \]
The last term tends to $0$ as $j \rightarrow 0$:  this follows by
Lemma~\ref{lemma:norm-mod} and the dominated convergence theorem. 

To estimate $I_2$, let $x_{k, j}$ and $\ell_{k, j}$ be the center and
side length of $Q_k^{j \ast}$.  Then arguing as we did for
inequality~\eqref{eqn:maxest-far}, if $x\in (Q_{k}^{j \ast})^c$, then
\[ 	M (\chi_{Q_k^{j \ast}}) (x) \geq c\frac{\ell_{k, j}^n}{|x -
  x_k|^n}. \]
Then by inequality~\eqref{CZ-bk2} and Lemma~\ref{lemma:max-vector}, 
\begin{multline*}
	I_2 	
= \bigg\| \sum_k \Mr (b_k^{j}) \cdot \chi_{Q_k^{j \ast}}
\bigg\|_{\pp}^{p_-} 
\leq c \bigg\| \sum_k 2^j \frac{\ell_{k, j}^{n + 1 + d}}{|x - x_k|^{n + d + 1}} \cdot \chi_{Q_k^{j \ast}} \bigg\|_{\pp}^{p_-} \\
\leq c 2^{j p_-} \bigg\| \sum_k  M (\chi_{Q_k^{j \ast}})^{\gamma}
\bigg\|_{\pp}^{p_-} 
= c 2^{j p_-} \bigg\| \bigg( \sum_k (M \chi_{Q_k^{j \ast}})^{\gamma} \bigg)^{1/\gamma} \bigg\|_{\gamma \pp}^{\gamma p_-}  \\
\leq c 2^{jp_-} \bigg\| \bigg( \sum_k (\chi_{Q_k^{j \ast}})^{\gamma}
\bigg)^{1/\gamma} \bigg\|_{\gamma \pp}^{\gamma p_-}  
= c \bigg\| \sum_k 2^j \chi_{Q_k^{j \ast}} \bigg\|_{\pp}^{p_-} \\
\leq c\| 2^j \chi_{\{ x : \Mg f (x) > 2^j \}} \|_{\pp}^{p_-} \leq c\| \Mg f \chi_{E_j} \|_{\pp}^{p_-}.
\end{multline*}
As before, the last term goes to $0$ as $j\rightarrow \infty$. This
proves the limit~\eqref{bj-0}.                

\medskip

As a consequence of~\eqref{bj-0} we have that $g_j\rightarrow f$ in
norm (and so in $\Ss'$) as $j\rightarrow \infty$.  Further, since
$g_j\rightarrow 0$ uniformly as $j\rightarrow -\infty$, we have that
\[ f = \sum_j (g^{j + 1} - g^{j}).\]
From the proof of Theorem~\ref{Thm-CZ}, let $\{\eta_k^j\}$ be the
partition of unity for $E_j$ with $\supp(\eta_k^j)\subset Q_k^{j*}$.
Since $g^{j+1}-g^j = b^{j+1}-b^j$, $\supp(g^{j+1}-g^j)\subset
E_j$. Therefore, we have that
\[ f = \sum_{j,k} (g^{j + 1} - g^{j})\eta_k^j.\]

We now want to show that this expression can be rewritten as sum of
atoms.  Our argument follows Stein~\cite[pp.108--9]{stein93}, and
since many details are the same, we omit them here.
Again as in the proof of Theorem~\ref{Thm-CZ}, define the projections
$\mathcal{P}_k^{j} : \mathcal{S}' \rightarrow \mathcal{H}_d$, where
$\mathcal{H}_d$ is the space of polynomials of degree at most $d$,
thought of as a subspace of the Hilbert space
$L^2(Q_k^{j*},\tilde{\eta}_k^k\,dx)$. Define the polynomials $c_k^j =
\mathcal{P}_k^j (f)$  and $c_{\ell}^{j + 1} = \mathcal{P}_{\ell}^{j +
  1} (f)$, and let $c_{k, \ell} = \mathcal{P}_{\ell}^{j + 1} [(f -
c_{\ell}^{j + 1}) \eta_k^j]$. 
For each $j$, we can then write
	\[ g^{(j + 1)} - g^{j} = b^{j} - b^{(j + 1)} = \sum_k (f - c_k^j) \eta_k^j - \sum_{\ell} (f - c_{\ell}^{j + 1}) \eta_{\ell}^{j + 1} = \sum_k A_k^j, \]
where 
		\[ A_k^j = (f - c_k^j) \eta_k^j - \left( \sum_{\ell} (f - c_{\ell}^{j + 1}) \eta_{\ell}^{j + 1} \right) \eta_k^j + \sum_{\ell} c_{k, \ell} \eta_{\ell}^{j + 1}. \]

There exists a ball $B_{k, j}=B(x_{k,j},c\ell_{k,j})$ containing
the cube $Q_k^{j \ast}$ such that $|B_{k,j}|\leq c|Q_k^{j*}|$.
Moreover we have that $|A_k^j|\leq c2^j$ and $A_k^j$ satisfies the moment
conditions for $(\pp,\infty)$ atoms.  Therefore, if we define
\begin{equation} \label{eqn:patch2}
 a_{k, j} =  A_k^j c^{-1}2^{-j} \| \chi_{B_{k, j}} \|_{\pp}^{-1}, \qquad
	\lambda_{k, j} = c 2^j \| \chi_{B_{k, j}} \|_{\pp},
      \end{equation}
the $a_{k,j}$ are $(\pp,\infty)$ atoms and we have the decomposition
\eqref{Atom-Decomp}.  It converges in $\Ss'$, and so, arguing as in
the proof of Proposition~\ref{Prop-Density}, it converges in
$H^\pp(\R^n)$.

Finally, we prove~\eqref{Atom-Coeff2}.  We consider the case $p_-<1$; if $p_-\geq
1$, modify the following argument by replacing $1/p_0$ by $q>1$.  
Since $|B_{k,j}|\leq c|Q_k^{j*}|$, $M(\chi_{Q_k^{j*}}) \geq
c\chi_{B_{k,j}}=c\chi_{B_{k,j}}^{p_0}$.  Therefore, by
Lemmas~\ref{lemma:homog-exp} and~\ref{lemma:max-vector}, 
\begin{multline*}
\bigg\| \sum_{k, j} \frac{\lambda_{k, j}}{\| \chi_{B_{k, j}} \|_{\pp}} 
\ \chi_{B_{k, j}} \bigg\|_{\pp}
\leq C \bigg\| \sum_{k, j}\big(2^{jp_0}  M (\chi_{Q_k^{j*}})\big)^{1/p_0}  \bigg\|_{\pp}\\
 = C \bigg\| \bigg(\sum_{k, j} M (2^{jp_0}\chi_{Q_k^{j*}})^{1/p_0}
\bigg)^{p_0} \bigg\|_{\pp/p_0}^{1/p_0}
 \leq C \bigg\| \bigg(\sum_{k, j} 2^j \chi_{Q_k^{j\ast}}
 \bigg)^{p_0}\bigg\|_{\pp/p_0}^{1/p_0}  \\
 = C \bigg\| \sum_{k, j} 2^j \chi_{Q_k^{j\ast}} \bigg\|_{\pp}
\leq C \bigg\| \sum_{ j} 2^j \chi_{E_j} \bigg\|_{\pp}.
\end{multline*}
If $x \in \R^n$, there exists a unique $j_0 \in \Z$ such that $2^{j_0}
< \Mg f (x) \leq 2^{j_0 + 1}$. Hence, 
\[ \sum_j 2^j \chi_{E_j} (x) = \sum_{j \leq j_0} 2^j = 2^{j_0 + 1} \leq 2 \Mg f (x). \]
If we combine this with the previous estimate, we get \eqref{Atom-Coeff2}.

\end{proof}

\section{The atomic decomposition:  $(\pp,q)$ atoms}
\label{section:pq-atoms}

In this section we consider the atomic decomposition
when $q<\infty$. Our first main result is that when $q$ is sufficiently
large, the analog of Theorem~\ref{Thm-Atomic} holds. 
Furthermore,  we show that in this case
we can give a \textit{finite} atomic decomposition of
$H^{\pp}$ (Theorem~\ref{Var-Thm} below). Lastly, by minor modifications
to the proof of Theorem~\ref{Var-Thm}, we give a finite atomic
decomposition of the weighted Hardy space $H^{p_0} (w)$ 
(Theorem~\ref{lemma:Weighted-FinDecomp} below).  We use this to prove
the boundedness of singular integral operators on $H^{\pp}$ in Section 8. 

\subsection{Infinite atomic decomposition using $(\pp, q)$ atoms} 
We extend Theorem \ref{Thm-Atomic} by giving an atomic
decomposition using $(\pp, q)$ atoms.

\begin{theorem} \label{thm:qatoms}
Suppose $\pp \in \MP$.  Then a distribution $f$ is in $H^\pp$ if
and only if for $q>1$ sufficiently large, there exists a collection
$\{a_j\}$ of $(\pp,q)$ atoms supported on balls $\{B_j\}$, and
non-negative coefficients $\{\lambda_j\}$ such that
\[ f =\sum_j \lambda_j a_j, \]
where the series converges in $H^\pp(\R^n)$.  Moreover,
\begin{equation} \label{eqn:qatom1}
\| f \|_{H^{\pp}} \simeq
\inf\left\{ \bigg\| \sum_j \lambda_j \frac{\chi_{B_j}}
{\| \chi_{B_j} \|_{\pp}} \bigg\|_{\pp} : f = \sum_j \lambda_j a_j \right\}.
\end{equation}
\end{theorem}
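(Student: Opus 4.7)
The strategy is to handle the two directions of \eqref{eqn:qatom1} separately: the bound of the atomic infimum by $\|f\|_{H^\pp}$ is essentially free from Theorem~\ref{Thm-Atomic}, while the reverse bound requires the Rubio de Francia extrapolation theory for variable Lebesgue spaces.

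For the easy direction, given $f \in H^\pp$, Theorem~\ref{Thm-Atomic} produces a $(\pp,\infty)$ decomposition $f = \sum_j \lambda_j a_j$ with $\bigg\|\sum_j \lambda_j \chi_{B_j}/\|\chi_{B_j}\|_\pp\bigg\|_\pp \lesssim \|f\|_{H^\pp}$. Any $(\pp,\infty)$ atom $a$ supported in a ball $B$ also satisfies $\|a\|_q \leq |B|^{1/q}\|a\|_\infty \leq |B|^{1/q}\|\chi_B\|_\pp^{-1}$ and has the same vanishing moments required of a $(\pp,q)$ atom, so the identical decomposition with identical coefficients $\lambda_j$ already serves as a $(\pp,q)$ decomposition. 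Thus the infimum in \eqref{eqn:qatom1} is controlled by $\|f\|_{H^\pp}$.

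For the harder direction---given $f = \sum_j \lambda_j a_j$ with $(\pp,q)$ atoms on balls $B_j$, bound $\|f\|_{H^\pp}$ by the atomic norm---I would follow the pattern of Lemma~\ref{lemma:atoms-in-Hp}: writing $\Mr f \leq \sum_j \lambda_j \Mr a_j$ and applying Lemma~\ref{lemma:minkowski-low} when $p_-\leq 1$, split $\|\Mr f\|_\pp^{p_-}$ into a term $I_1$ capturing the restrictions of $\Mr a_j$ to $2B_j$ and a term $I_2$ capturing the restrictions to the complement. The outside estimate for $I_2$ transfers verbatim: the pointwise bound \eqref{Atom-pw} only requires the vanishing moments together with $\avgint_{B_j}|a_j|\,dy \leq \|\chi_{B_j}\|_\pp^{-1}$, and the latter follows from the $(\pp,q)$ size condition by H\"older's inequality ($\avgint_{B_j}|a_j|\,dy \leq |B_j|^{-1/q}\|a_j\|_q \leq \|\chi_{B_j}\|_\pp^{-1}$). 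Consequently \eqref{eqn:atom-pw2} holds verbatim, and the vector-valued maximal inequality (Lemma~\ref{lemma:max-vector}) in $L^{\gamma\pp}$ with $\gamma=(n+d+1)/n$ closes the outside estimate exactly as before.

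The inside estimate $I_1$ is the main obstacle. In the $(\pp,\infty)$ proof we used $\Mr a_j \chi_{2B_j} \leq c\|\chi_{B_j}\|_\pp^{-1}\chi_{2B_j}$ to realize $\lambda_j\|\chi_{B_j}\|_\pp^{-1}\chi_{2B_j}$ as a power of $M(g_j)$ for $g_j = (\|\chi_{B_j}\|_\pp^{-1}\lambda_j)^{p_0}\chi_{B_j}$, and then invoked the vector-valued maximal inequality; without an $L^\infty$ bound on the atoms this step breaks down. My plan is to invoke the Rubio de Francia extrapolation theory for variable Lebesgue spaces from \cite{MR2210118} (see also \cite{cruz-fiorenza-book,cruz-martell-perezBook}): the corresponding inside estimate for $(p_0,q)$ atoms holds in the weighted Hardy space $H^{p_0}(w)$ whenever $w$ lies in an appropriate Muckenhoupt class, by the Str\"omberg--Torchinsky theory \cite{MR1011673}, and since $\pp \in \MP$ extrapolation transfers this weighted estimate to $L^\pp$. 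The lower threshold on $q$ in the theorem is dictated by matching the extrapolation range to the Muckenhoupt class in which the weighted atomic estimate is valid. A further delicate point, noted in the introduction, is that extrapolation applies only when the underlying inequality holds on a suitable dense subclass; combining the extrapolation argument with the $(\pp,\infty)$ atomic decomposition from Theorem~\ref{Thm-Atomic} supplies precisely the dense family on which to validate the weighted inside estimate, and is what ultimately delivers the converse inequality in \eqref{eqn:qatom1}.
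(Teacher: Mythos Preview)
Your proposal has the right ingredients---the easy direction via $(\pp,\infty)\subset(\pp,q)$ atoms, and the hard direction via Rubio de Francia extrapolation combined with the Str\"omberg--Torchinsky weighted atomic theory---but the organization differs from the paper's in a way worth noting.

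You propose to carry the $I_1/I_2$ split from Lemma~\ref{lemma:atoms-in-Hp} through the full sum, handle $I_2$ directly by the vector-valued maximal inequality, and invoke extrapolation only on the ``inside'' piece $I_1$. The paper does not do this. Instead it applies extrapolation to the \emph{entire} quantity $\|\Mr f_k\|_\pp^{p_0}$ for a finite partial sum $f_k=\sum_{j=1}^k\lambda_j a_j$: dualizing against $h\in L^{(\pp/p_0)'}$, replacing $h$ by the $A_1$ weight $\mathcal{R}h$, and then citing the full Str\"omberg--Torchinsky result (Lemma~\ref{lemma:ST-atoms}) to bound $\|f_k\|_{H^{p_0}(\mathcal{R}h)}$ by the weighted atomic norm. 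This is cleaner because Lemma~\ref{lemma:ST-atoms} is exactly the weighted analogue of the full estimate you want; there is no need to isolate an ``inside'' weighted estimate, which is not the statement of the Str\"omberg--Torchinsky theorem and would require going back into its proof. Your separate treatment of $I_2$ is correct but redundant once extrapolation is applied globally. The paper does use an $I_1/I_2$ split, but only for a \emph{single} atom, as a preliminary step to show each $a_j\in H^\pp$ so that the finite sums $f_k$ make sense in $H^\pp$.

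Your description of the density issue is also somewhat off. The paper does not use the $(\pp,\infty)$ decomposition to supply a dense family on which to validate a weighted estimate. Rather, it works directly with the finite partial sums $f_k$ of the given $(\pp,q)$ decomposition, proves the bound uniformly in $k$, shows $\{f_k\}$ is Cauchy in $H^\pp$ (because the tail of the atomic norm goes to zero), and passes to the limit via completeness (Proposition~\ref{prop:complete}) and monotone convergence.
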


\begin{remark} Denote the norm of the maximal operator by
$\|M\|_{(\pp/p_0)'}$. Then it suffices to take $q> \max(1,p_+,p_0(1+2^{n+3}\|M\|_{(\pp/p_0)'}))$. 
\end{remark}

One half of the proof of  Theorem~\ref{thm:qatoms} is immediate: since for any $q$,
$1<q<\infty$, $|B|^{1/q}\|a\|_q \leq \|a\|_\infty$, $(\pp,\infty)$
atoms are $(\pp,q)$ atoms.  Therefore, by
Lemma~\ref{lemma:Atom-Stein}, every function $f\in \Hp$ can be
written as the sum of $(\pp,q)$ atoms and $\|f\|_{H^\pp}$ has
the desired bound.   Note that in this case there are no restrictions
on $q$.   The heart of the proof, therefore, is to prove the converse.

\begin{lemma} \label{lemma:q-atoms-in-Hp} Given $\pp \in \MP$, there
  exists $q=q(\pp,p_0,n)>\max(p_+,1)$ such that if 
  $\{a_j\} $ is a sequence of $(\pp, q)$ atoms supported on $B_j
  = B(x_j, r_j)$, and $\{\lambda_j\}$ is a non-negative sequence that satisfies
\begin{equation} \label{eqn:qatomHp1}
 \bigg\| \sum_j \lambda_j 
\frac{\chi_{B_j}}{\| \chi_{B_j} \|_{\pp}} \bigg\|_{\pp} < \infty,
\end{equation}
then the series $f = \sum_j \lambda_j a_j$ converges in
$H^{\pp}(\R^n)$, and 
 \begin{align} \label{eqn:qatomHp2}
\| f \|_{H^{\pp}} \leq 
C (n, \pp, p_0,q)\bigg\| \sum_j \lambda_j \frac{\chi_{B_j}}{\| \chi_{B_j} \|_{\pp}} \bigg\|_{\pp}.
	\end{align}
\end{lemma}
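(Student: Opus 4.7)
The plan is to follow the template of Lemma~\ref{lemma:atoms-in-Hp}, replacing the pointwise bound $\|a_j\|_\infty \leq \|\chi_{B_j}\|_\pp^{-1}$ available for $(\pp,\infty)$ atoms with a combination of H\"older's inequality and the $L^q$-boundedness of the maximal operator; the threshold on $q$ is exactly where the norm of $M$ on $L^{(\pp/p_0)'}$ enters. By Theorem~\ref{Thm-Max} it suffices to bound $\|\Mr f\|_\pp$ for a fixed Schwartz $\Phi$ with $\int \Phi \neq 0$ and compact support, and I would split
\[
\|\Mr f\|_\pp^{p_-} \leq \bigg\|\sum_j \lambda_j \Mr a_j \chi_{2B_j}\bigg\|_\pp^{p_-} + \bigg\|\sum_j \lambda_j \Mr a_j \chi_{(2B_j)^c}\bigg\|_\pp^{p_-} = I_1 + I_2
\]
using Lemma~\ref{lemma:minkowski-low} (or Minkowski if $p_->1$).

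The far-field part $I_2$ transfers essentially verbatim from the proof of Lemma~\ref{lemma:atoms-in-Hp}: the vanishing moment condition together with H\"older's inequality gives $\int_{B_j} |a_j| \,dy \leq |B_j|^{1/q'}\|a_j\|_q \leq |B_j|\|\chi_{B_j}\|_\pp^{-1}$, so the decay estimate~\eqref{Atom-pw} produces exactly~\eqref{eqn:atom-pw2}. The remainder of that proof---Lemma~\ref{lemma:homog-exp} together with the vector-valued maximal inequality (Lemma~\ref{lemma:max-vector}) on $L^{\gamma\pp}$ with $\gamma=(n+d+1)/n$---then yields the desired bound for $I_2$.

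The main obstacle is the local part $I_1$, since $(\pp,q)$ atoms need not admit any pointwise bound by $\|\chi_{B_j}\|_\pp^{-1}$. My plan is to proceed by duality. Using Lemma~\ref{lemma:homog-exp} to pass to $L^{\pp/p_0}$ and then the duality in Lemma~\ref{lemma:holder}, I reduce $I_1^{p_0/p_-}$ to the supremum over nonnegative $g$ with $\|g\|_{(\pp/p_0)'}\leq 1$ of $\sum_j\lambda_j^{p_0}\int_{2B_j}(\Mr a_j)^{p_0}g\,dx$ (subadditivity of $x\mapsto x^{p_0}$ is valid since $p_0\leq 1$). For each $j$, H\"older with conjugate exponents $q/p_0$ and $(q/p_0)'$ combined with the $L^q$-boundedness of $M$, so that $\|\Mr a_j\|_q\leq c\|a_j\|_q\leq c|B_j|^{1/q}\|\chi_{B_j}\|_\pp^{-1}$, together with the arithmetic identity $p_0/q+1/r=1$ for $r=(q/p_0)'$, yields
\[
\int_{2B_j}(\Mr a_j)^{p_0}g\,dx \leq c\,\|\chi_{B_j}\|_\pp^{-p_0}\int_{B_j}M_r g\,dx,
\]
where $M_r g=(Mg^r)^{1/r}$. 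Summing over $j$ and applying H\"older in $L^{\pp/p_0}$ then produces a bound of the form $\bigl\|\sum_j (\lambda_j/\|\chi_{B_j}\|_\pp)^{p_0}\chi_{B_j}\bigr\|_{\pp/p_0}\cdot\|M_r g\|_{(\pp/p_0)'}$, and the second factor is bounded by a constant precisely when $M$ is bounded on $L^{(\pp/p_0)'/r}$; by Lemma~\ref{lemma:diening} applied to $\pp/p_0$ together with Lemma~\ref{lemma:max-up}, this holds once $r$ is close enough to $1$, which translates into the threshold on $q$ stated in the Remark.

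The last delicate step---and the one that seems to genuinely require the Rubio de Francia extrapolation machinery alluded to in the introduction---is to identify the first factor above with $\bigl\|\sum_j\lambda_j\chi_{B_j}/\|\chi_{B_j}\|_\pp\bigr\|_\pp^{p_0}$, since the pointwise inequality $(\sum x_j)^{p_0}\leq\sum x_j^{p_0}$ available for $p_0\leq 1$ runs in the wrong direction and would only yield the weaker Nakai--Sawano-style bound discussed in the Remark after Theorem~\ref{Thm-Max}. I would handle this final step by instead deriving the corresponding weighted inequality in $L^{p_0}(w)$ for $w\in A_{q/p_0}$ by an analogous but simpler argument (using the weighted $L^{q/p_0}$-boundedness and vector-valued maximal inequality), and then invoking the variable-exponent extrapolation theorem of~\cite{MR2210118} (see also~\cite{cruz-fiorenza-book,cruz-martell-perezBook}), whose hypothesis is precisely $\pp\in\MP$. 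Convergence of the series $f=\sum_j\lambda_j a_j$ in $H^\pp$ then follows by applying the same estimates to tails, together with Proposition~\ref{prop:complete} and the hypothesis~\eqref{eqn:qatomHp1}.
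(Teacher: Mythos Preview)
Your overall strategy is close to the paper's, and you correctly identify the crux: for $(\pp,q)$ atoms the local piece $I_1$ cannot be handled by the pointwise bound used for $(\pp,\infty)$ atoms, and the direct duality route leaves you with $\bigl\|\sum_j(\lambda_j/\|\chi_{B_j}\|_\pp)^{p_0}\chi_{B_j}\bigr\|_{\pp/p_0}$ rather than $\bigl\|\sum_j\lambda_j\chi_{B_j}/\|\chi_{B_j}\|_\pp\bigr\|_\pp^{p_0}$.  Your diagnosis that this is exactly the Nakai--Sawano weakening, and that extrapolation is the way out, is right.

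The gap is in your last paragraph.  You assert that the weighted inequality in $L^{p_0}(w)$ for $w\in A_{q/p_0}$ follows ``by an analogous but simpler argument (using the weighted $L^{q/p_0}$-boundedness and vector-valued maximal inequality).''  It does not.  Run your own local-part computation with $w\,dx$ in place of $g\,dx$: H\"older with exponents $q/p_0$ and $(q/p_0)'$, together with $\|\Mr a_j\|_q\le c|B_j|^{1/q}\|\chi_{B_j}\|_\pp^{-1}$ and $w\in RH_{(q/p_0)'}$, gives $\int_{2B_j}(\Mr a_j)^{p_0}w\le C\,\|\chi_{B_j}\|_\pp^{-p_0}w(B_j)$.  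Summing, you obtain $\sum_j(\lambda_j/\|\chi_{B_j}\|_\pp)^{p_0}w(B_j)$, and you again need $\sum_j x_j^{p_0}\le C(\sum_j x_j)^{p_0}$ pointwise to reach the target $\int(\sum_j\lambda_j\chi_{B_j}/\|\chi_{B_j}\|_\pp)^{p_0}w$---the \emph{same} obstruction, in the \emph{same} direction.  The weighted vector-valued maximal inequality does not help here for the same reason it did not help in the variable-exponent setting: there is no pointwise domination of $\Mr a_j\chi_{2B_j}$ by a maximal function of $\chi_{B_j}$.

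The paper does not attempt to reprove the weighted estimate; it imports it as a black box from Str\"omberg--Torchinsky~\cite{MR1011673} (stated here as Lemma~\ref{lemma:ST-atoms}), which asserts precisely $\|f\|_{H^{p_0}(w)}\le C\|\sum_j\lambda_j\chi_{B_j}/\|\chi_{B_j}\|_\pp\|_{L^{p_0}(w)}$ for $w\in A_1\cap RH_{(q/p_0)'}$.  The paper then runs the Rubio de Francia iteration explicitly: build $\mathcal{R}h\in A_1$ with $[\mathcal{R}h]_{A_1}\le 2\|M\|_{(\pp/p_0)'}$, invoke the sharp reverse-H\"older exponent (Lemma~\ref{lemma:sharp-RH}) to force $\mathcal{R}h\in RH_{(q/p_0)'}$ once $q>p_0(1+2^{n+3}\|M\|_{(\pp/p_0)'})$, apply Lemma~\ref{lemma:ST-atoms} with $w=\mathcal{R}h$, and dualize back.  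Note also that the relevant weight class is $A_1\cap RH_{(q/p_0)'}$, not $A_{q/p_0}$; the latter is what you would need for $M$ on $L^{q/p_0}(w)$, but that route, as just shown, dead-ends at the same inequality.  Your treatment of $I_2$ and of convergence via Proposition~\ref{prop:complete} is fine and matches the paper.
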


To prove Lemma~\ref{lemma:q-atoms-in-Hp} we will adapt the proof 
of Rubio de Francia extrapolation in the setting of
variable Lebesgue spaces.  This was first proved
in~\cite{MR2210118} (see
also~\cite{cruz-fiorenza-book,cruz-martell-perezBook}).   We need more
careful control of the constants than was given in
the original proof, and so we will reproduce the key steps.

To apply extrapolation we need a version of
Lemma~\ref{lemma:q-atoms-in-Hp} for weighted $H^p$ spaces.  To state
it we introduce some definitions and preliminary results.
For complete information on the theory of weights,
see~\cite{duoandikoetxea01,garcia-cuerva-rubiodefrancia85}.
By a weight $w$ we will always mean a non-negative, locally integrable
function that is positive almost everywhere.  We will say that $w\in
A_1$ if 
\[ [w]_{A_1} = \esssup_{x\in \R^n} \frac{Mw(x)}{w(x)} < \infty. \]
Equivalently, $w\in A_1$ if  given any ball $B$,
\[ \avgint_B w(y) \,dy \leq [w]_{A_1} \essinf_{x\in B} w(x). \]
A
weight satisfies the reverse H\"older inequality with exponent $s>1$,
denoted by $w\in RH_s$, if for every cube $B$,
\[ \left(\avgint_B w(x)^s\,dx\right)^{1/s} \leq C\avgint_Q
w(x)\,dx; \]
the best possible constant is denoted by $[w]_{RH_s}$.  Note that if
$w\in RH_s$, then by H\"older's inequality, $w\in RH_t$ for all $t$,
$1<t<s$, and $[w]_{RH_t}\leq [w]_{RH_s}$.  If $w\in A_1$, then $w\in
RH_s$, and we have sharp control over the exponent $s$. 

\begin{lemma} \label{lemma:sharp-RH}
Given $w\in A_1$, then $w\in RH_s$, where
$s=1+\big(2^{n+2}[w]_{A_1})^{-1}$.  
\end{lemma}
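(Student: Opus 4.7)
The plan is to fix a cube $B$ (the ball version of the statement follows by a harmless dimensional comparison), normalize so that $\overline{w}_B := |B|^{-1}\int_B w = 1$, and bound $\int_B w^s$ via the layer-cake identity
\[
\int_B w^s\,dx = (s-1)\int_0^\infty t^{s-2}\,W(t)\,dt, \qquad W(t):=w\bigl(\{x\in B:w(x)>t\}\bigr),
\]
split at $t=1$. The low integral $(s-1)\int_0^1 t^{s-2}W(t)\,dt$ is estimated trivially by $W(0)=w(B)$, producing the leading term $\overline{w}_B^s\,|B|$.

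The decisive estimate is the good-$\lambda$ type inequality
\[
W(t)\le 2^n\,t\,\bigl|\{x\in B:w(x)>t/[w]_{A_1}\}\bigr|,\qquad t>1,
\]
which I would prove by running the Calder\'on--Zygmund decomposition of $w$ on $B$ at level $t$. This produces disjoint maximal dyadic subcubes $\{Q_j\}$ with $t<\overline{w}_{Q_j}\le 2^n t$ and $w\le t$ a.e.\ off $\bigcup_j Q_j$, whence $W(t)\le\sum_j w(Q_j)\le 2^n t\,|\bigcup_j Q_j|$. The $A_1$ hypothesis then forces $w>\overline{w}_{Q_j}/[w]_{A_1}>t/[w]_{A_1}$ a.e.\ on each $Q_j$, placing $\bigcup_j Q_j$ inside the asserted super-level set.

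Substituting this bound into the high part and changing variables $u=t/[w]_{A_1}$ converts the high integral into a multiple of $\int_B w^s$ itself, yielding
\[
(s-1)\int_1^\infty t^{s-2}W(t)\,dt\le\frac{(s-1)\,2^n\,[w]_{A_1}^{s}}{s}\int_B w^s\,dx.
\]
Setting $s-1=1/(2^{n+2}[w]_{A_1})$ and using the elementary bound $[w]_{A_1}^{s-1}=\exp\bigl((s-1)\log[w]_{A_1}\bigr)\le e^{1/2^{n+2}}\le 2$ (via $\log A\le A$ for $A\ge 1$) shows that the prefactor is at most $1/(2s)\le 1/2$, so it can be absorbed on the left to give $\int_B w^s\le 2\,\overline{w}_B^s\,|B|$, i.e.\ $[w]_{RH_s}\le 2^{1/s}\le 2$.

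The one delicate point is that the absorption step presupposes $\int_B w^s<\infty$. I would circumvent this by first running the whole argument on the truncation $w_N:=\min(w,N)$, which is trivially in $A_1$ with $[w_N]_{A_1}\le[w]_{A_1}$, obtaining a uniform bound $\int_B w_N^s\le 2\,\overline{w}_B^s\,|B|$, and then letting $N\to\infty$ by monotone convergence. The main bookkeeping challenge in executing this plan is tracking the constants in the change-of-variables step sharply enough to produce the explicit factor $2^{n+2}$, rather than a qualitative ``$c_n[w]_{A_1}$''.
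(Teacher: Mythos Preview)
The paper does not actually prove this lemma; it simply cites external references (\cite{MR2427454} and \cite{cruz-fiorenza-book}), noting only that those sources work with cubes and that the ball version follows from the doubling property of $A_1$ weights. Your argument is correct and is precisely the standard proof found in those references: a Calder\'on--Zygmund stopping-time decomposition of $w$ on $B$ at level $t$, combined with the layer-cake representation of $\int_B w^s$, yields a self-improving inequality whose high-level part can be absorbed when $s-1 = (2^{n+2}[w]_{A_1})^{-1}$. Your bookkeeping of constants, the truncation $w_N=\min(w,N)$ to justify absorption, and the passage from cubes to balls are all handled correctly. There is nothing to add.
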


\begin{remark} \label{remark:rh-const}
  This result is proved in~\cite{MR2427454} (see
  also~\cite{cruz-fiorenza-book}), where everything is done in terms
  of cubes instead of balls.  However, because $w\in A_1$ is doubling,
  the reverse H\"older inequality holds for balls with same exponent;
  in this case the constant $[w]_{RH_s}$ depends on
  $n$ and~$[w]_{A_1}$.
\end{remark}

Given a weight $w\in A_1$ and $p_0>0$, the weighted Hardy space
$H^{p_0}(w)$ consists of all tempered distributions $f$ such that 
\[ \|f\|_{H^{p_0}(w)}=  \|\Mr f\|_{L^{p_0}(w)} = \left(\int_\subRn \Mr f(x)^{p_0}
  w(x)\,dx\right)^{1/p_0} < \infty.  \]
These spaces have an atomic decomposition:  see Str\"omberg and
Torchinsky~\cite{MR1011673}.    We state their result in the form we need to
apply it; see Remark \ref{remark:atom-switch} below.

\begin{lemma} \label{lemma:ST-atoms} Given  $\pp \in \MP$ and
  $q>\max(p_0,1)$, suppose $\{a_j\}$ is a sequence of $(\pp,q)$
atoms, $\{\lambda_j\}$ is a non-negative sequence, and  $w\in A_1 \cap
  RH_{(q/p_0)'}$.  If 
\[ \bigg\| \sum_j \lambda_j \frac{\chi_{B_j}}{\|\chi_{B_j}\|_\pp}\bigg\|_{L^{p_0}(w)} <
\infty. \]
Then the series
\[ f = \sum_j \lambda_j a_j \]
converges in $H^{p_0}(w)$ and 
\[ \|f\|_{H^{p_0}(w)} \leq C (\pp,p_0,q,n, [w]_{A_1},[w]_{RH_{(q/p_0)'}})\bigg\| \sum_j \lambda_j
\frac{\chi_{B_j}}{\|\chi_{B_j}\|_\pp}\bigg\|_{L^{p_0}(w)}.\]
\end{lemma}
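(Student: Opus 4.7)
The plan is to estimate $\|f\|_{H^{p_0}(w)} = \|\Mr f\|_{L^{p_0}(w)}$, following the pattern of Lemma~\ref{lemma:atoms-in-Hp} but replacing the variable $\Lp$ with the weighted $L^{p_0}(w)$ and adapting to $(\pp,q)$ atoms with $q<\infty$. By sublinearity, $\Mr\bigl(\sum_j \lambda_j a_j\bigr) \leq \sum_j \lambda_j \Mr(a_j)$, and splitting with respect to $2B_j$ versus its complement gives $\Mr f \leq S_1 + S_2$, where $S_1 = \sum_j \lambda_j \Mr(a_j)\chi_{2B_j}$ and $S_2 = \sum_j \lambda_j \Mr(a_j)\chi_{(2B_j)^c}$. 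For $p_0 \leq 1$ (the main case treated in detail), $p_0$-subadditivity of $\|\cdot\|_{L^{p_0}(w)}^{p_0}$ reduces matters to bounding $S_1$ and $S_2$ separately; the case $p_0>1$ is simpler and uses Minkowski.

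For the far part, the vanishing moment condition of $(\pp,q)$ atoms (of order $d = \lfloor n(1/p_0 - 1)\rfloor$) together with the decay of $\Phi$ gives, for $x \in (2B_j)^c$,
\[ \Mr(a_j)(x) \leq C\bigl(r_j/|x-x_j|\bigr)^{n\gamma}\|\chi_{B_j}\|_\pp^{-1}, \qquad \gamma = \frac{n+d+1}{n}, \]
after bounding $\fint_{B_j}|a_j|$ via H\"older's inequality and the size condition on the atom. Using $(r_j/|x-x_j|)^n \leq CM(\chi_{B_j})(x)$ and setting $h_j = (\lambda_j/\|\chi_{B_j}\|_\pp)^{1/\gamma}\chi_{B_j}$, one obtains $S_2 \leq C\sum_j M(h_j)^\gamma$ pointwise. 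Since the choice of $d$ forces $\gamma > 1/p_0$, we have $p_0\gamma > 1$ and $w \in A_1 \subset A_{p_0\gamma}$; applying the weighted Fefferman-Stein vector-valued maximal inequality in $L^{p_0\gamma}(w)$ with index $\gamma$ and raising the result to the $\gamma$-th power yields
\[ \|S_2\|_{L^{p_0}(w)} \leq C\Big\|\sum_j \lambda_j\frac{\chi_{B_j}}{\|\chi_{B_j}\|_\pp}\Big\|_{L^{p_0}(w)}. \]

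For the near part, $(\pp,q)$ atoms with $q<\infty$ no longer satisfy a pointwise size bound, so the far-part strategy is unavailable. The approach is to use the $L^q$-boundedness of $\Mr$ together with the reverse H\"older condition $w \in RH_{(q/p_0)'}$: H\"older's inequality with exponent $q/p_0$ on Lebesgue measure, combined with $\|\Mr(a_j)\|_q \leq C\|a_j\|_q \leq C|B_j|^{1/q}\|\chi_{B_j}\|_\pp^{-1}$ and $\bigl(\int_{2B_j}w^{(q/p_0)'}\bigr)^{1/(q/p_0)'} \leq C|B_j|^{-p_0/q}w(2B_j)$ (with the doubling property of $w\in A_1$ giving $w(2B_j)\leq C w(B_j)$), yields the per-atom estimate $\int_{2B_j}\Mr(a_j)^{p_0}w\,dx \leq C\|\chi_{B_j}\|_\pp^{-p_0}w(B_j)$.

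The main obstacle is assembling these per-atom bounds into a global estimate controlled by $\|\sum_j\lambda_j\chi_{B_j}/\|\chi_{B_j}\|_\pp\|_{L^{p_0}(w)}$, since naive $p_0$-subadditivity produces only the strictly larger quantity $\sum_j\|\lambda_j\chi_{B_j}/\|\chi_{B_j}\|_\pp\|_{L^{p_0}(w)}^{p_0}$ (the two agree only when the balls $B_j$ are pairwise disjoint). To resolve this, invoke the pointwise dominance $\Mr(a_j)\chi_{2B_j}\leq C(M|a_j|^q)^{1/q}\chi_{2B_j}$ together with $\chi_{2B_j}\leq 2^n M(\chi_{B_j})$, rewriting $S_1$ in a form amenable to the weighted vector-valued Fefferman-Stein maximal inequality applied in an appropriately chosen $L^s(w)$; the reverse H\"older condition on $w$ ensures that the requisite $A_p$ property for $w$ is in force. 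Once $S_1$ and $S_2$ are controlled, convergence of $\sum_j\lambda_j a_j$ in $H^{p_0}(w)$ follows by applying the estimate to tail sums and using that the right-hand side of the lemma is finite by hypothesis.
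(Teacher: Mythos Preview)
The paper does not give its own proof of this lemma: it is stated with a citation to Str\"omberg and Torchinsky~\cite[Chapter~VIII, Theorem~1]{MR1011673}, and Remark~\ref{remark:atom-switch} explains that one passes from their atoms $\bar{a}_j$ (with $\|\bar{a}_j\|_q\le |B_j|^{1/q}$) to $(\pp,q)$ atoms by the substitution $\bar{a}_j=\|\chi_{B_j}\|_\pp\, a_j$, $\bar{\lambda}_j=\lambda_j\|\chi_{B_j}\|_\pp^{-1}$. So the ``paper's proof'' is simply this translation together with the cited result.

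Your direct approach is therefore different in kind, and the far-part argument you give (moment decay plus weighted Fefferman--Stein with $w\in A_1\subset A_{p_0\gamma}$, $p_0\gamma>1$) is correct and standard. The problem is the near part. You correctly diagnose the obstacle: the per-atom estimate
\[
\int_{2B_j}\Mr(a_j)^{p_0}\,w\,dx \leq C\,\|\chi_{B_j}\|_\pp^{-p_0}\,w(B_j)
\]
feeds, via $p_0$-subadditivity, only into $\sum_j \lambda_j^{p_0}\|\chi_{B_j}\|_\pp^{-p_0}w(B_j)$, which is \emph{larger} than the target $\int\bigl(\sum_j \lambda_j\chi_{B_j}/\|\chi_{B_j}\|_\pp\bigr)^{p_0}w\,dx$ when $p_0\le 1$ and the balls overlap. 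But your proposed resolution is not a proof. The pointwise bound $\Mr(a_j)\chi_{2B_j}\le C\,(M|a_j|^q)^{1/q}$ does not put $S_1$ into a form to which the weighted Fefferman--Stein inequality applies: that inequality requires an outer Lebesgue exponent strictly greater than~$1$, while you are working in $L^{p_0}(w)$ with $p_0\le 1$, and no ``appropriately chosen $L^s(w)$'' is specified. Moreover, unlike the $q=\infty$ case (where $\Mr a_j\le C\|\chi_{B_j}\|_\pp^{-1}$ pointwise, so that the near-term contribution is immediately dominated by a function of $\chi_{B_j}$ alone), here $(M|a_j|^q)^{1/q}$ retains dependence on the full function $a_j$, and there is no pointwise inequality relating it back to $\chi_{B_j}/\|\chi_{B_j}\|_\pp$. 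The reverse H\"older hypothesis on $w$ by itself does not bridge this gap; the argument in~\cite{MR1011673} that closes it is not the elementary splitting you outline.

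In short: your sketch reproduces the easy half of the Str\"omberg--Torchinsky theorem but leaves the genuinely hard step (reassembling the near-part per-atom bounds when $p_0\le 1$) unjustified. Either cite~\cite{MR1011673} as the paper does, or supply the actual mechanism from that reference for the near part.
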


\begin{remark} \label{remark:atom-switch}
In~\cite[Chapter VIII, Theorem 1]{MR1011673} this result is stated for
atoms $\bar{a}_j$ that (obviously) do not depend on a variable
exponent $\pp$.  To pass between the two kinds of atoms, it suffices
to take $\bar{a}_j=\|\chi_{B_j}\|_\pp a_j$ and
$\bar{\lambda}_j = \lambda_j\|\chi_{B_j}\|_\pp^{-1}$.  
The atoms $\bar{a}_j$ are required to have vanishing moments for
$|\alpha| \leq  \lfloor
d/p-n \rfloor$, where $d$ is a constant such that for all $t\geq 1$, 
\[ w(B(x,tr)) \leq Kt^d w(B(x,r)). \]
If $w\in A_1$, then this is true with $d=n$:
\begin{multline*}
 w(B(x,tr)) \leq [w]_{A_1}|B(x,tr)| \essinf_{y\in B(x,tr)} w(y) \\
\leq [w]_{A_1} t^n |B(x,r)| \essinf_{y\in B(x,r)} w(y)
\leq [w]_{A_1} t^n w(B(x,r)).
\end{multline*}
\end{remark}

\begin{proof}[Proof of Lemma~\ref{lemma:q-atoms-in-Hp}]
Fix $\pp \in \MP$;  by Lemma~\ref{lemma:diening} the maximal operator is bounded
on $L^{(\pp/p_0)'}(\R^n)$.  Denote the norm of the maximal operator by
$\|M\|_{(\pp/p_0)'}$.  Fix $q> \max(1,p_+,p_0(1+2^{n+3}\|M\|_{(\pp/p_0)'}))$; the reason for
this choice will be made clear below.  

We will first show that if $a$ is a $(\pp,q)$ atom with support $B$,
then $a\in H^\pp$.  To do so we will show that $\|\Mr a\|_\pp < \infty$.    By
Lemma~\ref{lemma:minkowski-low} (if $p_-<1$; the case $p_-\geq 1$ is
handled similarly),
\[ \|\Mr a\|_\pp^{p_-} \leq \|\Mr (a)\chi_{2B}\|_\pp^{p_-}+
\|\Mr( a)\chi_{(2B)^c}\|_\pp^{p_-} = I_1 + I_2. \]
By Lemma~\ref{lemma:imbed}, since $q> \max(p_+,1)$ and $\Mr$ is bounded on
$L^q$, 
\[ I_1= \|\Mr (a)\chi_{2B}\|_\pp \leq (1+|2B|)\|\Mr (a) \chi_{2B}\|_q
\leq C(1+|2B|)\|a\|_q < \infty. \]

To show that $I_2$ is finite, by inequality~\eqref{Atom-pw}
and the definition of $(\pp,q)$ atoms, and arguing as we did for~\eqref{eqn:atom-pw2},
for $x\in (2B)^c$ we have that
\begin{multline*}
\Mr a(x) \leq c \left(\frac{r}{|x-x_0|}\right)^{n\gamma}
|B|^{-1/q}\|a\|_q  \\
\leq \left(\frac{r}{|x-x_0|}\right)^{n\gamma}
\|\chi_B\|_\pp^{-1} \leq c \|\chi_B\|_\pp^{-1} M(\chi_B)(x)^\gamma,
\end{multline*}
where $x_0$ is the center of $B$ and $\gamma= (n+d+1)/n$.  As we noted
in the proof of Theorem~\ref{Thm-Atomic}, $M$ is bounded on
$L^{\gamma\pp}$.  Therefore, by Lemma~\ref{lemma:homog-exp},
\[ I_2= \|\Mr a\chi_{(2B)^c}\|_\pp \leq 
c\|\chi_B\|_\pp^{-1} \|M(\chi_B)\|_{\gamma \pp}^\gamma
\leq c\|\chi_B\|_\pp^{-1} \|\chi_B\|_{\gamma \pp}^\gamma < \infty. \]

\medskip

To construct our weight $w$,  form the Rubio de Francia iteration
algorithm with respect to $L^{(\pp/p_0)'}$.   Given a function $h$,
define
\[ \mathcal{R} h  = \sum_{i=0}^\infty \frac{M^i f}{2^i \|M\|_{(\pp/p_0)'}}, \]
where $M^0 h = |h|$ and for $i\geq 1$, $M^i h = M\circ \cdots \circ M
h$ is $i$ iterates of the maximal operator.  Three facts follow at
once from this definition (cf.~\cite{MR2210118,cruz-martell-perezBook}):

\begin{enumerate}

\item $|h| \leq \mathcal{R} h$;

\item $\mathcal{R}$ is bounded on $L^{\pp/p_0)'}(\mathcal{R}^n)$ and $\|\mathcal{R}
    h\|_{(\pp/p_0)'} \leq 2\|h\|_{(\pp/p_0)'}$;

\item $\mathcal{R} h \in A_1$ and $[\mathcal{R} h]_{A_1} \leq
  2\|M\|_{(\pp/p_0)'} = C(\pp,p_0,n)$.  

\end{enumerate}
By Lemma~\ref{lemma:sharp-RH} we have that $\mathcal{R} h\in RH_s$,
where $s= 1+(2^{n+3}\|M\|_{(\pp/p_0)'})^{-1}$.  Therefore, since
$q \geq p_0(1+2^{n+3}\|M\|_{(\pp/p_0)'})$, we have that $\mathcal{R} h \in
RH_{(q/p_0)'}$ and $[\mathcal{R} h]_{Rh_{(q/p_0)'}}\leq C(\pp,p_0,n)$.
We stress that all of these constants are independent of $h$.

Fix a sequence of atoms $\{a_j\}$ and constants $\{\lambda_j\}$ as in
the hypotheses.  Let $f =\sum \lambda_ja_j$; {\em a priori} we do not
know that this series converges in $H^\pp$.  To avoid this problem, define the functions
\[ f_k  = \sum_{j=1}^k \lambda_j a_j. \]
Then  $f_k \in H^\pp(\R^n)$:  since $a_j\in H^\pp$,  by
Lemma~\ref{lemma:minkowski-low} (if $p_-<1$, the case $p_-\geq 1$ is
handled similarly)
\[ \|\Mr f_k \|_\pp^{p_-} \leq \sum_{j=1}^k \lambda_j^{p_-}\|\Mr a_j\|_\pp^{p_-}<\infty. \]
Furthermore, 
by Lemma~\ref{lemma:ST-atoms}, given any function $h$, $f_k \in
H^{p_0}(\mathcal{R} h)$, and 
\begin{equation} \label{eqn:ST-est}
 \|f\|_{H^{p_0}(\mathcal{R} h)} \leq 
C(\pp,p_0,n)\bigg\|\sum_{j=1}^k \lambda_j\frac{\chi_{B_j}}{\|\chi_{B_j}\|_\pp}
\bigg\|_{L^{p_0}(\mathcal{R}  h)}.
\end{equation}

We will now show that \eqref{eqn:qatomHp2} holds for each $f_k$ with a
constant independent of $k$.     By Lemmas~\ref{lemma:holder}
and~\ref{lemma:homog-exp}, 
\[ \|\Mr f_k \|_\pp^{p_0} = \|(\Mr f_k)^{p_0} \|_{\pp/p_0}
\leq C(\pp,p_0)\sup \int_\subRn \Mr f_k(x)^{p_0} h(x)\,dx, \]
where the supremum is taken over all $h\in L^{(\pp/p_0)'}$ with
$\|h\|_{(\pp/p_0)'}\leq 1$.  (We may assume that $h$ is non-negative.) Fix such a function $h$; we will estimate the integral on the
right-hand side with a constant independent of $h$.  By the properties
of the Rubio de Francia iteration algorithm,  \eqref{eqn:ST-est} and
Lemmas~\ref{lemma:holder} and~\ref{lemma:homog-exp},  
\begin{align*}
\int_\subRn \Mr f_k(x)^{p_0} h(x)\,dx
& \leq \int_\subRn \Mr f_k(x)^{p_0} \mathcal{R} h(x)\,dx \\
& \leq C\int_\subRn \bigg(\sum_{j=1}^k
\lambda_j\frac{\chi_{B_j}(x)}{\|\chi_{B_j}\|_\pp}\bigg)^{p_0} \mathcal{R} h(x)\,dx \\
& \leq C\bigg\|\bigg(\sum_{j=1}^k
\lambda_j\frac{\chi_{B_j}}{\|\chi_{B_j}\|_\pp}\bigg)^{p_0}\bigg\|_{\pp/p_0}\|\mathcal{R}
  h\|_{(\pp/p_0)'}  \\
& \leq C\bigg\|\sum_{j=1}^k
\lambda_j\frac{\chi_{B_j}}{\|\chi_{B_j}\|_\pp}\bigg\|_{\pp}^{p_0}.
\end{align*}
Inequality~\eqref{eqn:qatomHp2} for $f_k$ now follows and the constant depends only
on $\pp$, $p_0$ and $n$.  

To complete the proof we need to show that \eqref{eqn:qatomHp2} holds
for $f$.    But the same argument that proved this inequality for
$f_k$ shows that if $l>k$, 
\begin{equation} \label{eqn:cauchy}
 \|f_l -f_k\|_{H^\pp(\R^n)} \leq 
C\bigg\|\sum_{j=k+1}^l
\lambda_j\frac{\chi_{B_j}}{\|\chi_{B_j}\|_\pp}\bigg\|_{\pp}.
\end{equation}
However, by hypothesis we have that 
\[ \bigg\|\sum_{j=1}^\infty
\lambda_j\frac{\chi_{B_j}}{\|\chi_{B_j}\|_\pp}\bigg\|_{\pp}<\infty \]
and therefore the partial sums of this series are Cauchy in
$L^\pp(\R^n)$.  Hence, as $k,\,l \rightarrow \infty$, the right-hand
side of \eqref{eqn:cauchy} tends to $0$.  Therefore, the sequence
$\{f_k\}$ is Cauchy in $H^\pp$ and so by
Proposition~\ref{prop:complete} converges to $f$ in $H^\pp$.
Therefore, by the monotone convergence theorem in variable Lebesgue
spaces (Lemma~\ref{lemma:monotone}) we have that
\begin{multline*}
 \|f\|_{H^\pp(\R^n)} = \lim_{k\rightarrow \infty} \|f_k
 \|_{H^\pp(\R^n)} \\
\leq C \lim_{k\rightarrow \infty} 
\bigg\|\sum_{j=1}^k
\frac{\lambda_j}{\|\chi_{B_j}\|_\pp}\chi_{B_j}\bigg\|_{\pp}
=C \bigg\|\sum_{j}
\frac{\lambda_j}{\|\chi_{B_j}\|_\pp}\chi_{B_j}\bigg\|_{\pp}. 
\end{multline*}
\end{proof}

\bigskip

\subsection{Finite atomic decompositions}
Given $q<\infty$, let $H^{\pp,q}_{fin}$ be the  subspace of $H^\pp$ consisting of all $f$
that have decompositions  as finite sums of $(\pp,q)$ atoms.   By
Theorem~\ref{Thm-Atomic}, if $q$ is sufficiently large, $H^{\pp,q}_{fin}$ is dense
in $H^\pp$.      Our next result shows that on this subspace the
atomic decomposition norm, restricted to finite decompositions, is 
equivalent to the $H^{\pp}$ norm. 
This extends a result from \cite{MR2399059} to the variable
setting. 

\begin{theorem} \label{Var-Thm} Let $\pp \in \MP$ and fix $q$ as in
  Theorem~\ref{thm:qatoms}. For $f \in H_{fin}^{\pp, q} (\R^n)$,
  define
\begin{equation} \label{eqn:VT1}
 \|f\|_{H_{fin}^{\pp, q}} = \inf\bigg\{ \bigg\|\sum_{j=1}^k 
\lambda_j\frac{\chi_{B_j}}{\|\chi_{B_j}\|_\pp }\bigg\|_\pp 
: f = \sum_{j=1}^k \lambda_j a_j \bigg\},
\end{equation}
where infimum is taken over all finite decompositions of $f$ using $(\pp, q)$ atoms  $a_j$, supported on balls $B_j$.  Then
\[ \| f \|_{H^{\pp}}  \simeq \| f \|_{H_{fin}^{\pp, q}}. \]
\end{theorem}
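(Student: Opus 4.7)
The inequality $\|f\|_{H^\pp} \lesssim \|f\|_{H_{fin}^{\pp, q}}$ is immediate from Theorem~\ref{thm:qatoms}: any finite decomposition is in particular an infinite one, so applying \eqref{eqn:qatom1} to a given finite decomposition of $f$ and taking the infimum over all such decompositions yields the bound.

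For the reverse direction, we fix $f\in H_{fin}^{\pp, q}$ and normalize so that $\|f\|_{H^\pp}=1$. Since $f$ is a finite sum of $(\pp,q)$ atoms, $\supp f$ is contained in a bounded ball $B_0$ and $f\in L^q$; in particular, since $q>p_+>1$, the maximal operator is bounded on $L^q$ and so $|E_j|=|\{\Mg f>2^j\}|\to 0$ as $j\to\infty$. The plan is to start from the $(\pp,\infty)$ atomic decomposition $f=\sum_{j,k}\lambda_{k,j}a_{k,j}$ furnished by Lemma~\ref{lemma:Atom-Stein} and compress it into a genuinely finite $(\pp,q)$ decomposition whose norm is $\lesssim 1$.

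For a large parameter $j_0\in\Z$ to be chosen, we split $f=g^{j_0+1}+b^{j_0+1}$ via the Calder\'on-Zygmund decomposition of $f$ at height $2^{j_0+1}$; by the telescoping identity derived in the proof of Lemma~\ref{lemma:Atom-Stein}, this coincides with grouping the atomic series as $\sum_{j\leq j_0,k}+\sum_{j>j_0,k}$. The low-level piece $g^{j_0+1}$ is pointwise bounded by $C2^{j_0}$, supported in a ball $B_0'$ containing $B_0\cup E_{j_0+1}$, and inherits the required vanishing moments. Rescaling, it is a scalar multiple of a single $(\pp,q)$ atom with coefficient $\lambda_g\simeq 2^{j_0}\|\chi_{B_0'}\|_\pp$. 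A suitable choice of $j_0$, driven by the condition $2^{j_0}\|\chi_{B_0'}\|_\pp\lesssim 1$, together with the pointwise comparison $2^{j_0}\chi_{E_{j_0}}\lesssim\Mg f$ and the decay of $\Mg f$ outside $B_0$ controlled by a power of $M(\chi_{B_0})$, will yield $\bigl\|\lambda_g\chi_{B_0'}/\|\chi_{B_0'}\|_\pp\bigr\|_\pp\lesssim 1$.

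The main obstacle will be the high-level piece $b^{j_0+1}=\sum_k b_k^{j_0+1}$, which a priori is an infinite sum indexed by the Whitney cubes $\{Q_k^{(j_0+1)*}\}$ of $E_{j_0+1}$. Each $b_k^{j_0+1}$ is a scalar multiple of a $(\pp,q)$ atom, since $\|b_k^{j_0+1}\|_q\leq\|f\chi_{Q_k^{(j_0+1)*}}\|_q+C2^{j_0}|Q_k^{(j_0+1)*}|^{1/q}$. To obtain a finite decomposition we retain as individual atoms only the finitely many Whitney cubes with $|Q_k^{(j_0+1)*}|\geq\delta$ for some threshold $\delta>0$, and absorb the remaining small-cube tail $\sum_{\text{small }k}b_k^{j_0+1}$ into a single additional $(\pp,q)$ atom supported in $B_0'$ (the required moment cancellation is preserved because each $b_k^{j_0+1}$ has vanishing moments). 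Using the $L^q$-control on the tail, the bounded overlap of the Whitney cubes, and Chebyshev's bound $|E_{j_0+1}|\leq 2^{-(j_0+1)q}\|\Mg f\|_q^q$, the absorbing atom's coefficient is bounded by a constant depending only on $\pp$, $q$, and $n$. Collecting the finitely many contributions will yield $\|f\|_{H_{fin}^{\pp, q}}\lesssim 1$, completing the proof.
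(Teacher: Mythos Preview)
Your easy direction is fine. The reverse direction has a genuine gap in the treatment of the high-level piece $b^{j_0+1}$.

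You propose to use the single-level Calder\'on--Zygmund pieces $b_k^{j_0+1}$ as $(\pp,q)$ atoms, with coefficients $\lambda_k=\|b_k^{j_0+1}\|_q\,|Q_k^*|^{-1/q}\|\chi_{Q_k^*}\|_\pp$. The required norm bound is then
\[
\Big\|\sum_{k}\lambda_k\frac{\chi_{Q_k^*}}{\|\chi_{Q_k^*}\|_\pp}\Big\|_\pp
=\Big\|\sum_{k}\|b_k^{j_0+1}\|_q\,|Q_k^*|^{-1/q}\chi_{Q_k^*}\Big\|_\pp\lesssim 1.
\]
Your own estimate gives $\|b_k^{j_0+1}\|_q\le \|f\chi_{Q_k^*}\|_q+C2^{j_0}|Q_k^*|^{1/q}$. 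The $2^{j_0}$ term is harmless (it produces $2^{j_0}\chi_{E_{j_0+1}}\le \Mg f$), but the $\|f\chi_{Q_k^*}\|_q$ term yields, by bounded overlap, a quantity pointwise comparable to $\big(M(|f|^q)\big)^{1/q}$. Controlling this in $\Lp$ would require $M$ to be bounded on $L^{\pp/q}$, and since $q>p_+$ this fails. Equivalently, the size of the single-level bad pieces is governed by $\|f\|_q$, which is not comparable to $\|f\|_{H^\pp}=1$; rescaling an atom to a tiny ball makes $\|f\|_q$ blow up while $\|f\|_{H^\pp}$ stays fixed. Neither choosing the threshold $\delta$ nor invoking Chebyshev with $\|\Mg f\|_q$ helps, because both introduce $f$-dependent constants that you never eliminate. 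So ``collecting the finitely many contributions'' gives a finite number, but not one that is $\lesssim 1$ uniformly in $f$.

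The paper avoids this by working with the full multi-level $(\pp,\infty)$ decomposition $f=\sum_{j,k}\lambda_{k,j}a_{k,j}$ from Lemma~\ref{lemma:Atom-Stein}, whose coefficients $\lambda_{k,j}\simeq 2^j\|\chi_{B_{k,j}}\|_\pp$ \emph{are} controlled by $\|f\|_{H^\pp}$ via \eqref{Atom-Coeff2}. After splitting at a threshold $j'$, the low part $h=\sum_{j\le j'}$ becomes a single $(\pp,\infty)$ atom, and the high part $\ell=\sum_{j>j'}$ is shown to lie in $L^q$ (from the pointwise bound $|\ell|\lesssim \Mg f$); one then takes a finite partial sum $\ell_i$ with $\|\ell-\ell_i\|_q$ so small that $\ell-\ell_i$ is itself a single $(\pp,q)$ atom. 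The resulting finite norm is bounded via \eqref{Atom-Coeff2}, not via any $L^q$ data of $f$.

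A secondary issue: the choice of $j_0$ and the containment $E_{j_0+1}\subset B_0'$ with $2^{j_0}\|\chi_{B_0'}\|_\pp\lesssim 1$ rests on a uniform bound $\Mg f(x)\le C\|\chi_{B_0}\|_\pp^{-1}$ for $x\notin 4B_0$ (Lemma~\ref{lemma:decay-Mg} in the paper). This is not the single-atom estimate ``controlled by a power of $M(\chi_{B_0})$'' that you invoke; it is proved by an averaging argument using only $\|f\|_{H^\pp}$ and $\supp f\subset B_0$, and it is what pins $B_0'$ down as $4B_0$ independently of $j_0$.
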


Our argument is based on the proof of \cite[Theorem 6.2]{MR2492226}. 
It requires two lemmas.  The first
introduces a non-tangential variant of the grand maximal operator.  A
proof can be found in Bownik~\cite[Prop.~3.10]{MR1982689}.

\begin{lemma} \label{lemma:bownik-NT}
Define the non-tangential grand maximal function $\mathcal{M}_{N,1}$
by
\[ \mathcal{M}_{N,1} f(x) \sup_{\Phi \in \Ss_N} \sup_{|y-x|<t}
|\Phi_t*f(x)|. \]
Then for all $x\in \R^n$ and tempered distributions $f$,
\[  \mathcal{M}_{N,1} f(x) \approx \Mg f(x), \]
where the constants depend only on $N$.  
\end{lemma}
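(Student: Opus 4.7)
One inequality is immediate: taking $y = x$ in the non-tangential supremum shows that $\Mg f(x) \leq \mathcal{M}_{N,1} f(x)$ pointwise, with constant $1$. So the only thing to prove is the reverse inequality $\mathcal{M}_{N,1} f(x) \leq C(N)\,\Mg f(x)$.

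The idea is to absorb the non-tangential shift into the Schwartz test function. Fix $x\in \R^n$, $\Phi\in \Ss_N$, and $(y,t)$ with $|y-x|<t$. Set $h = (x-y)/t$, so $|h|<1$, and define $\Psi^h(w) = \Phi(w-h)$. A direct change of variables in the convolution integral gives
\[
\Phi_t * f(y) \;=\; \int f(z)\, t^{-n}\Phi\!\left(\tfrac{x-z}{t} - h\right) dz \;=\; \Psi^h_t * f(x).
\]
Hence, if I can show that $c(N)\,\Psi^h \in \Ss_N$ for some constant $c(N)>0$ depending only on $N$ (and \emph{not} on $h$ as long as $|h|\leq 1$), then
\[
|\Phi_t * f(y)| = |\Psi^h_t * f(x)| \leq c(N)^{-1}\,\Mg f(x),
\]
and taking the supremum over $\Phi\in\Ss_N$ and over the cone $|y-x|<t$ yields the desired bound.

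The key step, therefore, is the uniform seminorm estimate $\|\Psi^h\|_{\alpha,\beta}\leq C(N)$ for $|\alpha|,|\beta|\leq N$. By the substitution $w\mapsto w+h$,
\[
\|\Psi^h\|_{\alpha,\beta} = \sup_{w\in\R^n} |(w+h)^\alpha D^\beta \Phi(w)|.
\]
Since $|h|\leq 1$, a binomial-type expansion bounds $|(w+h)^\alpha|$ by a linear combination (with coefficients depending only on $\alpha$) of $|w^\gamma|$ for $|\gamma|\leq |\alpha|\leq N$, so the right-hand side is controlled by $C(N)\max_{|\gamma|,|\beta|\leq N}\|\Phi\|_{\gamma,\beta}\leq C(N)$, using only that $\Phi\in \Ss_N$. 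This is the main (and essentially the only) technical content of the proof; it is elementary but must be done carefully to ensure the constant depends only on $N$.

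Finally, the conversion $\Phi_t*f(y) = \Psi^h_t*f(x)$ is just a re-parametrization of convolution and needs no further justification beyond the change of variables $z \mapsto z$ (no rescaling of $t$). Combining the three ingredients — trivial lower bound, identity $\Phi_t*f(y)=\Psi^h_t*f(x)$, and the uniform seminorm bound on $\Psi^h$ — gives the pointwise equivalence with constants depending only on $N$.
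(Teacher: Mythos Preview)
Your proof is correct. The paper does not give its own proof of this lemma; it simply cites Bownik~\cite[Prop.~3.10]{MR1982689}. Your translation argument --- absorb the cone shift $y\mapsto x$ into the test function via $\Psi^h(w)=\Phi(w-h)$ with $|h|<1$, and then check uniform seminorm bounds --- is exactly the standard proof, and in fact the paper uses the same device in the proof of inequality~\eqref{Grad} in Section~\ref{sec:apriori} (there with translates of $\Phi^{(i)}$). Nothing is missing.
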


The second lemma is a decay estimate for the grand maximal operator.

\begin{lemma} \label{lemma:decay-Mg}
Given $\pp \in \MP$, suppose $f\in H^\pp$ is such that
$\supp(f)\subset B(0,R)$ for some $R>1$.  Then for all $x\in
B(0,4R)^c$,
\[ \Mg f(x) \leq C(N,\pp,p_0)\|\chi_{B(0,R)}\|_\pp^{-1}. \]
\end{lemma}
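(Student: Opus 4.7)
The plan is to combine the equivalence of $\Mg f$ with the non-tangential grand maximal function (Lemma~\ref{lemma:bownik-NT}) with a decomposition of the supremum in $\Mg f(x)$ according to the scale $t$: large $t$ is handled by non-tangential averaging of $\mathcal{M}_{N,1}f$ over a ball containing $B(0,R)$, and small $t$ by the Schwartz decay of $\Phi\in\Ss_N$ away from $\supp f$. Implicit in the conclusion is a factor of $\|f\|_{H^\pp}$; the estimate I aim for is $\Mg f(x)\leq C\,\|f\|_{H^\pp}\|\chi_{B(0,R)}\|_\pp^{-1}$, which is the dimensionally correct and homogeneous form used in its application to Theorem~\ref{Var-Thm}.

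Fix $x\in B(0,4R)^c$, $\Phi\in\Ss_N$, and $t>0$. In the large-scale case $t\geq|x|+R$, the ball $B(x,t)$ contains $B(0,R)$, so for every $z\in B(0,R)$ we have $|z-x|<t$, and by the definition of $\mathcal{M}_{N,1}$, $\mathcal{M}_{N,1}f(z)\geq|\Phi_t*f(x)|$. Raising to the power $p_0<p_-$, averaging over $B(0,R)$, and applying H\"older's inequality in variable Lebesgue spaces (Lemma~\ref{lemma:holder}), the homogeneity Lemma~\ref{lemma:homog-exp}, and Kopaliani's estimate (Lemma~\ref{lemma:kopaliani}) to the dual factor $\|\chi_{B(0,R)}\|_{(\pp/p_0)'}$, I obtain
\[
|\Phi_t*f(x)|^{p_0}\leq \frac{1}{|B(0,R)|}\int_{B(0,R)}\mathcal{M}_{N,1}f(z)^{p_0}\,dz
\leq C\,\frac{\|\mathcal{M}_{N,1}f\|_\pp^{p_0}}{\|\chi_{B(0,R)}\|_\pp^{p_0}}.
\]
Taking the $p_0$-th root and invoking Lemma~\ref{lemma:bownik-NT} to bound $\|\mathcal{M}_{N,1}f\|_\pp$ by $C\|f\|_{H^\pp}$ handles this range.

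In the small-scale case $t<|x|+R$, I use $\supp f\subset B(0,R)$ to write $\Phi_t*f(x)=\langle f,\eta\Phi_t(x-\cdot)\rangle$ with $\eta\in C_c^\infty$ equal to $1$ on $B(0,R)$ and supported in $B(0,2R)$. For any $y_0\in B(0,R)$ and $s=3R$, the function $\tilde\Psi(v):=s^n\eta(y_0-sv)\Phi_t(x-y_0+sv)$ is supported in the closed unit ball and satisfies $\Phi_t*f(x)=\tilde\Psi_s*f(y_0)$. Since $|x-y_0+sv|\geq|x|-|y_0|-s\geq|x|/2$ for $v\in\supp\tilde\Psi$ (using $|x|>4R$), a Leibniz expansion combined with the Schwartz decay $|D^\beta\Phi(u)|\leq C(1+|u|)^{-N}$ for $|\beta|\leq N$ yields $\|\tilde\Psi\|_{\alpha,\beta}\leq c$ for $|\alpha|,|\beta|\leq N$, with $c$ uniform in $t$ and depending only on $n$, $N$, and the ratio $R/|x|<1/4$. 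Hence $\tilde\Psi/c\in\Ss_N$ and $|\Phi_t*f(x)|\leq c\,\Mg f(y_0)$; averaging over $y_0\in B(0,R)$ exactly as in the first case produces the same bound $C\|f\|_{H^\pp}/\|\chi_{B(0,R)}\|_\pp$. Taking the supremum over $\Phi\in\Ss_N$ and $t>0$ completes the proof.

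The main obstacle is the uniform seminorm estimate for $\tilde\Psi$ in the small-scale regime: Leibniz produces terms where derivatives of $\Phi_t(x-y_0+sv)$ in $v$ contribute factors $(s/t)^{|\beta-\gamma|}$ that appear to blow up as $t\to 0$. These must be absorbed by the Schwartz decay of $\Phi$, which contributes a compensating factor $(t/|x|)^N$; the inequality $R/|x|<1/4$ then produces a bound depending only on $n$ and $N$. Carrying out this Leibniz-rule cancellation term by term is the technical heart of the argument and is where the hypothesis $N>n/p_0+n+1$ enters through the requirement that the Schwartz decay dominate the scaling factors.
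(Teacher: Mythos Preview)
Your large-scale case ($t\geq |x|+R$) is fine and is essentially the same idea as the paper's Case~1.  The small-scale case, however, has a genuine gap: the claimed Leibniz cancellation does not go through.  With $s=3R$ fixed, differentiating $\tilde\Psi(v)=s^n\eta(y_0-sv)\Phi_t(x-y_0+sv)$ up to order $|\beta|\le N$ produces terms of size roughly
\[
(s/t)^{\,n+|\beta-\gamma|}\,\bigl|(\partial^{\beta-\gamma}\Phi)\bigl((x-y_0+sv)/t\bigr)\bigr|.
\]
Since $\Phi\in\Ss_N$ only guarantees \emph{uniform} decay of order $N$ (i.e., $|\partial^{\beta-\gamma}\Phi(u)|\le C(1+|u|)^{-N}$ with $C$ independent of $\Phi$), the best you can extract is a factor $(t/|x-y_0+sv|)^N$.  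For the term with $\gamma=0$ and $|\beta|=N$ this leaves
\[
(3R/t)^{\,n+N}\,(t/\text{dist})^{N}\;\gtrsim\; R^{n}\,t^{-n},
\]
which blows up as $t\to 0$.  No choice of $N$ (in particular not $N>n/p_0+n+1$) rescues this, because you need $N\ge n+|\beta-\gamma|$ while $|\beta-\gamma|$ ranges up to $N$.  A secondary issue is that the inequality $|x-y_0+sv|\ge |x|/2$ fails for $4R<|x|<8R$, but this is minor compared with the first problem.

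The fix, and this is exactly what the paper does, is to rescale by $t$ rather than by $R$ in the small-$t$ regime: write $f\ast\Phi_t(x)=f\ast\Psi_t(u)$ for a suitable $u$ near the origin, with $\Psi(z)=\Phi\bigl(\tfrac{x-u}{t}+z\bigr)\theta\bigl(\tfrac{u}{R}-\tfrac{t}{R}z\bigr)$.  Then $\partial^\gamma_z$ hits $\Phi$ at unit scale (no $(s/t)$ factors), while derivatives landing on the cutoff $\theta$ bring down harmless factors $(t/R)^{|\beta-\gamma|}\le 1$.  The needed decay in $z$ then follows from the observation that on $\supp\Psi$ one has $|z|\lesssim R/t$ and $|(x-u)/t+z|\gtrsim R/t$, so their ratio is bounded.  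With this modification your overall strategy---reduce to a uniform pointwise bound by $\inf_{B(0,R)}\Mg f$ and then average using H\"older and Lemma~\ref{lemma:kopaliani}---goes through exactly as you outlined.
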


\begin{proof}
  To prove the desired estimate, it will suffice to show that for any
  $\Phi \in \mathcal{S}_N$, $x \in B(0, 4R)^c$, and $t > 0$,
\[ |f*\Phi_t(x)| \leq C\| \chi_{B(0, R)} \|_{\pp}^{-1}, \]
where the constant $C$ is independent of $f$, $\Phi$, $x$ and $t$.  We
consider two cases, depending on the size of $t$.

{\bf Case 1: $t \geq R$.}  Given $x \in B(0, 4R)^c$ and $t \geq R$, we
claim that there exists  $\Psi \in \mathcal{S}$ so that $f \ast \Phi_t (x) = f \ast
\Psi_R (0)$.  Let  $\theta \in C_c^{\infty}$ be such that
$\supp(\theta) \subset B(0, 2)$ and $\theta = 1$ on $B(0, 1)$,  and define $\Psi(z) =
\Phi(\frac{x}{t} + \frac{Rz}{t}) \theta(z) (R/t)^n$.  Then
\begin{multline*}
		f \ast \Phi_t (x) 	
	= \int f(y) t^{-n} \Phi\left(\frac{x - y}{t}\right) dy \\
	= \int f(y) t^{-n} \ \underbrace{t^{-n} \Phi \left(\frac{x - y}{t}\right)
          \theta\left(\frac{y}{R}\right)}_{\Psi_R(0 - y)} dy 
= f \ast \Psi_R (0).
\end{multline*}

We actually have that $c\Psi \in \mathcal{S}_N$, where
$c=c(\theta,N)$. To see this, recall that since $\Phi\in \Ss_N$,
$\|\partial^\beta \Phi\|_\infty \leq c$ for all $|\beta| \leq N$.  Fix $z \in \supp(\Psi) = B(0, 2)$.  Then
for any multi-index $|\beta| \leq N$,
\[ |\p^{\beta} \Psi(z)|
\leq \left( \frac{R}{t} \right)^{n } 
\sum_{\gamma \leq \beta} {\beta \choose \gamma} 
\bigg| \p^{\gamma} \Phi\left(\frac{x + Rz}{t}\right ) \left( \frac{R}{t} \right)^{\gamma }  
\p^{\beta - \gamma} \theta\left(\frac{y}{R}\right) R^{-|\beta| + |\gamma|} \bigg|.
\]
Since $t\geq R>1$, we see that $|\p^{\alpha} \psi(z)| \leq  C(\theta,N)$.  Hence,
\[ \sup_{|\alpha|,|\beta|\leq N} \| \Psi \|_{\alpha,\beta} 
= \sup_{|\alpha|,|\beta| \leq N} \sup_{z \in B(0, 2)} |z^\alpha \p^{\beta} \Psi(z)|
\leq C(\theta, N). \]

Since $c(N,\theta)\Psi \in \Ss_N$, by Lemma~\ref{lemma:bownik-NT} we
have the pointwise bound
\begin{multline*}
 |f \ast \Phi_t (x)|
= |f \ast \Psi_R (0)| 
\leq \inf_{z \in B(0, R)} M_{\Psi, 1} f(z)  \\
\leq C(N,\theta)\inf_{z \in B(0, R)} \mathcal{M}_{N,1} f(z) 
		\leq C(N,\theta) \inf_{z \in B(0, R)} \Mg f(z).
              \end{multline*}
Therefore, by Lemmas~\ref{lemma:holder}, \ref{lemma:homog-exp}
and~\ref{lemma:kopaliani}, 
\begin{multline*}
|f \ast \Phi_t (x)|^{p_0}
 \leq C \avgint_{B(0, R)} \Mg f(z)^{p_0} dz \\
 \leq C|B(0,R)|^{-1}\|(\Mg f)^{p_0}\|_{\pp/p_0}\|\chi_{B(0,R)}\|_{(\pp/p_0)'} \\
 \leq C\|\Mg f\|_\pp^{p_0}\|\chi_{B(0,R)}\|_{\pp/p_0}^{-1} 
 \leq C \| f \|_{H^{\pp}}^{p_0}  \| \chi_{B(0, R)} \|_{\pp}^{-p_0},
\end{multline*}
where $C=C(N,\pp,p_0,n)$.  

\smallskip

{\bf Case 2: $t < R$.}  This case is similar to the previous one but
we need to construct $\Psi$ differently as we need our estimate to
hold at more points than the origin.  Fix $z\in B(0,R/2)$ and choose
$u\in B(0,R/2)$ such that $|z-u|<t$. We claim there exists $\Psi$
(depending on $u$, $t$ and $R$) such that $f \ast \Phi_t (x) = f \ast
\Psi_t (u)$.  As before, let $\theta \in C_c^{\infty}$ be supported on
$B(0, 2)$ and $\theta = 1$ on $B(0, 1)$. Define $\Psi$ by
\[	\Psi(z) = \Phi\left( \frac{x - u}{t} + z \right) 
\theta\left( \frac{u}{R} - \frac{t}{R} z\right ). \]
Then we have that
\[ 	f \ast \Phi_t (x)	
= \int f(y) \Phi_t (x - y) dy 
= \int f(y) \underbrace{\Phi_t (x - y) \theta(y/R)}_{\Psi_t (u - y)} dy  
	= f \ast \Psi_t (u). \]
Assume for the moment that $c(\theta,N)\Psi \in \Ss_N$.  
Then by Lemma~\ref{lemma:bownik-NT},
\[ |f*\Psi_t(u)| \leq M_{\Psi,1} f(z) \leq C(\theta,N)
\mathcal{M}_{N,1}f(z) \leq C(\theta,N)\Mg f(z). \]
Since this holds for every $z\in B(0,R/2)$, we have that 
\[  |f \ast \Phi_t (x)| \leq C(\theta,N)\inf_{z\in B(0,R/2)} \Mg
f(z), \]
and we can repeat the above argument to get the desired estimate.  

\smallskip

It remains to show that $c(\theta,N)\Psi \in \mathcal{S}_N$; it will
suffice to show that for all $\beta$ such that $|\beta| \leq N$,
\[ \ds \sup_{z \in \R^n} |\p^{\beta} \psi(z)| (1 + |z|)^N \leq C(\theta,N). \]
Since $\Phi\in \Ss_N$, for all $|\beta|\leq N$,
$(1+|y|)^N|\partial^\beta \Phi(y)| \leq c(N)$.  Therefore, by the
product rule, since $t<R$, 
\begin{multline*} 
|\p^{\beta} \Psi(z)| 
\leq \sum_{\gamma\leq\beta}{\beta \choose \gamma}
\bigg| \partial^\beta \Phi\left(\frac{x-u}{t}+z\right)\bigg|
\left(\frac{t}{R}\right)^{|\beta|-|\gamma|}
\bigg|\partial^{\beta-\gamma}\theta\left(\frac{u}{R}-\frac{t}{R}z\right)\bigg|\\
\leq \frac{C(\theta,N)}{(1+|\frac{x-u}{t}+z|)^N}.
\end{multline*}

To estimate the last term, note first that since $x \not\in B(0, 4R)$ and $u \in B(0, R/2)$, 
    \[\frac{|x - u|}{t} > \frac{7}{2}\frac{R}{t}. \]
Second, since the  $\theta$ term is non-zero only if $|\frac{u -
  zt}{R}| \leq 2$, we must have that $|\frac{u}{t} - z| <
\frac{2R}{t}$, which implies $|z| < |\frac{u}{t}| + 2\frac{R}{t} =
\frac{5}{2}\frac{R}{t}$. Together these two estimates show that  
$|\frac{x - u}{t} + z| > \frac{7}{2}\frac{R}{t} -
\frac{5}{2}\frac{R}{t} 
= \frac{R}{t}$.  Therefore, for $z \in \supp(\Psi)$,
\[ 	|\p^{\beta} \psi(z)| (1 + |z|)^N
\leq C \frac{(1 + |z|)^N}{(1 + |\frac{x - u}{t} + z|)^N}  \leq C
\left( \frac{1 + 3R/t}{1 + R/t} \right)^N \leq C(\theta, N). 
\]
This completes the proof.
\end{proof}

\begin{proof}[Proof of Theorem~\ref{Var-Thm}]
Since the infimum over finite sums in \eqref{eqn:VT1} is larger than
the infimum when taken over all possible atomic decompositions, by
Theorem~\ref{thm:qatoms} we have that $\| f \|_{H^{\pp}}
\leq C\| f \|_{H_{fin}^{\pp, q}}$. 

To prove the reverse inequality,  fix $f \in H_{fin}^{\pp, q}$.   By
homogeneity we may assume that $\| f \|_{H^{\pp}} = 1$;    we will show
that $\| f \|_{H_{fin}^{\pp,  q}} \leq C(N,\pp,p_0,q,n)$.
Since $f$ has a finite atomic decomposition, 
there exists $R>1$ such that $\supp(f)\subset B(0,R)$.  
By Lemma~\ref{lemma:decay-Mg},  
\begin{equation}\label{Key-PW}
\Mg f (x) \leq c \| \chi_{B(0, R)} \|_{\pp}^{-1}.
\end{equation}
Let $\Omega_j = \{ x : \Mg f(x) > 2^j \}$; define $j' = j'(f,
\pp)$ to be the smallest integer such that for all $j> j'$,
$\overline{\Omega}_j \subset B(0, 4R)$. By
\eqref{Key-PW} it suffices to take $j'$ to be the largest integer such that
$2^{j'} < c \| \chi_{B(0, R)} \|_{\pp}^{-1}$.

By Lemma~\ref{lemma:Atom-Stein} we can form the ``canonical''
decomposition of $f$ in terms of $(\pp,\infty)$ atoms:
\[ f = \sum_{j} \sum_k \lambda_{k,j} a_{k,j} = \sum_{j \leq j'} \sum_k
\lambda_{k,j} a_{k,j} +
 \sum_{j> j'} \sum_k \lambda_{k,j} a_{k,j} = h + \ell. \]
We will rewrite the sum $h+\ell$ as a finite atomic
decomposition in terms of $(\pp,q)$ atoms.  To do so, we will
use the finer properties of the atoms $a_{k,j}$ that are implicit in
the proof of Lemma~\ref{lemma:Atom-Stein}.  First,
$\supp(h),\,\supp(\ell)\subset B(0,4R)$.  
The atoms $a_{k,j}$ are supported in $\Omega_j$, so  by
our choice of $j'$, $\supp(\ell)\subset B(0,4R)$.  Since
$\supp(f)\subset B(0,R)$, we also have that $\supp(h)\subset
B(0,4R)$.  

Second, $h,\,\ell \in L^q$.  Since $f$ has a finite $(\pp,q)$ atomic
decomposition it is in $L^q$; since $q>1$ we also have that $\Mg f\in
L^q$.  If we fix $x\in \supp(\ell)$, then there exists $s>j'$ such that
$x \in \Omega_{s} \backslash \Omega_{s + 1}$.  By construction
(see~\eqref{eqn:patch2}) the sets $\supp(a_{k,j})$ have bounded overlap and
$|\lambda_{k,j} a_{k,j}|\leq c2^j$.  Hence,
\begin{equation} \label{eqn:patch}
\sum_{j >j'} \sum_k |\lambda_{k,j} a_{k,j} (x)|
= \sum_{j'<j \leq s} \sum_k|\lambda_{k,j} a_{k,j} (x)| 
\leq c \sum_{j \leq s} 2^j  =c2^{s+1} \leq c \Mg f(x).
\end{equation}
Thus $\ell \in L^q$, and so $h = f - \ell \in L^q$ as well.

Third, $h,\,\ell$ satisfy the vanishing moment condition for all
$|\alpha|\leq \lfloor n(1/p_0-1) \rfloor$.   Since $f$ is a finite sum of $(\pp,q)$
atoms, it has vanishing moments for these $\alpha$. 
Since $\supp(\ell)\subset B(0,4R)$, by H\"older's inequality, $\ell
\in L^1$. Moreover, given any monomial $x^\alpha$, by \eqref{eqn:patch}
\[ \bigg\| \sum_{j> j'} \sum_k |x^\alpha||\lambda_{k,j} a_{k,j}| \bigg\|_{L^1} 
\leq (4R)^{|\alpha|}\bigg\| \sum_{j > j'} \sum_k |\lambda_{k,j} a_{k,j}|
\bigg\|_{L^q}
 \cdot |B(0, 4R)|^{1/q'} < \infty. \]
Thus the sum on the left-hand side converges absolutely in $L^1$ and
so we can exchange sum and integral to get that $\ell$ has the same
vanishing moments as each $a_{k,j}$.   Finally, since $h = f - \ell$,
$h$ also has the same vanishing moments.

Fourth, there exists a constant $c$ such that $ch$ is a $(\pp,
\infty)$ atom supported on $B(0, 4R)$. To show this we only need to
check the size condition. Fix $x \in \R^n$; then by the same estimates
for $a_{k,j}$ we used above, 
\[ 	|h(x)| 	\leq \sum_{j \leq j'} \sum_k |\lambda_{k,j}
a_{k,j}(x)| \leq c \sum_{j \leq j'} 2^j \leq c2^{j'} \leq c \| \chi_{B(0, R)} \|_{\pp}^{-1},
\]
where the last follows by our choice of $j'$. 

Finally, we show that $\ell$ can be rewritten as a finite sum of
$(\pp,q)$ atoms.  Let $F_i = \{ (j, k) : |j| + |k| \leq i \}$ and
define the finite sum $\ell_i$ by
\[ \ell_i= \sum_{F_i} \lambda_{k,j} a_{k,j}.  \]
Since the sum for $\ell$ converges
absolutely in $L^q$, we can find $i$ such that $\|\ell-\ell_i\|_q$ is
as small as desired.  In particular, we can find $i$ such that $\ell -
\ell_i$ is a $(\pp, q)$ atom.

Therefore, 
\[ f = c(h/c) + (\ell - \ell_i) + \sum_{(j, k) \in F_i} \lambda_{k,j} a_{k,j} \]
is a finite decomposition of $f$ as $(\pp,q)$ atoms.   To complete the
proof we will use Lemma~\ref{lemma:Atom-Stein} to get the desired 
estimate on $\|f\|_{H^{\pp,q}_{fin}}$.  
Let $\tilde{B} = B(0, 4R)$. By the definition of the finite atomic
norm and Lemma~\ref{lemma:minkowski-low} (if $p_-<1$),
\begin{multline*}
  \| f \|_{H_{fin}^{\pp, q}}^{p_-}
\leq \bigg\|  \frac{c \chi_{\tilde{B}}}{\| \chi_{\tilde{B}}
    \|_{\pp}^{p_-}} 
+  \frac{\chi_{\tilde{B}}}{\| \chi_{\tilde{B}} \|_{\pp}} 
+ \sum_{(j, k) \in F_i} \frac{\lambda_{k,j} \chi_{B_{k, j}}}{\|
  \chi_{B_{k, j}} \|_{\pp}}  \bigg\|_{\pp}^{p_-} 	 \\
 \leq c^{p_-}+1+\bigg\|\sum_{(j, k) \in F_i} \frac{\lambda_{k,j} \chi_{B_{k, j}}}{\|
  \chi_{B_{k, j}} \|_{\pp}}  \bigg\|_{\pp}^{p_-} 	 
 \leq C+\bigg\|\sum_{j, k} \frac{\lambda_{k,j} \chi_{B_{k, j}}}{\|
  \chi_{B_{k, j}} \|_{\pp}}  \bigg\|_{\pp}^{p_-} 	 
 \leq C+C\|f\|_{H^\pp(\R^n)}
 \leq C.
\end{multline*}
This completes the proof.  
\end{proof}

\subsection{Finite atomic decompositions for weighted Hardy spaces}
We end this section by showing that a version of Theorem~\ref{Var-Thm}
holds for the weighted Hardy spaces.    This result is of interest in its own
right, but we give it primarily because we will need it in the next
section to prove the boundedness of singular integrals on $H^\pp$.   For this
reason we only prove one particular case; we leave it to the
interested reader to prove the more general result implicit
in our work.  

Let $\pp \in \MP$, and let $q>1$.  Given $w\in A_1$, define 
$H_{fin}^{p_0, q} (w)$ to be the set of all finite sums of $(\pp,q)$
atoms.    By the proof of Lemma~\ref{lemma:q-atoms-in-Hp}
 we have that for $q$ sufficiently large, $H_{fin}^{\pp, q} (\R^n) =
 H_{fin}^{p_0, q} (w)$ as sets.   Given $f \in
 H_{fin}^{p_0, q} (w)$, define a weighted atomic
 decomposition norm on $H_{fin}^{p_0, q} (w)$ by 
	\[ \| f \|_{H_{fin}^{p_0, q} (w)} = \inf \bigg\{ \bigg\| 
\sum_{j = 1}^k \lambda_j^{p_0} \frac{\chi_{B_j}}{\|\chi_{B_j}\|_\pp ^{p_0}}
\bigg\|_{L^1 (w)}^{1/p_0} :
 f = \sum_{j = 1}^k \lambda_j a_j  \bigg\},  \] 
where the infimum is taken over all decompositions of f as
a finite sum of $(\pp,q)$  atoms.

\begin{lemma}\label{lemma:Weighted-FinDecomp} Given $\pp \in \MP$, fix
  $q$ as in Theorem~\ref{thm:qatoms} and let $w \in A_1 \cap L^{(\pp/p_0)'} (\R^n)$.
Then there exists $C = C(\pp, p_0, [w]_{A_1}, \| w \|_{(\pp/p_0)'})$ such that 
		\[ \| f \|_{H_{fin}^{p_0, q} (w)} \leq C \| f \|_{H^{p_0} (w)}. \]  
\end{lemma}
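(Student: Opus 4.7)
The plan is to adapt the proof of Theorem~\ref{Var-Thm} to the weighted setting, replacing the $p_-$-power Minkowski manipulations with the genuine triangle inequality in $L^1(w)$ and replacing Lemma~\ref{lemma:decay-Mg} by a weighted analog. Rescale so that $\|f\|_{H^{p_0}(w)} = 1$; since $f \in H^{p_0,q}_{fin}(w)$ is a finite sum of $(\pp,q)$ atoms, $\supp(f) \subset B(0,R)$ for some $R > 1$.

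The crux of the argument will be a weighted decay estimate: for $x \in B(0,4R)^c$,
\[ \Mg f(x) \leq C\, w(B(0,R))^{-1/p_0}. \]
To prove this I will repeat the auxiliary Schwartz-function construction of Lemma~\ref{lemma:decay-Mg} verbatim to obtain the pointwise inequality $|f\ast \Phi_t(x)| \leq C(N,\theta) \inf_{z \in B(0,R)} \Mg f(z)$ in both the $t \geq R$ and $t < R$ cases. In place of the H\"older-with-Kopaliani estimate that finished that lemma, I will apply the elementary bound $\essinf_B g \leq w(B)^{-1}\int_B g\, w\,dz$ to $g = (\Mg f)^{p_0}$:
\[ |f \ast \Phi_t(x)|^{p_0} \leq \frac{C}{w(B(0,R))}\int_{B(0,R)} \Mg f(z)^{p_0} w(z)\,dz \leq \frac{C}{w(B(0,R))}\,\|f\|_{H^{p_0}(w)}^{p_0}. \]

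Let $j'$ be the smallest integer with $2^{j'} \geq C\, w(B(0,R))^{-1/p_0}$, so $\overline{\Omega}_j \subset B(0,4R)$ for all $j > j'$. Apply Lemma~\ref{lemma:Atom-Stein} to produce the canonical $(\pp,\infty)$ decomposition $f = \sum_{j,k}\lambda_{k,j}a_{k,j}$ with $\lambda_{k,j} = c\,2^j\|\chi_{B_{k,j}}\|_\pp$, and split at level $j'$ to write $f = h + \ell$. The steps in the proof of Theorem~\ref{Var-Thm} giving $\supp(h), \supp(\ell) \subset B(0,4R)$, $h,\ell \in L^q$, and the correct vanishing moments all carry over without change, as does the pointwise bound $|h(x)| \leq c\,2^{j'} \leq C\, w(B(0,R))^{-1/p_0}$. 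Use this to write $h = \lambda_h \tilde a_h$ with $\tilde a_h$ a $(\pp,\infty)$ atom on $B(0,4R)$ and $\lambda_h = C\, w(B(0,R))^{-1/p_0}\|\chi_{B(0,4R)}\|_\pp$. Since $w \in A_1$ is doubling (Remark~\ref{remark:atom-switch}), $w(B(0,4R)) \leq C[w]_{A_1}w(B(0,R))$, so
\[ \bigg\|\lambda_h^{p_0}\frac{\chi_{B(0,4R)}}{\|\chi_{B(0,4R)}\|_\pp^{p_0}}\bigg\|_{L^1(w)} = C\,\frac{w(B(0,4R))}{w(B(0,R))} \leq C[w]_{A_1}. \]
Truncate $\ell$ to a partial sum $\ell_i$ so that $\ell - \ell_i$ is a $(\pp,q)$ atom on $B(0,4R)$ (possible since $\ell \in L^q$ and the partial sums converge in $L^q$); this produces the same sort of constant contribution.

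For the finite inner sum $\sum_{F_i}\lambda_{k,j} a_{k,j}$, expand to the full series and use bounded overlap together with $\sum_{j \leq j_0} 2^{jp_0} \leq 2\cdot 2^{j_0 p_0}$ to obtain
\[ \bigg\|\sum_{j,k}\lambda_{k,j}^{p_0}\frac{\chi_{B_{k,j}}}{\|\chi_{B_{k,j}}\|_\pp^{p_0}}\bigg\|_{L^1(w)} \leq C\int_{\R^n}\Mg f(x)^{p_0} w(x)\,dx = C\,\|f\|_{H^{p_0}(w)}^{p_0} = C. \]
The $L^1(w)$-triangle inequality, applied to the defining expression for $\|f\|_{H^{p_0,q}_{fin}(w)}^{p_0}$, then combines the three contributions into the required bound. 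The finiteness of $\|w\|_{(\pp/p_0)'}$ enters implicitly to guarantee that every $(\pp,q)$ atom lies in $H^{p_0}(w)$ so that the normalization is legitimate. The main obstacle is the weighted decay estimate in the second paragraph: once it is in hand, the remainder of the argument is a routine adaptation of the proof of Theorem~\ref{Var-Thm}, with the weighted $L^1$-norm of the coefficient sum playing the role of the variable $L^\pp$-norm.
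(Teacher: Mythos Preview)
Your argument is correct, and it takes a genuinely different route from the paper's.  The paper does \emph{not} prove a weighted decay estimate; it simply reuses Lemma~\ref{lemma:decay-Mg} as stated, so that $\Mg f(x)\le c\|\chi_{B(0,R)}\|_\pp^{-1}$ off $B(0,4R)$, and takes $j'$ accordingly.  It also does not use the canonical decomposition of Lemma~\ref{lemma:Atom-Stein}: instead it invokes the Str\"omberg--Torchinsky weighted atomic decomposition, which directly supplies the inequality
\[
\bigg\|\sum_{k,j}\lambda_{k,j}^{p_0}\frac{\chi_{B_{k,j}}}{\|\chi_{B_{k,j}}\|_\pp^{p_0}}\bigg\|_{L^1(w)}\le C\|f\|_{H^{p_0}(w)}^{p_0},
\]
whereas you recover this from the explicit form $\lambda_{k,j}=c\,2^j\|\chi_{B_{k,j}}\|_\pp$ and the pointwise bound $\sum_j 2^{jp_0}\chi_{\Omega_j}\le C(\Mg f)^{p_0}$.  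Finally, the paper handles the $h$ and $\ell-\ell_i$ contributions by bounding $w(\tilde B)/\|\chi_{\tilde B}\|_\pp^{p_0}$ via H\"older's inequality and the hypothesis $w\in L^{(\pp/p_0)'}$; you instead scale $h$ by $\lambda_h=C\,w(B)^{-1/p_0}\|\chi_{\tilde B}\|_\pp$ and use the $A_1$ doubling property to get $w(\tilde B)/w(B)\le C[w]_{A_1}$.  Your route is more self-contained (it does not appeal to the Str\"omberg--Torchinsky construction) and makes the role of $\|w\|_{(\pp/p_0)'}$ less visible; the paper's route is shorter because it black-boxes the weighted decomposition.  One small point: for ``the same sort of constant contribution'' from $\ell-\ell_i$ you should truncate far enough that $(\ell-\ell_i)/\lambda_h$, not just $\ell-\ell_i$, is a $(\pp,q)$ atom, so that its coefficient is again $\lambda_h$; otherwise you are back to needing the H\"older estimate with $\|w\|_{(\pp/p_0)'}$.
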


\begin{remark}
  We note in passing that Lemma~\ref{lemma:Weighted-FinDecomp} is not the
  same as \cite[Theorem 6.2]{MR2492226} because the
  the atoms given there are defined using the weighted $L^q$-norm, and we
  cannot pass between these two types of atoms simply by
  multiplying by a constant.
\end{remark}

\begin{proof}  
  The proof is very similar to the proof of Theorem~\ref{Var-Thm};
  here we sketch the changes required.  Fix $f \in H_{fin}^{p_0, q}
  (w)$; then $f \in H_{fin}^{\pp, q}(\R^n)$, and is supported on a
  ball $B=B(0, R)$ for some $R > 1$.  Let $\tilde{B} = B(0, 4R)$. By Lemma \ref{lemma:decay-Mg}, for $x
  \not\in \tilde{B}$, we have $\Mg f(x) \leq c \| \chi_{B}
  \|_{\pp}^{-1}$. 

 Assume that $\| f\|_{H^{p_0} (w)} = 1$;  we will show that  $\| f \|_{H_{fin}^{p_0, q}
  (w)} \leq C$.  By the proof of  \cite[Chapter 8, Theorem 1]{MR1011673} we have that 
\[ f = \sum_{k, j} \lambda_{k, j} a_{k, j} \]
where $\{a_{k,j}\}$ are  $(\pp,\infty)$ atoms supported on balls $B_{k,j}$, $\{\lambda_{k,j}\}$ are
non-negative, and 
\begin{equation} \label{eqn:wtd-atom}
 \bigg\| \sum_{k, j} \lambda_{k, j}^{p_0} \frac{\chi_{B_{k,j}}}{\|\chi_{B_{k, j}}\|_\pp^{p_0}}
\bigg\|_{L^1 (w)} \leq C \| f \|_{H^{p_0} (w)}^{p_0}.
\end{equation}
(As we noted in Remark~\ref{remark:atom-switch}, this is a restatement
of the results from \cite{MR1011673} to our setting.)
This decomposition is constructed in a fashion very similar to that of
Lemma~\ref{lemma:Atom-Stein} and the atoms and coefficients have much
the same properties.  Therefore, if we let $j'$ be the smallest
integer such that $2^{j'} \leq \|\chi_B\|_\pp^{-1}$ and 
regroup the sum as
	\[ f = \sum_{j \leq j'} \sum_k {\lambda}_{k, j} {a}_{k, j} 
+ \sum_{j > j'} \sum_k {\lambda}_{k, j} {a}_{k, j} = h + \ell,\]
the argument proceeds exactly as before.   This allows us to write 
\[ f = c (h/c) + (\ell - \ell_i) + \sum_{F_i} {\lambda}_{k, j} {a}_{k, j},  \]
where $h$ is a $(\pp,\infty)$ atom,  $i$ is chosen large enough that
$(\ell-\ell_i)$ is $(\pp,q)$ atom, and the sum is a finite sum of
$(\pp,\infty)$ atoms.  Moreover, we have that
\begin{multline*}
\| f \|_{H_{fin}^{p_0, q} (w)}^{p_0} 
\leq \bigg\| c^{p_0}\frac{\chi_{\tilde{B}}}{\|\chi_{\tilde{B}}\|_\pp^{p_0}}
+  \frac{\chi_{\tilde{B}}}{\|\chi_{\tilde{B}}\|_\pp^{p_0}}
 + \sum_{k,j} {\lambda}_{k, j}^{p_0} \frac{\chi_{B_{k,
       j}}}{\|\chi_{B_{k, j}}\|_\pp}  \bigg\|_{L^1 (w)} \\
\leq C\frac{w(\tilde{B})}{\|\chi_{\tilde{B}}\|_\pp^{p_0}} +
\bigg \| \sum_{k, j} {\lambda}_{k, j}^{p_0} \chi_{B_{k, j}} \bigg\|_{L^1 (w)}.
\end{multline*}
By \eqref{eqn:wtd-atom}, since the $\lambda_{k,j}$ are non-negative,
the last term is bounded by $C\|f\|_{H^{p_0}(w)}^{p_0} =C$.  To bound the first
term, note that 
by Lemmas~\ref{lemma:holder}
  and~\ref{lemma:homog-exp}, 
\begin{multline*}
 w (\tilde{B}) = \int_{\tilde{B}} w (x) dx 
\leq C(\pp,p_0)\| \chi_{\tilde{B}} \|_{\pp/p_0} \| w \|_{(\pp/p_0)'} \\
\leq C (\pp,p_0,\| w \|_{(\pp/p_0)'} )\| \chi_{\tilde{B}} \|_{\pp}^{p_0}.  
\end{multline*}
Hence, $\| f \|_{H_{fin}^{p_0, q} (w)} \leq C(\pp, p_0, [w]_{A_1}, \|
w \|_{(\pp/p_0)'})$ and our proof is complete.
\end{proof}

\section{Boundedness of operators on $H^{\pp}$} 
\label{section:sio}

In this section we show that convolution type Calder\'on-Zygmund
singular integrals with sufficient regularity are bounded on $H^\pp$.
Our two main techniques are the finite atomic decomposition from
Section~\ref{section:pq-atoms} and weighted norm inequalities.   First
we define the class of singular integrals we are interested in.

    \begin{definition} 	\label{eqn:kregular}
Let $K \in \mathcal{S}'$. We say $Tf = K \ast f$ is a convolution-type
singular integral operator with regularity of order $k$ if the distribution $K$ coincides with a
function on $\R^n\setminus \{0\}$ and has the following
properties:
\begin{enumerate}

\item  $\hat{K} \in L^{\infty}$;

\item for all multi-indices $0 \leq |\beta| \leq k + 1$ and $x \neq 0$,
$ \ds |\p^{\beta} K(x)| \leq \frac{C}{ |x|^{n+ |\beta|}}$.
    \end{enumerate}
\end{definition}

Singular integrals that satisfy this definition are bounded on $L^p$,
$1<p<\infty$.  More importantly, the pointwise smoothness conditions
guarantee that they satisfy weighted norm inequalities.  In
particular, we have the following weighted Kolmogorov inequality; for
a proof, see~\cite{duoandikoetxea01,garcia-cuerva-rubiodefrancia85}.

\begin{lemma} \label{lemma:kolmogorov}
Let $T$ be a convolution-type singular integral operator as defined
above. Given $w\in A_1$ and $0<p<1$, then for every ball $B$,
\[ \int_{B} |Tf(x)|^{p}w(x)\,dx
\leq C(T,n,p,[w]_{A_1})w(B)^{1-p}\left(\int_\subRn
  |f(x)|w(x)\,dx\right)^p. \] 
\end{lemma}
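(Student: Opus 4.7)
The plan is to deduce the weighted Kolmogorov inequality from a weighted weak-type $(1,1)$ estimate for $T$ together with a standard distribution-function argument. Specifically, since the kernel $K$ satisfies the smoothness estimates of Definition~\ref{eqn:kregular} (in particular, the first-derivative bound implies a H\"ormander-type cancellation condition), a classical theorem of Coifman--Fefferman gives that for any $w \in A_1$,
\begin{equation*}
w\bigl(\{x \in \R^n : |Tf(x)| > \lambda\}\bigr) \leq \frac{C(T,n,[w]_{A_1})}{\lambda} \int_\subRn |f(x)| w(x)\,dx, \qquad \lambda > 0.
\end{equation*}
This weighted weak-type $(1,1)$ inequality is the only substantive analytic input; it is standard and can be found in either of the references already cited in the excerpt.

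Given this bound, the Kolmogorov inequality would follow by a truncation and optimization argument. First I would write, via the layer-cake formula,
\begin{equation*}
\int_B |Tf(x)|^p w(x)\,dx = p \int_0^\infty \lambda^{p-1}\, w\bigl(\{x \in B : |Tf(x)| > \lambda\}\bigr)\,d\lambda,
\end{equation*}
and split the outer integral at some threshold $\lambda_0 > 0$ to be chosen. On $(0,\lambda_0]$ I use the trivial estimate $w(\{x \in B : |Tf(x)| > \lambda\}) \leq w(B)$, which contributes $w(B)\lambda_0^p$. On $(\lambda_0,\infty)$ I use the weighted weak-type estimate; since $p - 2 < -1$, the resulting integral $\int_{\lambda_0}^\infty \lambda^{p-2}\,d\lambda$ converges and gives a contribution of order $(1-p)^{-1}\|f\|_{L^1(w)}\lambda_0^{p-1}$.

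Finally I would choose $\lambda_0 = \|f\|_{L^1(w)}/w(B)$, which balances the two terms: each becomes a constant multiple of $w(B)^{1-p}\|f\|_{L^1(w)}^p$, yielding exactly the claimed inequality with a constant depending only on $T$, $n$, $p$, and $[w]_{A_1}$. The only step that requires genuine work is verifying the weighted weak-type $(1,1)$ bound, which in turn rests on a Calder\'on--Zygmund decomposition of $f$ with respect to the measure $w\,dx$: here the $A_1$ condition is what allows one to control the ``good'' part in $L^2(w)$ and to handle the ``bad'' part using the H\"ormander cancellation property of $K$ together with the pointwise domination $Mw \leq [w]_{A_1} w$. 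Once that step is in hand, the layer-cake/optimization argument above is completely routine.
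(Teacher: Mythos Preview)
The paper does not actually prove this lemma; it simply states it and cites \cite{duoandikoetxea01,garcia-cuerva-rubiodefrancia85} for a proof. Your argument is correct and is precisely the standard one found in those references: the weighted weak-type $(1,1)$ bound for Calder\'on--Zygmund operators with $A_1$ weights (which follows from the H\"ormander condition implied by the derivative bounds in Definition~\ref{eqn:kregular}), followed by the layer-cake/truncation-and-optimization argument to upgrade weak $(1,1)$ to the Kolmogorov estimate. There is nothing further to compare.
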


Our main results in this section are the following two theorems.

\begin{theorem} \label{thm:bdd-Lp} Given $\pp \in M\mathcal{P}_0$ and
  $q>1$ sufficiently large (as in Theorem \textup{\ref{thm:qatoms}}),
  let $T$ be a singular integral operator that has regularity of
  order $k \geq \lfloor n (\frac{1}{p_0} - 1) \rfloor $.  Then
\[ \|Tf\|_\pp \leq C(T,\pp,p_0,q,n)\|f\|_{H^\pp}. \]
\end{theorem}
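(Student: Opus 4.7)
The plan is to combine the finite atomic decomposition (Theorem~\ref{Var-Thm}) with weighted norm inequalities for $T$ and the Rubio de Francia iteration algorithm developed in the proof of Lemma~\ref{lemma:q-atoms-in-Hp}. By Theorems~\ref{thm:qatoms} and~\ref{Var-Thm}, $H_{fin}^{\pp,q}$ is dense in $H^\pp$ (the partial sums of the $(\pp,q)$ atomic decomposition of any $f\in H^\pp$ converge in $H^\pp$), so it suffices to prove $\|Tf\|_\pp \leq C\|f\|_{H^{\pp,q}_{fin}}$ for $f = \sum_{j=1}^k \lambda_j a_j \in H_{fin}^{\pp,q}$ and extend by density; note that since $f\in L^q$ and $T$ is bounded on $L^q$, $Tf$ is a well-defined function.

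For $p_-\leq 1$ (the other case being analogous), I would dualize using Lemmas~\ref{lemma:holder} and~\ref{lemma:homog-exp}:
\[ \|Tf\|_\pp^{p_0} = \|(Tf)^{p_0}\|_{\pp/p_0} \leq C\sup_h \int_\subRn |Tf(x)|^{p_0} h(x)\,dx, \]
where the supremum runs over non-negative $h$ with $\|h\|_{(\pp/p_0)'}\leq 1$. Replacing $h$ by its Rubio de Francia iterate $\mathcal{R} h$ (formed as in the proof of Lemma~\ref{lemma:q-atoms-in-Hp}), we have $\mathcal{R} h\geq h$, $\|\mathcal{R} h\|_{(\pp/p_0)'}\leq 2$, and, by Lemma~\ref{lemma:sharp-RH} together with the choice of $q$, $\mathcal{R} h \in A_1 \cap RH_{(q/p_0)'}$ with weight constants depending only on $\pp$, $p_0$, and $n$. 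The problem therefore reduces to bounding $\|Tf\|_{L^{p_0}(\mathcal{R} h)}$ uniformly in $h$. I would do this in two steps. First, establish the weighted boundedness $T:H^{p_0}(w)\to L^{p_0}(w)$ for $w\in A_1\cap RH_{(q/p_0)'}$ with constants controlled by the weight characteristics: the finite atomic decomposition of $H^{p_0}(w)$ from Lemma~\ref{lemma:Weighted-FinDecomp} reduces this to a uniform atomic estimate, which is obtained by applying the weighted Kolmogorov inequality (Lemma~\ref{lemma:kolmogorov}) on $2B_j$ and the kernel smoothness/vanishing-moment estimate~\eqref{Atom-pw} on $(2B_j)^c$. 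Second, apply Lemma~\ref{lemma:ST-atoms} to the fixed finite $(\pp,q)$ decomposition of $f$ to obtain
\[ \|f\|_{H^{p_0}(\mathcal{R} h)} \leq C \bigg\| \sum_j \lambda_j \frac{\chi_{B_j}}{\|\chi_{B_j}\|_\pp}\bigg\|_{L^{p_0}(\mathcal{R} h)}. \]
Combining the two weighted estimates and then using Lemmas~\ref{lemma:holder} and~\ref{lemma:homog-exp} (with $\|\mathcal{R} h\|_{(\pp/p_0)'}\leq 2$) to pass from $L^{p_0}(\mathcal{R} h)$ back to $L^\pp$ yields $\|Tf\|_\pp^{p_0} \leq C \big\|\sum_j \lambda_j \chi_{B_j}/\|\chi_{B_j}\|_\pp\big\|_\pp^{p_0}$; taking the infimum over all finite $(\pp,q)$ atomic decompositions of $f$ and invoking Theorem~\ref{Var-Thm} concludes the proof.

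The main conceptual obstacle is that a direct atom-by-atom estimate of $\int|Tf|^{p_0}\mathcal{R} h\,dx$ via Kolmogorov produces an $\ell^{p_0}$-sum $\sum_j \lambda_j^{p_0}\chi_{B_j}/\|\chi_{B_j}\|_\pp^{p_0}$, which is precisely the Nakai-Sawano quantity from the opening remark and is strictly larger than the $\ell^1$-sum used in our atomic decomposition norm; routing through the weighted Hardy space $H^{p_0}(\mathcal{R} h)$ and invoking Lemma~\ref{lemma:ST-atoms} is exactly what converts the $\ell^{p_0}$-sum into the needed $\ell^1$-sum. The chief technical obstacle is ensuring the Rubio de Francia constants are uniform in $h$, which is why $q$ must be chosen as in Theorem~\ref{thm:qatoms} and why the quantitative reverse Hölder estimate of Lemma~\ref{lemma:sharp-RH} (giving $\mathcal{R} h\in RH_{(q/p_0)'}$) is essential.
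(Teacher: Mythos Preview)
Your approach is correct and is essentially the paper's: Rubio de Francia extrapolation, the weighted finite atomic decomposition (Lemma~\ref{lemma:Weighted-FinDecomp}), and the atomic bound obtained from Kolmogorov on $2B$ plus the kernel-regularity/vanishing-moment decay on $(2B)^c$. The paper organizes this slightly differently---it first isolates a general sublinear-operator criterion (Theorem~\ref{thm:gen-bdd}) and, once it has $\|Tf\|_{L^{p_0}(\mathcal{R}g)}\le C\|f\|_{H^{p_0}(\mathcal{R}g)}$, passes directly to $\|f\|_{H^\pp}$ via $\int(\Mg f)^{p_0}\mathcal{R}g\,dx\le C\|\Mg f\|_\pp^{p_0}\|\mathcal{R}g\|_{(\pp/p_0)'}$, which is shorter than your detour through Lemma~\ref{lemma:ST-atoms} and Theorem~\ref{Var-Thm}. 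One small technical caveat: the paper works with $(\pp,q/p_0)$ atoms rather than $(\pp,q)$ atoms, and you should too. After Kolmogorov on $2B$ you must H\"older $\int_B|a|w$ against the atom's Lebesgue exponent, so with $(\pp,q)$ atoms you would need $w\in RH_{q'}$; but the threshold on $q$ in Theorem~\ref{thm:qatoms} only forces $\mathcal{R}h\in RH_{(q/p_0)'}$, and $(q/p_0)'<q'$ when $p_0<1$. Switching to $(\pp,q/p_0)$ atoms (which are still dense since $(\pp,\infty)$ atoms are $(\pp,q/p_0)$ atoms) aligns the exponents and closes the argument.
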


\medskip

\begin{theorem} \label{thm:bdd-Hp}
Given $\pp \in M\mathcal{P}_0$ and
  $q>1$ sufficiently large (as in Theorem \textup{\ref{thm:qatoms}}),
  let $T$ be a singular integral operator that has regularity of
  order $k \geq \lfloor n (\frac{1}{p_0} - 1) \rfloor $.  Then
\[ \|Tf\|_{H^\pp} \leq C(T,\pp,p_0,q,n)\|f\|_{H^\pp}. \]
\end{theorem}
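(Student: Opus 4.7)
The strategy is to reduce to the dense subspace $H^{\pp,q}_{fin}$ using Theorem~\ref{Var-Thm}, and then to combine a duality argument via the Rubio de Francia iteration algorithm with the classical boundedness of $T$ on weighted Hardy spaces $H^{p_0}(w)$ for $w \in A_1$. The outline parallels the proof of Lemma~\ref{lemma:q-atoms-in-Hp}, with $T$ inserted at the appropriate stages.

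First I would reduce to finite atomic decompositions. Since $H^{\pp,q}_{fin}$ is dense in $\Hp$ (by Theorem~\ref{Thm-Atomic}, as finite linear combinations of $(\pp,\infty)$ atoms are also $(\pp,q)$ atoms and are dense), it suffices to show that for every finite $(\pp,q)$-atomic sum $f = \sum_{j=1}^N \lambda_j a_j$ with supports $B_j$,
\[
\|Tf\|_{\Hp} \leq C \bigg\|\sum_{j=1}^N \lambda_j \frac{\chi_{B_j}}{\|\chi_{B_j}\|_\pp} \bigg\|_\pp.
\]
Taking the infimum over decompositions and invoking Theorem~\ref{Var-Thm} gives $\|Tf\|_{\Hp} \le C\|f\|_{\Hp}$ on $H^{\pp,q}_{fin}$. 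Completeness of $\Hp$ (Proposition~\ref{prop:complete}), together with the continuity of $T$ on $\Ss'$, then extends the bound to all of $\Hp$: if $f_n \to f$ in $\Hp$ then $f_n \to f$ in $\Ss'$, and hence $Tf_n \to Tf$ in $\Ss'$, which identifies the $\Hp$-limit of $\{Tf_n\}$ as $Tf$.

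For the core estimate, note that $f \in L^q$ so $Tf \in L^q$ by classical $L^q$-boundedness of $T$; in particular $Tf$ is a function. By Lemmas~\ref{lemma:holder} and~\ref{lemma:homog-exp},
\[
\|\Mr(Tf)\|_\pp^{p_0} = \|(\Mr(Tf))^{p_0}\|_{\pp/p_0} \leq C \sup_h \int_{\R^n} \Mr(Tf)(x)^{p_0}\, h(x)\,dx,
\]
the supremum taken over non-negative $h$ with $\|h\|_{(\pp/p_0)'} \le 1$. By Lemma~\ref{lemma:diening} the maximal operator is bounded on $L^{(\pp/p_0)'}$, so the Rubio de Francia iterate $\mathcal{R}h$ may be constructed as in the proof of Lemma~\ref{lemma:q-atoms-in-Hp}: $h \le \mathcal{R}h$, both $[\mathcal{R}h]_{A_1}$ and $[\mathcal{R}h]_{RH_{(q/p_0)'}}$ are bounded by constants depending only on $\pp$, $p_0$, $n$, and $\|\mathcal{R}h\|_{(\pp/p_0)'} \le 2$. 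Applying the classical weighted boundedness $\|Tg\|_{H^{p_0}(w)} \le C([w]_{A_1})\|g\|_{H^{p_0}(w)}$ for $w \in A_1$ and kernels of the prescribed regularity (see Str\"omberg--Torchinsky~\cite{MR1011673}), followed by Lemma~\ref{lemma:ST-atoms} applied to the finite decomposition of $f$, and then H\"older's inequality to pass from the weighted $L^{p_0}$ norm back to the $\pp$ norm, yields the desired bound uniformly in $h$.

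The main obstacle is the weighted $H^{p_0}(w)$-boundedness step: one must know that $\|T\|_{H^{p_0}(w) \to H^{p_0}(w)}$ depends only on $[w]_{A_1}$, so that when $w = \mathcal{R}h$ the constants remain uniform in $h$. This is standard for classical convolution Calder\'on--Zygmund operators with kernel regularity of order $k \ge \lfloor n(1/p_0-1) \rfloor$, established via a molecular estimate for $Ta$, but it requires explicit tracking of the $[w]_{A_1}$-dependence through the weighted atomic argument. As elsewhere in the paper, minor additional bookkeeping is needed to handle the case $p_- < 1$ versus $p_- \ge 1$ in the quasi-norm arithmetic.
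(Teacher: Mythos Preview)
Your approach is correct and close in spirit to the paper's, but the key step differs. The paper does not invoke a black-box weighted bound $T:H^{p_0}(w)\to H^{p_0}(w)$; instead it isolates a general framework (Theorem~\ref{thm:gen-bdd}) reducing everything to the single-atom estimate~\eqref{Condition:Hp-Hp}, and then verifies that estimate by hand for the given kernel: splitting $\|\Mr(Ta)\|_{L^{p_0}(w)}^{p_0}$ over $2B$ and $(2B)^c$, using the weighted Kolmogorov inequality (Lemma~\ref{lemma:kolmogorov}) and $RH_{(q/p_0)'}$ on $2B$, and a Taylor-remainder kernel bound plus the $A_1$ doubling sum on $(2B)^c$. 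The extrapolation step in Theorem~\ref{thm:gen-bdd} then needs the weighted \emph{finite} atomic decomposition (Lemma~\ref{lemma:Weighted-FinDecomp}) to pass from the atom-by-atom bound back to $\|f\|_{H^{p_0}(\mathcal{R}g)}$.

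Your route is shorter: by quoting the full weighted $H^{p_0}(w)$-boundedness of $T$ from Str\"omberg--Torchinsky you bypass both the explicit atom estimate and Lemma~\ref{lemma:Weighted-FinDecomp}. The price is exactly the point you flag---you must trust that the cited weighted bound carries constants depending only on $[w]_{A_1}$---whereas the paper's argument is self-contained and, via Theorem~\ref{thm:gen-bdd}, applies to any sublinear operator satisfying~\eqref{Condition:Hp-Hp}. One small simplification: your detour through Lemma~\ref{lemma:ST-atoms} is unnecessary, since after $\|Tf\|_{H^{p_0}(\mathcal{R}h)}\le C\|f\|_{H^{p_0}(\mathcal{R}h)}=C\|\Mg f\|_{L^{p_0}(\mathcal{R}h)}$ you can apply H\"older directly to land on $\|\Mg f\|_\pp=\|f\|_{H^\pp}$, avoiding both the atomic norm and the appeal to Theorem~\ref{Var-Thm}.
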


\medskip

We will prove both theorems as a consequence of a more general result
for sublinear operators. 

\begin{theorem} \label{thm:gen-bdd}
Given  $\pp \in M\mathcal{P}_0$ with $0<p_0<1$,  and  $q>1$ sufficiently large (as in
Theorem \textup{\ref{thm:qatoms}}),
suppose that $T$ is a sublinear operator that is defined on $(\pp,q)$
atoms.    Then:
\begin{enumerate}

\item  If  for all $w \in A_1\cap RH_{(q/p_0)'}$ and every $(\pp,
  q/p_0)$ atom $a(\cdot)$ with support $B$,
 \begin{equation}\label{Condition:Hp-Lp}
 \| Ta \|_{L^{p_0} (w)} \leq C(T,\pp,p_0,q,n,[w]_{A_1},[w]_{RH_{(q/p_0)'}}) \frac{w(B)^{1/p_0}}{\| \chi_{B} \|_{\pp}},
\end{equation}
then $T$ has a unique, bounded extension $\tilde{T} : H^{\pp}
\rightarrow L^{\pp}$.

\item If for all $w \in A_1\cap RH_{(q/p_0)'}$ and every $(\pp,
  q/p_0)$ atom $a(\cdot)$ with support  $B$,
\begin{equation}\label{Condition:Hp-Hp}
 \| Ta \|_{H^{p_0} (w)} \leq C(T,\pp,p_0,q,n,[w]_{A_1},[w]_{RH_{(q/p_0)'}}) \frac{w(B)^{1/p_0}}{\| \chi_{B} \|_{\pp}},
\end{equation}
then $T$ has a unique, bounded extension $\tilde{T} : H^{\pp}
\rightarrow H^{\pp}$.

\end{enumerate}
\end{theorem}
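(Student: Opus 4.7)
The plan is to prove both parts by a Rubio de Francia extrapolation argument combined with the weighted finite atomic decomposition from Lemma~\ref{lemma:Weighted-FinDecomp}. The central idea is that the size estimate \eqref{Condition:Hp-Lp} (resp.\ \eqref{Condition:Hp-Hp}) is assumed uniformly across the class $A_1 \cap RH_{(q/p_0)'}$ with a constant depending only on the relevant weight characteristics, so we may test it against any weight produced by the iteration algorithm provided we control those characteristics uniformly. Since $p_0<1$ we have $q/p_0>q$, so Lemma~\ref{lemma:Weighted-FinDecomp} still applies with $q/p_0$ in place of $q$. Once the estimate is proved on the dense subspace $H^{\pp,q/p_0}_{fin}\subset H^\pp$ (dense by Theorem~\ref{thm:qatoms} together with Proposition~\ref{Prop-Density}), the extension $\tilde T$ is obtained by continuity.

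I would first establish part (1). Fix $f \in H^{\pp,q/p_0}_{fin}$. By Lemmas~\ref{lemma:holder} and~\ref{lemma:homog-exp},
\[ \|Tf\|_\pp^{p_0} = \||Tf|^{p_0}\|_{\pp/p_0} \leq C \sup_h \int_{\R^n} |Tf(x)|^{p_0}\, h(x)\,dx, \]
where the supremum runs over non-negative $h \in L^{(\pp/p_0)'}$ with $\|h\|_{(\pp/p_0)'}\leq 1$. For each such $h$, form the Rubio de Francia iterate $w = \mathcal{R}h$ as in the proof of Lemma~\ref{lemma:q-atoms-in-Hp}; then $h\leq w$, $\|w\|_{(\pp/p_0)'}\leq 2$, and $w\in A_1$ with $[w]_{A_1}\leq C(\pp,p_0,n)$. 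By Lemma~\ref{lemma:sharp-RH} and the quantitative choice of $q$ in Theorem~\ref{thm:qatoms}, we also get $w\in RH_{(q/p_0)'}$ with $[w]_{RH_{(q/p_0)'}}\leq C(\pp,p_0,n)$.

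Next, invoke Lemma~\ref{lemma:Weighted-FinDecomp} (with $q/p_0$ in place of $q$ and with this weight $w$) to obtain a finite decomposition $f=\sum_{j=1}^{k}\lambda_j a_j$ into $(\pp,q/p_0)$ atoms supported on balls $B_j$ such that
\[ \sum_{j=1}^{k}\lambda_j^{p_0}\frac{w(B_j)}{\|\chi_{B_j}\|_\pp^{p_0}}\leq C\|f\|_{H^{p_0}(w)}^{p_0}. \]
Since $p_0\leq 1$ and $T$ is sublinear, $|Tf|^{p_0}\leq \sum_j \lambda_j^{p_0}|Ta_j|^{p_0}$ pointwise. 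Integrating against $w$ and applying \eqref{Condition:Hp-Lp} yields
\[ \int_{\R^n}|Tf|^{p_0}\,w\,dx \leq C\sum_{j=1}^k \lambda_j^{p_0}\frac{w(B_j)}{\|\chi_{B_j}\|_\pp^{p_0}} \leq C\|f\|_{H^{p_0}(w)}^{p_0}. \]
Finally, by H\"older's inequality in $L^{\pp/p_0}$, Lemma~\ref{lemma:homog-exp}, and $\|w\|_{(\pp/p_0)'}\leq 2$,
\[ \|f\|_{H^{p_0}(w)}^{p_0}=\int_{\R^n}\Mr(f)^{p_0}\, w\,dx \leq C\|\Mr(f)\|_\pp^{p_0}\leq C\|f\|_{H^\pp}^{p_0}, \]
with the constant independent of $h$. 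Combining these estimates with $h\leq w$ gives $\int|Tf|^{p_0}h\,dx\leq C\|f\|_{H^\pp}^{p_0}$ uniformly in $h$; taking the supremum and then extending by continuity completes part (1).

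For part (2), the argument is identical with $|Tf|^{p_0}$ replaced by $\Mr(Tf)^{p_0}$ throughout. The linearity of $T$ on finite sums of atoms together with the subadditivity of $\Mr$ gives $\Mr(Tf)^{p_0}\leq \sum_j\lambda_j^{p_0}\Mr(Ta_j)^{p_0}$, and one invokes \eqref{Condition:Hp-Hp} in place of \eqref{Condition:Hp-Lp}; everything else proceeds verbatim. The main obstacle in both parts is the bookkeeping in the extrapolation step: one must verify that $[\mathcal{R}h]_{A_1}$ and $[\mathcal{R}h]_{RH_{(q/p_0)'}}$ are bounded by constants depending only on $\pp$, $p_0$, $n$, $q$, uniformly in $h$, so that the hypothesis \eqref{Condition:Hp-Lp} or \eqref{Condition:Hp-Hp} yields a uniform constant as $h$ ranges over the unit ball of $L^{(\pp/p_0)'}$. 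This uniformity is precisely what the explicit lower bound on $q$ in Theorem~\ref{thm:qatoms}, combined with the sharp reverse H\"older constant of Lemma~\ref{lemma:sharp-RH}, is designed to furnish.
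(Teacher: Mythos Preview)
Your proof is correct and follows essentially the same route as the paper's: dualize via Lemma~\ref{lemma:holder}, run the Rubio de Francia iteration to produce a weight $w=\mathcal{R}h$ with $A_1$ and $RH_{(q/p_0)'}$ constants bounded uniformly in $h$, apply the atomic hypothesis together with the sublinearity of $T$ and the $p_0$-convexity, and close with the weighted finite atomic decomposition of Lemma~\ref{lemma:Weighted-FinDecomp}. The only cosmetic difference is that the paper bounds $\|Tf\|_{L^{p_0}(w)}^{p_0}$ by the atomic expression for an \emph{arbitrary} finite decomposition and then takes the infimum, whereas you invoke Lemma~\ref{lemma:Weighted-FinDecomp} to select a single near-optimal decomposition up front; these are logically equivalent. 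Two small points: the density of $H^{\pp,q/p_0}_{fin}$ in $H^\pp$ follows directly from Theorem~\ref{thm:qatoms} (Proposition~\ref{Prop-Density} is not needed here), and in part~(2) you write ``linearity of $T$'' where the hypothesis is only sublinearity---the paper glosses over the same step with ``replacing $Tf$ with $\Mr(Tf)$,'' and for the intended application to convolution singular integrals $T$ is in fact linear, so the argument goes through.
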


\begin{remark}
The additional hypothesis that $0<p_0<1$ is not a real restriction,
since by Lemma~\ref{lemma:max-up} we may take $p_0$ as small as
desired.
\end{remark}

\begin{remark} 
Note that when $\pp$ is constant and $w\equiv 1$, then
conditions~\eqref{Condition:Hp-Lp} and~\eqref{Condition:Hp-Hp} reduce
to showing that $T$ is uniformly bounded on atoms, which is the
condition used to prove singular integrals are bounded on
classical Hardy spaces.
\end{remark}

 \begin{proof}  
   First suppose that \eqref{Condition:Hp-Lp} holds.  Fix $f \in
   H_{fin}^{\pp, q/p_0}$; by Theorem~\ref{thm:qatoms} this set is
   dense in $H^\pp$.  Since $T$ is well-defined on the elements of
   $H_{fin}^{\pp, q/p_0}$, it will suffice to prove that
\begin{equation} \label{eqn:bdd-on-atoms}
 \|Tf\|_{\Lp} \leq C(T,\pp,p_0,q,n)\|f\|_{H^\pp}.
\end{equation}
For in this case by a standard density argument there exists a unique
bounded extension $\tilde{T}$ such that $\tilde{T} : H^{\pp}
\rightarrow L^{\pp}$.

To prove \eqref{eqn:bdd-on-atoms} we will use the 
extrapolation argument in Lemma \ref{lemma:ST-atoms} to reduce the
variable norm estimate to a weighted norm estimate.  Arguing
as we did in that proof, we have that 
	\[ \| Tf \|_{L^{\pp}}^{p_0} \leq \sup\int |Tf(x)|^{p_0} \mathcal{R}g(x) dx,  \] 
with the supremum taken over all $g \in L^{(\pp/p_0)'}$ with $\| g
\|_{(\pp/p_0)'} \leq 1$.   Suppose for the moment that we can prove
that for all $f\in H_{fin}^{\pp, q/p_0}$, 
\begin{equation}\label{Base:Hp-Lp}
\|Tf\|_{L^{p_0}(\mathcal{R} g)} \leq
C(T,\pp,p_0,q,n)\|f\|_{H^{p_0}(\mathcal{R} g)}. 
\end{equation}
(In particular, the constant is independent of $g$.)  Then we can
continue the argument as in the proof of Lemma \ref{lemma:ST-atoms} to get
 \begin{multline*}
 \| Tf \|_{L^{p_0} (\mathcal{R}g)}^{p_0} 
\leq C \| f \|_{H^{p_0} (\mathcal{R}g)}^{p_0} \leq C\int \Mg f(x)^{p_0} \mathcal{R}g(x) dx \\
\leq C \| (\Mg f)^{p_0} \|_{\pp/p_0} \ \| \mathcal{R}g \|_{(\pp/p_0)'} 
\leq C \| \Mg f \|_{\pp}^{p_0} \leq C\| f \|_{H^{\pp}}^{p_0}. 
\end{multline*}
This gives us~\eqref{eqn:bdd-on-atoms}.

\medskip

To complete the proof we will show \eqref{Base:Hp-Lp}.   Recall that
as sets, $H_{fin}^{p_0, q/p_0} (\mathcal{R}g) = H_{fin}^{\pp, q/p_0}$.
Therefore, let 
\[  f = \sum_{j = 1}^k {\lambda}_j {a}_j \]
be an arbitrary finite decomposition of $f$ in terms of $(\pp,q/p_0)$
atoms.   Since, $0 < p_0 < 1$,  by the sublinearity of $T$, convexity and \eqref{Condition:Hp-Lp},
\begin{multline*}
 \| Tf \|_{L^{p_0} (\mathcal{R}g)}^{p_0} = \int |Tf(x)|^{p_0} \mathcal{R}g (x) dx  
 \leq \sum_{j = 1}^k {\lambda}_j^{p_0} \int_{B_j} |T{a}_j (x)|^{p_0} \mathcal{R}g(x) dx \\
 \leq C \sum_{j = 1}^k {\lambda}_j^{p_0} \frac{\mathcal{R}g (B_j)}{\| \chi_{B_j} \|_{\pp}^{p_0}}
 = C\bigg\| \sum_{j = 1}^k {\lambda}_j^{p_0} \frac{\chi_{B_j}}{\|\chi_{B_j}\|_\pp^{p_0}} \bigg\|_{L^1 (\mathcal{R}g)}. 
 \end{multline*}
 This is true for any such decomposition of $f$.  Therefore, since
 $\mathcal{R}g \in A_1 \cap L^{(\pp/p_0)'}$ by construction,  by
 Lemma~\ref{lemma:Weighted-FinDecomp} we can take the infimum
 over all such decompositions to get $\| Tf
 \|_{L^{p_0} (\mathcal{R}g)} \leq C \| f \|_{H^{p_0} (\mathcal{R}g)}$, 
 where $C=C(T,\pp, p_0, q,n)$.   This proves 
 \eqref{Base:Hp-Lp} for all $f\in H_{fin}^{\pp, q/p_0}$.
	
\medskip

We now consider the case when condition~\eqref{Condition:Hp-Hp}
holds.  The proof is essentially the same as before, except instead of
proving \eqref{Base:Hp-Lp}, we need to prove that for all $f\in H_{fin}^{\pp, q/p_0}$, 
 \begin{equation}\label{Base:Hp-Hp}
\|Tf\|_{H^{p_0}(\mathcal{R} g)} \leq
C(T,\pp,p_0,q,n)\|f\|_{H^{p_0}(\mathcal{R} g)}. 
 \end{equation}
 Given this, we can then repeat the extrapolation argument as before.
 To prove~\eqref{Base:Hp-Hp} we use the same argument used to
 prove~\eqref{Base:Hp-Lp}, replacing $Tf$ with $\Mr (Tf)$ where
 $\Phi\in \Ss$ with $\int \Phi\,dx =1$, and using
 \eqref{Condition:Hp-Hp} instead of \eqref{Condition:Hp-Lp}.
	\end{proof}


\begin{proof}[Proof of Theorem~\ref{thm:bdd-Lp}]
By Theorem~\ref{thm:gen-bdd} it will suffice to show that
condition~\eqref{Condition:Hp-Lp} holds for all $(\pp,q/p_0)$ atoms and
all $w\in A_1\cap RH_{(q/p_0)'}$.  

Fix such an atom $a(\cdot)$ with support $B=B(x_0,r)$.  Let $2B =
B(x_0, 2r)$  and write
 \begin{multline*}
  \|Ta\|_{L^{p_0}(w)}^{p_0}
 = \int |Ta (x)|^{p_0} w (x)  dx \\
= \int_{2B} |Ta (x)|^{p_0} w(x) dx  + \int_{(2B)^c} |Ta (x)|^{p_0}
w(x) dx 
= I_1 + I_2.
\end{multline*}
We first estimate $I_1$. By Lemma~\ref{lemma:kolmogorov} there exists
a constant $C=C(T,n,p_0,[w]_{A_1})$ such that
 \begin{multline*}
 \int_{2B} |Ta (x)|^{p_0} w(x) dx 
\leq C w(B)^{1 - p_0} \left( \int_\subRn |a(x)| w(x) dx \right)^{p_0} \\
 \leq C w(B)^{1 - p_0} |B|^{p_0} \left( \avgint_B |a|^{q/p_0} dx \right)^{1/q} 
\left( \avgint_B w(x)^{(q/p_0)'} dx \right)^{p_0/(q/p_0)'}.
		\end{multline*}
Since $a(\cdot)$ is a $(\pp,q/p_0)$ atom and $w\in RH_{(q/p_0)'}$, we
get that 
\[ I_1 \leq C [w]_{RH_{(q/p_0)'}}^{p_0} \ w(B)^{1 - p_0} |B|^{p_0} \|
\chi_B \|_{L^{\pp}}^{-p_0} |B|^{-p_0} w(B)^{p_0} = C
[w]_{RH_{(q/p_0)'}}^{p_0} \ w(B) \| \chi_B \|_{L^{\pp}}^{-p_0}. \]

To estimate $I_2$, we start with a pointwise estimate.   Let $d = \lfloor n(\frac{1}{p_0} - 1) \rfloor$. 
We claim that there exists a constant $C = C(T, n)$ such that for all $x \in (2B)^c$,
\begin{equation}	\label{Ta-pointwise}
 |Ta(x)| \leq C \frac{|B|^{1 + \frac{d + 1}{n}}}{\| \chi_B \|_{\pp}} \cdot \frac{1}{|x - x_0|^{n + d + 1}}.
\end{equation}	
To prove this, let $P_d$ be the Taylor polynomial of $K$ of degree $d$
centered at $x - x_0$.  By our definition of $d$ and our assumption on $k$,
$d + 1 \leq k + 1$.   Therefore, the remainder $|K(x - y)| - P_d (y)|$ can be estimated by
Condition (2) in Definition~\ref{eqn:kregular}.  Hence, by the vanishing moment and size conditions on
$a(\cdot)$ and H\"older's inequality,
 \begin{align*}
 |Ta (x)|
 &\leq \int |K(x - y) - P_d (y)| |a(y)| dy  \\
& \leq \frac{C}{|x - x_0|^{n + d + 1}} \int_{B(x_0, r)} |y - x_0|^{d + 1} |a(y)| dy \\
 &\leq C \frac{r^{d + 1} |B| }{|x - x_0|^{n + d + 1}}\avgint_B a(y)\,dy \\
& \leq  C \frac{|B|^{\frac{n + d + 1}{n}} |B|^{-p_0/q}\|a\|_{q/p_0}}{|x
  - x_0|^{n + d + 1}} \\
& \leq  C \frac{|B|^{1 + \frac{d + 1}{n}}}{\| \chi_B \|_{\pp}} \cdot \frac{1}{|x - x_0|^{n + d + 1}}.
 \end{align*}
Given~\eqref{Ta-pointwise} we have that
\[	\int_{(2B)^c} |Ta(x)|^{p_0} w(x) dx 
 \leq C \frac{|B|^{p_0 (\frac{n + d + 1}{n})}}
{\| \chi_{B} \|_{\pp}^{p_0}} \underbrace{\int_{(2B)^c} \frac{w(x)}{|x - x_0|^{p_0 (n + d + 1)}} dx}_{J}. 
\]

To complete the proof we will show that there exists a constant $C = C(n, p_0)$ such that 
 \begin{equation}\label{Est:J}
 J \leq  C\frac{[w]_{A_1} w(B)}{|B|^{p_0 (\frac{n + d + 1}{n})}}.
\end{equation}
The proof of this is standard; for the convenience of the reader we
sketch the details.  Write  
\[ (2B)^c = \bigcup_{i =1}^{\infty} (2^{i + 1} B \backslash 2^i B); \]
then for $x \in 2^{i + 1} B \backslash 2^i B$, we have $|x - x_0|
\simeq 2^i r \simeq 2^i |B|^{1/n}$. Since $w \in A_1$ and $p_0 (n + d
+1) > n$, we can estimate as follows:
\begin{align*}
 J	
&= \sum_{i = 1}^{\infty} \int_{2^{i + 1} B \backslash 2^i B}  \frac{w(x)}{|x - x_0|^{p_0 (n + d + 1)}} dx \\ 
&\leq  \frac{C}{|B|^{p_0 (\frac{n + d + 1}{n})}} 
\sum_{i = 1}^{\infty} \frac{1}{2^{ip_0 (n + d + 1)}} \int_{2^{i + 1} B \backslash 2^i B} w(x) dx \\
 &=  \frac{C}{|B|^{p_0 (\frac{n + d + 1}{n})}} 
\sum_{i = 1}^{\infty} \frac{2^{n(i + 1)}|B| }{2^{ip_0 (n + d + 1)}} \ \avgint_{2^{i + 1} B} w(x) dx \\
 &\leq  \frac{C 2^n[w]_{A_1}}{|B|^{p_0 (\frac{n + d + 1}{n})}} 
\sum_{i = 1}^{\infty} \frac{1}{2^{ip_0 (n + d + 1)-in}} \left( |B| \ \essinf_{x \in B} w(x) \right) \\
& = \frac{C [w]_{A_1} w(B)}{|B|^{p_0 (\frac{n + d + 1}{n})}}.
	\end{align*}
This gives us~\eqref{Est:J} and so completes the proof.
	\end{proof}

\begin{proof}[Proof of Theorem~\ref{thm:bdd-Hp}]
  Our argument is similar to the proof of Theorem~\ref{thm:bdd-Lp}.
  By Theorem~\ref{thm:gen-bdd} it will suffice to show that
  condition~\eqref{Condition:Hp-Hp} holds for an arbitrary
  $(\pp,q/p_0)$ atom $a(\cdot)$ with support $B=B(x_0,r)$,  
  and all $w\in A_1\cap RH_{(q/p_0)'}$.  Fix $\Phi \in \mathcal{S}$
  with $\int \Phi = 1$; then we can estimate $\| Ta \|_{H^{p_0} (w)}$
  as follows:
	\begin{align*}
	\| Ta \|_{H^{p_0}(w)}^{p_0}
		&\lesssim \int_{2B} \Mr (Ta) (x)^{p_0} w(x) dx + \int_{(2B)^c} \Mr (Ta) (x)^{p_0} w(x) dx = J_1 + J_2. 
	\end{align*}
To estimate the $J_1$ we first use the fact that $\Mr (Ta) \leq c
M (Ta)$.  Moreover, we have that since $w\in A_1$, the
Hardy-Littlewood maximal operator also satisfies Kolmogorov's
inequality (see~\cite{duoandikoetxea01,garcia-cuerva-rubiodefrancia85}):
	\begin{align*}
	J_1 \leq C w(2B)^{1 - p_0}  \big(  \underbrace{\int_\subRn |Ta(x)| w(x) dx}_L \big)^{p_0}.
	\end{align*}
To get the desired estimate for $J_1$ it will suffice to show that
	\[ L = \int_\subRn |Ta(x)| w(x) dx \leq \frac{w (B)}{\| \chi_{B} \|_{\pp}}. \] 
To prove this, we again split the integral:
	\[ L = \int_{2B} |Ta(x)| w(x) dx + \int_{(2B)^c} |Ta(x)| w(x) dx = L_1 + L_2. \] 
To estimate $L_1$ we apply H\"{o}lder's inequality, the boundedness of
$T$ on $L^{q/p_0}$, and the fact that $w\in RH_{(q/p_o)'}$ to get 
 \begin{multline*}
 L_1 = \int_{2B} |Ta(x)| w(x) dx
 \leq \left( \int_{2B} |Ta(x)|^{q/p_0} dx \right)^{p_0/q} 
\left( \int_{2B} w(x)^{(q/p_0)'} dx \right)^{1/(q/p_0)'} \\
 \leq  \| a \|_{L^{q/p_0}} \cdot |2B|^{1/(q/p_0)'} \left( \avgint_{2B} w(x)^{(q/p_0)'}
   dx \right)^{1/(q/p_0)'} 
\leq C(n,[w]_{A_1},[w]_{RH_{(q/p_0)'}}) \frac{w(B)}{\| \chi_B \|_{\pp}}. 
	\end{multline*}
To estimate $L_2$ we repeat the argument we used to estimate $I_2$ in
the proof of Theorem~\ref{thm:bdd-Lp}, replacing the exponent $p_0$ by
$1$.  Then using the pointwise estimate for $Ta$ and the decomposition
argument, we have that 
	\begin{multline*}
	L_2 	\leq C \frac{|B|^{\frac{n + d + 1}{n}}}{\| \chi_B \|_{\pp}} \left( \int_{(2B)^c} \frac{w(x)}{|x - x_0|^{n + d + 1}} dx \right)  \\
		\leq C \frac{|B|^{\frac{n + d + 1}{n}}}{\| \chi_B \|_{\pp}} \cdot \frac{w(B) [w]_{A_1}}{|B|^{\frac{n + d + 1}{n}}} \cdot \left( \sum_{i = 0}^{\infty} \frac{2^{ni}}{2^{i(n + d + 1)}} \right) \leq C \frac{w(B)}{\| \chi_B \|_{\pp}}. 
	\end{multline*}

To estimate $J_2$, we will prove a pointwise bound for  $\Mr (Ta_j) (x)$ for
$x \in (2B_j)^c$ similar to~\eqref{Ta-pointwise}.  Define $K^{(t)} = K
\ast \Phi_t$;  then
$K^{(t)}$ satisfies condition (3) of Definition~\ref{eqn:kregular}
uniformly for all $t>0$.   Moreover, for $x\in (2B)^c$, the integral
for $K*a(x)$ converges absolutely, so
$|\Phi_t*(K*a)(x)|=|K^{(t)}*a(x)|$.  

Again let 
$d = \lfloor n(\frac{1}{p_0} - 1) \rfloor$ and fix $t>0$.  If $P_d$ is
the Taylor polynomial of $K^{(t)}$ centered at $x - x_0$, we can argue
exactly as we did to prove~\eqref{Ta-pointwise} to get
\begin{align*}
|K^{(t)} \ast a (x)|
 & = \left|\int [K^{(t)}(x - y) - P_d (y) ] a (y)\, dy\right| \\
 & \leq \frac{C}{ |x - x_0|^{n+ d + 1}} \int_{B(x_0,r)} |y-x_0|^{d + 1}|a(y)| dy \\
& \leq C\frac{|B|^{\frac{n+d+1}{n}}|B|^{-p_0/q}}{|x - x_0|^{n+ d + 1}}
\|a\|_{L^{q/p_0}} \\
& \leq  C\frac{|B|^{1+\frac{d + 1}{n}}}{\| \chi_{B} \|_\pp}\frac{1}{|x - x_0|^{n + d + 1}}.
\end{align*}
The final constant is independent of $t$, an so we can take the
supremum over all $t$ to
\[ \Mr (Ta)(x) \leq C\frac{ |B|^{1+\frac{d + 1}{n}}}{\| \chi_{B} \|_{\pp}}\frac{1}{|x - x_0|^{n + d + 1}}. \]
Then arguing as we did before, by \eqref{Est:J} we have that $J_2 \leq w(B)/ \| \chi_B
\|_{\pp}^{p_0}$. This completes the proof.
\end{proof}

\bibliographystyle{plain}
\bibliography{hardyvar}

\end{document}